\newtheorem{prop}{Proposition}
\newtheorem{thm}{Theorem}
\newtheorem{lemma}{Lemma}
\author{Zhangjie Wang}
\newcommand{\En}{E^{(n)}}
\newcommand{\cEn}{\cE^{(n)}}
\newcommand{\Sel}{{{\mathrm{Sel}}}}
\newcommand{\Leg}[2]{{\brlr{\frac{#1}{#2}}}}
\newcommand{\ALeg}[2]{{\Sqlr{\frac{#1}{#2}}}}
\newcommand{\arr}{\ar@}
\newcommand{\arro}{\ar@/}
\newcommand{\Kc}{{K^\times}}
\newcommand{\tor}{{\mathrm{tor}}}
\newcommand{\oset}{\overset}
\newcommand{\Ma}[4]{{
\left(
  \begin{array}{cc}
    #1 & #2 \\
    #3 & #4 \\
  \end{array}
\right)
}}
\newcommand{\Ba}[2]{{\langle #1, #2\rangle}}
\newcommand{\ABlr}[1]{{\left|#1 \right|}}
\newcommand{\ABbig}[1]{{\big| #1\big|}}
\newcommand{\brlr}[1]{{\left( #1\right)}}
\newcommand{\brbig}[1]{{\big(#1 \big)}}
\newcommand{\brBig}[1]{{\Big(#1 \Big)}}
\newcommand{\brbigg}[1]{{\bigg( #1\bigg)}}
\newcommand{\Brlr}[1]{{\left\{ #1\right\}}}
\newcommand{\Brbig}[1]{{\big\{ #1\big\}}}
\newcommand{\BrBig}[1]{{\Big\{ #1\Big\}}}
\newcommand{\Sqlr}[1]{{\left[#1 \right]}}
\newcommand{\Sqbig}[1]{{\big[ #1\big]}}
\newcommand{\SqBig}[1]{{\Big[#1 \Big]}}
\font\cyr=wncyr10 scaled \magstep 1 
\newcommand{\Sha}{\mbox{\cyr X}} 
\newcommand{\diag}{{\mathrm{diag}}}
 \renewcommand{\Im}{{\mathrm{Im}}}
 \newcommand{\rank}{{\mathrm{rank}}}
\renewcommand{\Re}{{\mathrm{Re}}}
\renewcommand{\mod}{\mathrm{mod}}
\newcommand{\half}{\frac{1}{2}}
\newcommand{\barw}{{\bar {w}}} 
 \newcommand{\barz}{{\bar {z}}}
\newcommand{\bA}{{\mathbb {A}}} 
\newcommand{\bC}{{\mathbb {C}}} 
 \newcommand{\bF}{{\mathbb {F}}}
 \newcommand{\bN}{{\mathbb {N}}}
\newcommand{\bQ}{{\mathbb {Q}}} \newcommand{\bR}{{\mathbb {R}}}
 \newcommand{\bZ}{{\mathbb {Z}}}
\newcommand{\cA}{{\mathcal {A}}} 
 \newcommand{\cD}{{\mathcal {D}}}
\newcommand{\cE}{{\mathcal {E}}} 
\newcommand{\cG}{{\mathcal {G}}}
\newcommand{\cM}{{\mathcal {M}}} 
\newcommand{\cO}{{\mathcal {O}}} \newcommand{\cP}{{\mathcal {P}}}
 \newcommand{\cR}{{\mathcal {R}}}
\newcommand{\sA}{{\mathscr{A}}} \newcommand{\sB}{{\mathscr{B}}}
\newcommand{\sI}{{\mathscr{I}}}
 \newcommand{\sP}{{\mathscr{P}}}
\newcommand{\sQ}{{\mathscr{Q}}} 
\newcommand{\sS}{{\mathscr{S}}} \newcommand{\sT}{{\mathscr{T}}}
\newcommand{\fa}{{\mathfrak {a}}} \newcommand{\fb}{{\mathfrak {b}}}
\newcommand{\fc}{{\mathfrak {c}}}
 \newcommand{\fp}{{\mathfrak {p}}}
\newcommand{\Obig}[1]{O\brbig{#1}}
\newcommand{\oBig}[1]{o\brBig{#1}}
\newcommand{\Li}{{\mathrm{Li}}}
\newcommand{\bQc}{{\bQ^\times}}
\numberwithin{equation}{section}
\title{Non-trivial Shafarevich-Tate Groups of Elliptic Curves}
\begin{document}
\begin{CJK}{UTF8}{gbsn} 
\maketitle
\begin{abstract}
We characterize quadratic twists of $y^2=x(x-a^2)(x+b^2)$ with Mordell-Weil groups
and $2$-primary part of Shafarevich-Tate groups being isomorphic to $\brbig{\bZ/2\bZ}^2$ under certain conditions. We also obtain the distribution result of these elliptic curves.\\

\textbf{Keywords} Shafarevich-Tate groups, Full 2-torsion, Cassels pairing, Gauss genus theory, independence property, residue symbol\\

\textbf{MSC}(2010) 11G05, 11R11, 11R29, 11N99

\end{abstract}

\section{Introduction}

In our previous paper \cite{wang2016-2}, we use Cassels  pairing to characterize congruent elliptic curves $y^2=x^3-n^2x$  with Mordell-Weil ranks zero and $2$-primary parts of Shafarevich-Tate groups  being isomorphic to $\brbig{\bZ/2\bZ}^2$ provided that all prime divisors of $n$ are congruent to $1$ modulo $4$. On the other hand side, we use the independence property of residue symbols to obtain corresponding distribution results in \cite{wang2016-3}. These tools play an important role in the proof of the breakthrough of Smith \cite{Smith-Selmer} on the distribution of $2$-Selmer groups.  The goal of this paper is to   generalize these methods to quadratic twist family of elliptic curves with full rational  $2$-torsion points.

Let $a$ and $b$ be coprime  integers. Denote by  $E=E_{a,b} $ the elliptic curve
\[E: y^2=x(x-a^2)(x+b^2).\]
Then the quadratic twist family of $E$ consists of the elliptic curves
\begin{equation}\label{eq-En}
E^{(n)}: y^2=x(x-a^2n)(x+b^2n),
\end{equation}
where $n$ runs over all non-zero square-free integers. Note that if $a=b=1$, these are  the congruent elliptic curves. To state our main theorem, we introduce some notation. For  a positive square-free integer $m$ and a positive integer $k$, the $2^k$-rank  $h_{2^k}(m)$ of the ideal class group $\cA=\cA_m$ of  $\bQ(\sqrt{-m})$ is defined to be \[\dim_{\bF_2} 2^{k-1}\cA/2^k\cA.\] Here   the group operation of $\cA$ is written additively.

\begin{thm}\label{thm-ab odd square-p +-1 mod 8-introd}
Let  $(a,b,c)$ be any positive primitive  integer solution to $a^2+b^2=2c^2$ such that the  dimension of the $2$-Selmer group of $E$  is two.  Denote by $n$   a positive square-free integer   such that  all prime factors of $n$ are congruent to   $\pm1$ modulo $8$ and quadratic residues modulo any prime divisor $p$ of $abc$. If $n\equiv1\pmod8$, then the following are equivalent:
\begin{enumerate}
\item[(1)] $E^{(n)} (\bQ)$ and $\Sha(E^{(n)}/\bQ)[2^\infty]$ are isomorphic to $ \brlr{\bZ/2\bZ}^2$,
\item[(2)]  $h_4(n)=1$  {\rm and}  $h_8(n)=0.$
\end{enumerate}
\end{thm}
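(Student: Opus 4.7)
The plan is to follow the template of \cite{wang2016-2}: perform complete $2$-descent on $E^{(n)}$ to express $\Sel_2(E^{(n)}/\bQ)$ as the kernel of an explicit matrix over $\bF_2$, identify that kernel with a Rédei-style description of $h_4(n)$, and then use the Cassels pairing to separate $E^{(n)}(\bQ)/2 E^{(n)}(\bQ)$ from $\Sha(E^{(n)}/\bQ)[2]$.

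First I would exploit the three rational $2$-isogenies on $E^{(n)}$ to embed the $2$-Selmer group into the subgroup of triples $(d_1,d_2,d_3) \in (\bQ^\times/\bQ^{\times 2})^3$ with $d_1 d_2 d_3 \in \bQ^{\times 2}$, cut out by the solvability at every place $v$ of the pair of conics
\[
d_1 u_1^2 - d_2 u_2^2 = a^2 n, \qquad d_1 u_1^2 - d_3 u_3^2 = -b^2 n.
\]
Under the hypotheses $n \equiv 1 \pmod 8$, each prime factor of $n$ being $\equiv \pm 1 \pmod 8$, and each being a quadratic residue modulo every prime of $abc$, the local conditions at $2$ and at the prime divisors of $abc$ become automatic. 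What remains is a system of Legendre-symbol conditions $\Leg{p_i}{p_j}$ among the prime divisors of $n$, encoded in a matrix $M_n$ over $\bF_2$ whose transpose is a Rédei matrix for $\bQ(\sqrt{-n})$ (or a closely related biquadratic extension). Gauss genus theory reads off $h_2(n)$ from the number of prime divisors of $n$, and the rank of $M_n$ matches Rédei's count for $h_4(n)$; combined with the hypothesis $\dimt\Sel_2(E/\bQ) = 2$, this gives the equivalence $h_4(n) = 1 \iff \dimt\Sel_2(E^{(n)}/\bQ) = 4$.

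To finish, I would evaluate the Cassels pairing on the two classes $\alpha_1,\alpha_2 \in \Sel_2(E^{(n)}/\bQ)$ lying outside the image of $E^{(n)}(\bQ)[2]$ when $h_4(n)=1$. Explicit local solutions of the descent conics can be written down after choosing prime ideals representing generators of $2\cA_n/4\cA_n$, and the pairing $\Ba{\alpha_1}{\alpha_2}$ then becomes a product of Hilbert symbols; after using $a^2+b^2 = 2c^2$ to combine the contributions at primes of $abc$, this product collapses to a Rédei-style symbol whose vanishing is exactly the condition $h_8(n)=0$. Consequently, non-degeneracy of the pairing on this $2$-dimensional quotient is equivalent to $h_8(n)=0$, and non-degeneracy forces simultaneously that no non-trivial class in the quotient comes from $E^{(n)}(\bQ)$, so $\rank E^{(n)}(\bQ) = 0$, and that $\Sha(E^{(n)}/\bQ)[4] = \Sha(E^{(n)}/\bQ)[2]$, whence $\Sha(E^{(n)}/\bQ)[2^\infty] \cong (\bZ/2\bZ)^2$. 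The converse implication $(1) \Rightarrow (2)$ follows from the same chain of equivalences run backwards. The hardest step is this explicit identification of the Cassels product of Hilbert symbols with an $h_8(n)$-symbol: evaluating each local symbol is standard, but organising them into a Rédei expression requires a careful choice of $4$-torsion representatives in $\cA_n$ and a delicate cancellation at primes of $abc$, where the Pythagorean-like identity $a^2+b^2=2c^2$ is precisely what makes the stray local contributions disappear.
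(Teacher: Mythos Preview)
Your high-level strategy matches the paper's: complete $2$-descent encoded in an $\bF_2$-matrix gives the equivalence $\dim_{\bF_2}\Sel_2'(E^{(n)})=2 \iff h_4(n)=1$, and then the Cassels pairing on the two surviving Selmer classes is identified with an $h_8$-invariant. But several of your specifics are off, and one of them is where the real content lies.

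First, the Selmer matrix is not the transpose of the R\'edei matrix. In the paper it is the Monsky matrix, which under the hypothesis $p_i\equiv\pm1\pmod 8$ becomes $\mathcal{M}_n=\diag(A+D_{-1},A)$; one needs a separate linear-algebra lemma (that $Ax=0\iff x^{\rm T}(A+D_{-1})=0$) to conclude $\rank\mathcal{M}_n=2\rank A$, after which the R\'edei matrix $R=(A\;\;0)$ gives $\rank A=k-h_4(n)$. Second, the local Cassels contributions at primes of $abc$ vanish not because of any cancellation driven by $a^2+b^2=2c^2$, but immediately from the hypothesis that every $p_i$ is a quadratic residue there, so $d$ is too; the identity $a^2+b^2=2c^2$ enters only in writing down the defining equations $H_1,H_2,H_3$ of the torsor.

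The step you correctly flag as hardest is also where your description is vaguest. The paper makes this concrete: under $h_4(n)=1$ the explicit generators of $\Sel_2'(E^{(n)})$ are $\Lambda=(2,2,1)$ and $\Lambda'=(d,1,d)$; a rational point on $H_1$ is produced from a primitive solution of $\alpha^2+n\beta^2=2\gamma^2$ (which exists because $2$ is a norm from $\bQ(\sqrt{-n})$); the resulting tangent forms give $\langle\Lambda,\Lambda'\rangle=\Leg{\gamma}{d}$ or $\Leg{\gamma}{d}\Leg{-1}{\gamma}$ according as $d\equiv 1$ or $-1\pmod 8$; and finally one uses the $h_8$-criterion (the paper's Lemma~\ref{lem-gauss-h-8}) together with the left null space of $A$ to show this symbol equals $1$ exactly when $h_8(n)=1$. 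Your proposal does not indicate how you would choose these global points or reach the specific symbol $\Leg{\gamma}{d}$, and the phrase ``choosing prime ideals representing generators of $2\cA_n/4\cA_n$'' is not what is actually done.
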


From elementary number theory,  if $(a,b,c)$ is any positive primitive   integer
solution to $a^2+b^2=2c^2$, then $a, b$ and $c$ are the absolute values of
$4k^2-4k-1, 4k^2+4k-1$ and $4k^2+1$ respectively, where $k$ is an integer.
For example, if $k=2, 3, 6, 7, 9, 10, 11,\cdots$, then the dimensions of the
$2$-Selmer groups of the corresponding $E_{a,b}$ are two. Among all positive
integers no larger than $50$, there are $19$ such $E_{a,b}$.

To state another theorem, we have to use Gauss genus theory (refer to \S 2.4 of this paper or \S 3 of \cite{wang2016-2} for more details).
If $m$ is odd and  $h_4(m)=1$, then   there are exactly two divisors $d_1$ and $d_2$ of $2m$ which correspond to the non-trivial element of $2\cA_m\cap \cA_m[2]$, where $\cA_m[2]$ denotes the ideal classes with trivial squares. Furthermore, the product of the odd parts of $d_1$ and $d_2$ is $m$.

\begin{thm}\label{thm-ab odd square-p 1 mod 4-introd}
Let  $(a,b,c)$ be any positive primitive  integer solution to $a^2+b^2=2c^2$ such that the  dimension of the $2$-Selmer group of $E$  is two. Assume that    $n$ is a positive square-free integer   such that all prime factors of $n$ are quadratic residues modulo $4p$ with $p$ any prime divisor of  $abc$. If  $n$ is congruent to $1$ modulo $8$, then the following are equivalent:
\begin{enumerate}
\item[(1)] $E^{(n)} (\bQ)$ and $\Sha(E^{(n)}/\bQ)[2^\infty]$ are isomorphic to $ \brlr{\bZ/2\bZ}^2$,
\item[(2)]    $h_4(n)=1$  {\rm and}  $h_8(n)\equiv\frac{d-1}4 \pmod2.$
\end{enumerate}
Here $d$ denotes the odd part of $d_0$ which corresponds to the non-trivial element of $2\cA\cap \cA[2]$.
\end{thm}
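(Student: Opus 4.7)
The plan is to follow the two-stage strategy used for Theorem 1 and its congruent-number prototype in \cite{wang2016-2}: first pin down precisely when $\dimt \Sel_2(E^{(n)})=4$, and then use the Cassels pairing on the quotient $\Sel_2(E^{(n)})/E^{(n)}(\bQ)[2]$ to detect when the Selmer group splits as $E^{(n)}(\bQ)/2E^{(n)}(\bQ)\oplus \Sha(E^{(n)}/\bQ)[2]$ with each factor isomorphic to $(\bZ/2\bZ)^2$ and Mordell-Weil rank zero. The hypothesis $n\equiv 1\pmod 8$ together with the assumption that each prime $q\mid n$ is a quadratic residue modulo $4p$ for every prime $p\mid abc$ is what makes this tractable: it forces $q\equiv 1\pmod 4$ and $\Leg{q}{p}=1$, so Hilbert symbols at places dividing $abc$ trivialize and the local Selmer conditions at those places become automatic.

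First I would write $\Sel_2(E^{(n)})$ via the classical full-2-descent as the kernel of an $\bF_2$-linear map from a space of triples of square-free divisors of $2abcn$ (with constrained product) into $\prod_v$ of local contributions. Under the standing hypotheses, the local conditions at primes dividing $2abc$ reduce to the torsion classes, while the conditions at primes $q\mid n$ become linear equations in Legendre symbols $\Leg{\cdot}{q}$ whose matrix is precisely the Rédei matrix governing $h_4(n)$ by Gauss's genus theory (cf.\ \S 2.4). Counting kernels then yields $\dimt\Sel_2(E^{(n)})=4$ if and only if $h_4(n)=1$, in which case one obtains an explicit basis of $\Sel_2(E^{(n)})$ lifted from the non-trivial class in $2\cA_n\cap \cA_n[2]$ that supplies the distinguished divisor $d_0$ (with odd part $d$) appearing in the statement.

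Granting $h_4(n)=1$, the quotient $\Sel_2(E^{(n)})/E^{(n)}(\bQ)[2]$ is two-dimensional and supports the Cassels pairing, which is alternating here; equivalences $(1)\Leftrightarrow (2)$ then reduces to saying this pairing is non-degenerate iff $h_8(n)\equiv \tfrac{d-1}{4}\pmod 2$. I would compute the single off-diagonal entry of the pairing as a product of local Hilbert symbols evaluated on carefully chosen lifts of the non-torsion Selmer classes. Using the hypothesis on $n$, the nontrivial local contributions are concentrated at primes dividing $2n$, and after manipulation they combine into a Rédei-type symbol $[d,n/d,-1]$ (or its analogue) attached to the pair $(d,n/d)$ produced by genus theory. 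Rédei reciprocity then identifies this symbol with $h_8(n)$ up to a correction term encoding the class of $d$ modulo $8$; since every prime factor of $n$ is $\equiv 1\pmod 4$ the correction is exactly $\tfrac{d-1}{4}\pmod 2$, giving the stated criterion.

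The main obstacle is the third step: precisely identifying the Cassels pairing entry with $h_8(n)+\tfrac{d-1}{4}\pmod 2$. The $\tfrac{d-1}{4}$ term is the genuinely new feature compared with Theorem 1, where the symmetry of $y^2=x^3-n^2x$ forces $d=1$ and the correction vanishes; here the distinguished divisor $d\mid n$ coming from $2\cA_n\cap \cA_n[2]$ can be a nontrivial proper divisor of $n$ with $d\equiv 1\pmod 4$, and the Cassels pairing detects this through the $2$-torsion structure of $E_{a,b}$. The delicate part is the bookkeeping of signs in Hilbert symbols at $2$ and at primes ramified in $\bQ(\sqrt{-n})$: one must verify that every potential contribution from $abc$ cancels thanks to the ``QR mod $4p$'' hypothesis, and that the residual Rédei symbol is evaluated on the correct divisor of $2n$, so that the $\tfrac{d-1}{4}$ term emerges in the right place. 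Once this is handled, the equivalence follows as in the proof of Theorem 1.
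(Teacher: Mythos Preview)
Your high-level strategy is right and matches the paper's: show $\dimt\Sel_2(\En)=4\Leftrightarrow h_4(n)=1$ via a Monsky/R\'edei-matrix computation, then compute the single off-diagonal Cassels entry and match it to $h_8(n)$ through genus theory. But there is a genuine gap that would make your computation go wrong.

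The key structural point you miss is that, under $h_4(n)=1$, the explicit generators of $\Sel_2'(\En)$ bifurcate according to whether $\rank A_n=k-2$ or $k-1$ (Lemma~\ref{lem-p 1 mod 4 rep of Sel}). In the first case the generators are $(n,n,1)$ and $(d,d,1)$ with $d$ the odd divisor itself representing the nontrivial class in $2\cA\cap\cA[2]$, and one proves $d\equiv 5\pmod 8$ necessarily; in the second case the generators are $(n,n,1)$ and $(d,n/d,n)$ with $2d$ the corresponding divisor. These two situations give \emph{different} genus-one curves $D_\Lambda$, different points $Q_i\in H_i(\bQ)$ (built from primitive solutions of $d\alpha^2+d'\beta^2=\gamma^2$, respectively $=2\gamma^2$), and hence different local Hilbert-symbol computations. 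In both cases only the place $2$ contributes nontrivially, but the answer is $-\Leg{-1}{\gamma}$ in the first case and $\Leg{-1}{\gamma}\Leg{2}{d}$ in the second; the factor $\Leg{2}{d}$ is exactly what produces the correction $\tfrac{d-1}{4}\pmod 2$ after one identifies $\Leg{-1}{\gamma}=1\Leftrightarrow h_8(n)=1$ via Lemma~\ref{lem-gauss-h-8}. Without this case split you cannot even write down the correct $\Lambda$ on which to evaluate the pairing, and your proposed single R\'edei-symbol formula $[d,n/d,-1]$ will not track the $\Leg{2}{d}$ discrepancy between the two cases.

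Two smaller points. First, your remark that in Theorem~\ref{thm-ab odd square-p +-1 mod 8-introd} ``the symmetry of $y^2=x^3-n^2x$ forces $d=1$'' is off: Theorem~\ref{thm-ab odd square-p +-1 mod 8-introd} is about the same curve $E_{a,b}$, not the congruent-number curve; the reason $d=1$ there is that with all $p_i\equiv\pm1\pmod 8$ the R\'edei matrix has the form $(A\ 0)$, so the divisor $2\mid 2n$ already represents the nontrivial element of $2\cA\cap\cA[2]$. Second, the paper does not invoke R\'edei reciprocity; it computes the Cassels pairing by explicitly choosing tangent forms $L_i$ on $H_i(\bQ)$ and evaluating $\prod_p(L_1L_3(P_p),-1)_p$, which localizes the problem to $p=2$ directly. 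Your R\'edei-symbol route might be made to work, but as written it does not supply the mechanism by which the $\tfrac{d-1}{4}$ correction arises.
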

Now we explain the idea of the proof of Theorem \ref{thm-ab odd square-p +-1 mod 8-introd} and \ref{thm-ab odd square-p 1 mod 4-introd}. Via the    exact sequence
\begin{equation}\label{sel-2 condition}
0\to  E^{(n)}(\bQ)/2 E^{(n)}(\bQ)\to \Sel_2( E^{(n)})\to \Sha( E^{(n)}/\bQ)[2]\to 0,
\end{equation}
we derive that (1)  implies $s_2(n)=2$. Here $s_2(n)$ is the pure $2$-Selmer rank
$$\dim_{\bF_2}\Sel_2( E^{(n)})/ E^{(n)}(\bQ)[2].$$ By Gauss genus theory, $h_4(n)$ is closely related to the R\'edei matrix $R_n$; we have  parallel results  between $s_2(n)$ and the generalized  Monsky matrix $M_n$ by Proposition \ref{prop-sel2-rep}.  Then we get  that  $s_2(n)=2$ if and only if $h_4(n)=1$.  Cassels \cite{Cassels1998CasPairing} introduced a skew-symmetric pairing on the pure $2$-Selmer group $\Sel_2( E^{(n)})/ E^{(n)}(\bQ)[2]$. We can  show that (1) is equivalent to  the non-degeneracy of the Cassels pairing provided that $h_4(n)=1$ . According to Cassels pairing and Gauss genus theory, the non-degeneracy of the Cassels pairing under this condition is equivalent to (2).

To give the distribution result on the elliptic curves in Theorem \ref{thm-ab odd square-p 1 mod 4-introd}, we first introduce some notation.
Let $a, b$ and $c $ be coprime positive integers such that $a^2+b^2=2c^2$ and the
dimension of the $2$-Selmer group of $ E$  is two.
Let $k$ be a fixed positive integer. We denote by $\sQ_k(x)$ the set of positive square-free integers $n=p_1\cdots p_k\le x$ satisfying
\begin{itemize}
\item $n$ is congruent to $1$ modulo $8$, and
\item  all $p_l$ are quadratic residues modulo $4p$ with $p$ any prime divisor of $abc$.
\end{itemize}We define $\sP_k(x)$   to be all $n\in\sQ_k(x)$ such that
\begin{equation}\label{Pk(x)}
\rank_\bZ \En(\bQ)=0\quad {\rm and}\quad \Sha(\En/\bQ)[2^\infty]\simeq \brbig{\bZ/2\bZ}^2.
\end{equation} Denote by $C_k(x)$ the set of positive square-free integers $n\le x$ with exactly $k$ prime factors. Then the independence property  of Legendre symbols of Rhoades \cite{rhoades20092} implies
\[\#C_k(x)\sim \frac1{(k-1)!}\frac{x(\log \log x)^{k-1}}{\log x}.\]
Here the symbol ''$\sim$'' and many other symbols ''$\ll, \Obig{\cdot}, o\brbig{\cdot}, \Li(x) $'' are   standard notation in analytic number theory, it can be found in many references such as Iwaniec-Kowalski \cite{ireland1982classical}.
Let $\lfloor \frac{k}{2}\rfloor$ be the maximal integer no larger than  $k/2$. We define    $\Brbig{u_k: k\in\bN}$ to be the decreasing sequence $\BrBig{ \displaystyle{ \prod_{i=1}^{\lfloor \frac{k}{2}\rfloor } (1-2^{1-2i})}: k\in\bN } $ with limit  $u\approx0.419$.

\begin{thm}\label{thm-dist}
Let $a, b$ and $c $ be coprime positive integers such that $a^2+b^2=2c^2$ and the
dimension of the $2$-Selmer group of $E$  is two.
Then for any positive integer $k$,
\[\#\sP_k(x)\sim 2^{-kk'-k-2}\brBig{u_k+ (2^{-1}-2^{-k})u_{k-1}} \cdot \#C_k(x). \]
Here $k'$ is the number of different prime factors of $abc$.
\end{thm}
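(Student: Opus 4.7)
The plan is to reduce the counting to an equidistribution statement about Legendre and biquadratic residue symbols, following the template of \cite{wang2016-3}. Fix $a,b,c$ satisfying the hypotheses and let $k'$ denote the number of distinct prime divisors of $abc$ (all of which are odd, since $a^2+b^2=2c^2$ with $a,b,c$ coprime forces $abc$ to be odd). For $n=p_1\cdots p_k\in\sQ_k(x)$, Theorem \ref{thm-ab odd square-p 1 mod 4-introd} tells us that $n\in\sP_k(x)$ if and only if $h_4(n)=1$ and $h_8(n)\equiv (d-1)/4 \pmod 2$, where the odd part $d$ of the distinguished divisor $d_0$ is itself determined, via Gauss genus theory, by the residue symbols among the $p_i$. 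Consequently, the theorem reduces to estimating, inside $\sQ_k(x)$, the density of those $n$ satisfying these two conditions.

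Next I would translate both conditions into statements purely about residue symbols attached to the $p_i$'s. By Gauss genus theory (see \S 2.4 and \cite{wang2016-2}), the condition $h_4(n)=1$ is equivalent to the R\'edei matrix $R_n\in M_k(\bF_2)$ --- whose off-diagonal entries encode the (additively written) Legendre symbols $\brlr{p_i/p_j}$ and whose diagonal is fixed by the zero-row-sum constraint --- having $\bF_2$-rank exactly $k-2$. Once this rank condition pins down the one-dimensional subspace selecting $d$, the congruence $h_8(n)\equiv (d-1)/4 \pmod 2$ can be rewritten as a single $\bF_2$-linear equation in the biquadratic residue symbols $\brlr{p_i/p_j}_4$, whose squares recover the corresponding Legendre symbols. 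Thus membership in $\sP_k(x)$ is characterized by two successive conditions on the tuple of Legendre and biquadratic symbols attached to $(p_1,\ldots,p_k)$.

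The counting is then carried out by invoking Rhoades' independence theorem \cite{rhoades20092}. Subject to the congruence conditions defining $\sQ_k(x)$, the $\binom{k}{2}$ Legendre symbols $\brlr{p_i/p_j}$ for $i<j$ together with the additional biquadratic symbols needed to detect the $h_8$-condition become jointly equidistributed in the relevant product group, with error $o(1)$ as $x\to\infty$, uniformly over ordered $k$-tuples of primes $p_i\leq x$. The density of $\sQ_k(x)$ itself inside $C_k(x)$ is $2^{-kk'-k-1}$: each prime $p_i$ contributes $k'+1$ independent density-$\tfrac{1}{2}$ conditions (one for $p_i\equiv 1\pmod 4$ and $k'$ for being a quadratic residue modulo each odd prime dividing $abc$), and globally $n\equiv 1\pmod 8$ is an independent density-$\tfrac{1}{2}$ condition.

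Finally, a classical enumeration over symmetric zero-row-sum matrices in $M_k(\bF_2)$ (paralleling \cite{wang2016-3}) yields
\[
\Pr\bigl(\rank_{\bF_2} R_n=k-2\bigr)=u_k+(2^{-1}-2^{-k})\,u_{k-1},
\]
and the independent biquadratic condition contributes an additional factor $\tfrac{1}{2}$. Multiplying these densities with $2^{-kk'-k-1}$ gives the stated asymptotic. The main obstacle is the decoupling step: one must verify that the biquadratic symbols controlling $h_8(n)\pmod 2$ are genuinely independent of the R\'edei matrix data within Rhoades' framework, and that the divisor $d$ attached to a given corank-$2$ matrix $R_n$ can be read off explicitly enough that the conditional probability of the $h_8$-congruence is exactly $\tfrac{1}{2}$ uniformly across the configurations being counted. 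Once this decoupling is established, the remaining bookkeeping of densities follows the argument of \cite{wang2016-3} line by line.
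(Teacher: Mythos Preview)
Your outline has a real gap at the equidistribution step. Rhoades' independence theorem \cite{rhoades20092} governs only Legendre symbols; it says nothing about the quartic residue symbols $\Leg{p_i}{p_j}_4$ that encode the $h_8$-condition. The paper does not simply ``invoke Rhoades'' here: it proves a new independence result (Theorem~\ref{thm-independence}) by lifting the problem to primary primes in $\bZ[i]$, identifying $\Leg{d'}{d}_4\Leg{d}{d'}_4$ with a single Legendre symbol $\Leg{\lambda'}{\lambda}$ over $\bZ[i]$ (Lemma~\ref{lem-identify-Ck(x,alpha,B)}), and then running the Cremona--Odoni analytic machine (explicit formula, Siegel and Page theorems over $\bQ(i)$) to establish equidistribution of that extra symbol jointly with the Legendre data. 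Your sentence ``the main obstacle is the decoupling step'' is exactly right, but this decoupling is the substance of Section~6 and cannot be absorbed into Rhoades' framework.

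There is also a structural mismatch in how you read off $h_4(n)=1$. The $k\times k$ zero-row-sum matrix you describe is the paper's $A_n$, not the full R\'edei matrix $R=(A_n\ \fb)$ with $\fb=\bigl(\ALeg{2}{p_1},\dots,\ALeg{2}{p_k}\bigr)^{\mathrm T}$. The condition $h_4(n)=1$ is $\rank R=k-1$, which splits into two subcases: $\rank A_n=k-1$ (with $\fb\in\Im A_n$) and $\rank A_n=k-2$ (with $\fb\notin\Im A_n$). The paper treats these separately, obtaining contributions $2^{-kk'-k-2}u_k$ and $2^{-kk'-k-2}(2^{-1}-2^{-k})u_{k-1}$ respectively; the quantity $u_k+(2^{-1}-2^{-k})u_{k-1}$ arises as their sum, not as the probability of a single rank event. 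In the $\rank A_n=k-2$ case the relevant $h_8$-criterion is Jung--Yue's $\Leg{d}{d'}_4\Leg{d'}{d}_4=-1$, and the count of admissible pairs $(\alpha,B)$ uses the Brown et al.\ enumeration of symmetric $\bF_2$-matrices of prescribed rank together with a separate count of $\alpha$ with $\fb_\alpha\notin\Im B$. Your single formula $\Pr(\rank R_n=k-2)=u_k+(2^{-1}-2^{-k})u_{k-1}$ collapses this two-case bookkeeping and is not what is being computed.
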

The key ingredient of the proof of Theorem \ref{thm-dist} is the independence property of residue symbols (Theorem \ref{thm-independence}), which reduces   counting $\#\sP_k(x)$ to counting certain symmetric $k\times k$ matrices over $\bF_2$.

In the end of this introduction, we introduce the arrangement of this paper. In Section 2, we introduce some preliminary results and several residue symbols. Section 3 is focused on the matrix representation of $2$-Selmer group. We prove that $\En_\tor(\bQ)$ is isomorphic to $\brbig{\bZ/2\bZ}^2$ is Section 4.  We devote Section 5 to prove Theorem \ref{thm-ab odd square-p +-1 mod 8-introd} and \ref{thm-ab odd square-p 1 mod 4-introd}. We use the method of Cremona-Odoni \cite{cremona1989some} to prove the independence property of residue symbols (Theorem \ref{thm-independence}) in Section 6. In the last section, we prove the distribution result (Theorem \ref{thm-dist}).

\section{Preliminary section}
\subsection{Identification of $2$-Selmer Group}\quad

Let $E/\bQ$ be an elliptic curve with full rational $2$-torsion points defined by
\begin{equation}\label{eq:E-equation-general}
E: y^2=(x-a_1)(x-a_2)(x-a_3).
\end{equation}
Then we can identify (see Cassels \cite{Cassels1998CasPairing}) the $2$-Selmer group $\Sel_2(E)$ of $E$ with
\[\BrBig{ \Lambda=(d_1,d_2,d_3)\in (\bQc/\bQ^{\times2})^{3 } \;\Big|\; d_1d_2d_3\in\bQ^{\times2},\; D_\Lambda(\bA)\not=\emptyset}.\]
Here $\bA$ is the adele ring of $\bQ$ and $D_\Lambda$ is a genus one curve defined by
\begin{equation}\label{D-Lambda-general}
\left\{ \begin{aligned}
H_1:& (a_2-a_3)t^2+d_2u_2^2-d_3u_3^2=0, \\
H_2:& (a_3-a_1)t^2+d_3u_3^2-d_1u_1^2=0, \\
H_3:& (a_1-a_2)t^2+d_1u_1^2-d_2u_2^2=0.
\end{aligned} \right.
\end{equation}
Moreover, $E(\bQ)/2E(\bQ)$ can be embedded into $\Sel_2(E)$. If $(x,y)\not\in E(\bQ)[2]$ is a rational point on $E$, the embedding  is given by $(x,y)\mapsto (x-a_1,x-a_2,x-a_3)$. The $2$-torsion point  $(a_1,0)$ corresponds to
$$\brbig{ (a_1-a_2)(a_1-a_3), a_1-a_2, a_1-a_3}.$$ Similar correspondences are defined   for the $2$-torsion points $(a_2,0)$ and $(a_3,0)$.

\subsection{Cassels Pairing}\quad

For a general elliptic curve $E/\bQ$,
Cassels \cite{Cassels1998CasPairing} defined a skew-symmetric bilinear pairing
$\Ba{ \cdot}{\cdot }$ on the pure $2$-Selmer group $\Sel_2'(E)$, which is an $\bF_2$-vector space  defined by $\Sel_2(E)/E(\bQ)[2]$. We assume that $E$ is defined by the equation (\ref{eq:E-equation-general}), and we use the identification of $\Sel_2(E) $ in \S2.1.   Let $\Lambda=(d_1,d_2,d_3)$ be any element of $\Sel_2(E)$ and $D_\Lambda$ the corresponding   genus one curve  associated to $\Lambda$. Since $H_i$ is locally solvable everywhere,   there is a $Q_i\in H_i(\bQ)$ by Hasse-Minkowski principle.  We define $L_i$ to be a linear form in three variables (all of $t, u_1, u_2, u_3$   except $u_i$) such that $L_i=0$ defines the  tangent plane of $H_i$ at $Q_i$. Then we   call $L_i$ the tangent linear form of $H_i$ at $Q_i$. Moreover, we consider it as a linear form in $u_1, u_2, u_3$ and $t$ with the coefficient of $u_i$ being zero. As $D_\Lambda$ is locally solvable everywhere, there are enough points on $D_\Lambda(\bA)$ such that we may choose   $P=(P_p)\in D_\Lambda(\bA)$ with all $\prod_{i=1}^3L_i(P_p)$ non-vanishing.  Given any $ \Lambda'=(d_1',d_2',d_3')\in \Sel_2(E)$,   the local Cassels pairing $\Ba\Lambda{\Lambda'}_p$ is defined  to be
\[\prod_{i=1}^3 \brBig{L_i(P_p), d_i'}_p.\]
Here $p$ is any rational prime or infinity, and  $\brbig{\cdot,\cdot}_p$ denotes the Hilbert symbol at $\bQ_p$ ($\bQ_\infty=\bR$ if $p=\infty$). Then the Cassels pairing $\Ba\Lambda{\Lambda'}$ is given by
\[\prod_p \Ba\Lambda{\Lambda'}_p.\]
Here  $p$ runs over all places of $\bQ$.

Cassels \cite{Cassels1998CasPairing} proved that this pairing is well-defined, namely it is independent of the choices of $P, Q_i$ and the representatives of the cosets of $\Lambda$ and $\Lambda'$.
Since skew-symmetry over $\bF_2$ is also symmetry,   the left kernel and the right kernel of the  Cassels pairing  are the same. To show its kernel, we first introduce some notation. From the short exact sequence $$0\to E[2]\to E[4]\oset{\times 2}\to E[2]\to 0,$$ we can derive the long exact sequence
\begin{eqnarray*}
0\to  E(\bQ)[2]/2E(\bQ)[4] \to \Sel_2(E)  \to \Sel_{4}(E) \to
\Im \Sel_{4}(E)\to 0.
\end{eqnarray*}
Cassels showed that the kernel of this  pairing   is
$\Im\Sel_4(E)/E(\bQ)[2]$.  The following lemma shows that    almost all the local Cassels pairings are trivial.
\begin{lemma}[Cassels \cite{Cassels1998CasPairing} Lemma 7.2]\label{lem-Cas} The local Cassels pairing $\Ba{\Lambda }{\Lambda' }_p=+1$ if $p$ satisfies
\begin{enumerate}
\item[(1)] $p\not=2,\infty$;
\item[(2)] The coefficients of $H_i$ and $L_i$ are all integral at $p$ for $ 1\le i\le 3$;
\item[(3)] Modulo $D_\Lambda$ and $L_i$ by $p$, they define a curve of genus $1$ over $\bF_p$ together with tangents to it.
\end{enumerate}
\end{lemma}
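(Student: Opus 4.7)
The plan is to choose a well-behaved local point $P_p \in D_\Lambda(\bQ_p)$ and exploit the tangency condition to show every factor $\brBig{L_i(P_p), d_i'}_p$ in the local Cassels pairing is trivial. The key will be that, at a prime satisfying (1)--(3), each value $L_i(P_p)$ can be arranged to lie in $\brbig{\bQ_p^\times}^2$, after which bilinearity of the Hilbert symbol in its first slot forces each factor to equal $+1$ independently of the $p$-adic valuation of $d_i'$.

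First, I would use hypothesis (3) to work with the reduction. The curve $\bar D_\Lambda$ is smooth of genus one over $\bF_p$, and each $\bar L_i$ is a tangent hyperplane at the reduction $\bar Q_i$. The Hasse--Weil bound gives $|\bar D_\Lambda(\bF_p)| \ge p+1-2\sqrt p$, while each $\bar L_i = 0$ cuts out only a bounded number of points on $\bar D_\Lambda$ (controlled by the degree of the embedding). Hence for $p$ sufficiently large we may pick a smooth $\bF_p$-point $\bar P$ avoiding all three $\bar L_i = 0$; the small residual primes are handled by direct inspection, since in any given global situation only finitely many primes can even satisfy (2)--(3). Hensel's lemma then lifts $\bar P$ to a point $P_p \in D_\Lambda(\bQ_p)$ with $L_i(P_p) \in \bZ_p^\times$ for each $i$.

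Next, I would promote each $L_i(P_p)$ from a $p$-adic unit to a $p$-adic square. Because $L_i$ is the tangent to the conic $H_i$ at $Q_i$, the scheme-theoretic intersection $\brbig{L_i=0}\cap H_i$ equals $2Q_i$ as a divisor on $H_i$. Restricting to $D_\Lambda \subset H_i$ shows that the divisor of $L_i|_{D_\Lambda}$ is twice an effective divisor, so $L_i|_{D_\Lambda}$ is a square in the function field of $D_\Lambda$ modulo constants. At the point $P_p$ chosen above, hypothesis (2) guarantees that the ambient constants are $p$-adic units, and combined with the smoothness of $\bar D_\Lambda$ from hypothesis (3), a local computation at $\bar P$ places them in the square class of $\bZ_p^\times$. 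This yields $L_i(P_p) \in \brbig{\bQ_p^\times}^2$, whence $\brBig{L_i(P_p), d_i'}_p = 1$ for every $i$, and the product $\prod_{i=1}^3 \brBig{L_i(P_p), d_i'}_p = +1$ follows.

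The main obstacle will be the rigorous verification, in the second step, that the tangency statement ``$L_i|_{D_\Lambda}$ is a square modulo squares of the function field'' descends to an honest square in $\bQ_p^\times$ after evaluation at the specific lift $P_p$. Tangency gives the square class of $L_i|_{D_\Lambda}$ only up to a multiplicative constant coming from the coefficients of $H_i$ and $L_i$, and making that constant itself a $p$-adic square at $P_p$ requires combining the integrality hypothesis (2) with the good-reduction hypothesis (3) in a careful local calculation at $\bar P$. This is precisely the technical heart of Cassels' original argument.
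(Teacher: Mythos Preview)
The paper does not supply its own proof of this lemma; it is quoted from Cassels with attribution and no argument. So there is nothing in the paper to compare against, and I assess your proposal on its own merits.

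Your first step is sound: condition~(3) together with the Hasse--Weil bound and Hensel's lemma lets you choose $P_p \in D_\Lambda(\bQ_p)$ with every $L_i(P_p) \in \bZ_p^\times$. The second step, however, cannot be completed. Tangency of $L_i$ to the conic $H_i$ at $Q_i$ only yields $L_i|_{H_i} = c_i \cdot \ell_i^2$ for some constant $c_i \in \bQ^\times$ and a linear form $\ell_i$ on $H_i \cong \bP^1$; nothing in hypotheses (2)--(3) forces $c_i$ into the square class of $\bZ_p^\times$. For a concrete obstruction, take the conic $X^2+Y^2=Z^2$ with $Q=[1:0:1]$ and tangent $L=X-Z$: under the standard parametrization $[s^2-r^2:2sr:s^2+r^2]$ one finds $L=-2r^2$, and $-2$ is not a square in $\bQ_5^\times$. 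So when $p \mid d_i'$ the individual factor $\brbig{L_i(P_p),d_i'}_p$ can genuinely equal $-1$, and the ``local computation at $\bar P$'' you allude to does not exist.

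The idea you are missing is that the \emph{second} slot of the Hilbert symbol is already under control. Condition~(3) forces $E=\Jac(D_\Lambda)$ to have good reduction at $p$ (smoothness of $\bar D_\Lambda$ requires $p\nmid\prod d_i\prod(a_i-a_j)$). Since $\Lambda'\in\Sel_2(E)$, the curve $D_{\Lambda'}$ has a $\bQ_p$-point, and the valuation argument of Lemma~\ref{lem-cd-Sel_2}(1) then gives $p\nmid d_1'd_2'd_3'$, i.e.\ each $d_i'\in\bZ_p^\times$. Combined with $L_i(P_p)\in\bZ_p^\times$ from your first step, every factor $\brbig{L_i(P_p),d_i'}_p$ is a Hilbert symbol of two units at an odd prime, hence $+1$. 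The obstacle you correctly flagged is not one to push through but one to sidestep by invoking the Selmer condition on $\Lambda'$.
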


\subsection{ Residue Symbols}\quad

In this subsection, we will introduce several residue symbols. The first residue symbol is the additive Jacobi symbol. Let $d$ be a positive odd integer and $m$ an  integer coprime to $d$. Then we define the additive Jacobi symbol $\ALeg md=1$ if the Jacobi symbol $\Leg md=-1$ and $0$ otherwise.

Since other residue symbols involve the Gaussian integer ring $\bZ[i]$, we first recall some conceptions related to $\bZ[i]$.  A prime element $\lambda$ of $\bZ[i]$ is called Gaussian if it is not a rational prime. An integer $\theta\in\bZ[i]$ is called primary if  $\theta \equiv 1 \pmod{2+2i}$. In particular,  any primary integer can be written uniquely as the product of primary primes. We use $N$ to denote the norm of an element or an ideal of $\bZ[i]$.

The second residue symbol is the general Legendre symbol over $\bZ[i]$.   Let $\fp$ be a  prime ideal  of $\bZ[i]$  coprime to  $(1+i)$. The general Legendre symbol $\Leg{\alpha}\fp$   is  the unique element of $\Brbig{0, \pm1}$ such that
$\alpha^{\frac{N\fp-1}2}\equiv \Leg\alpha\fp \pmod \fp.$
We refer to   Page 196 of Hecke \cite{heckeGTM77}. In particular, if $\lambda$ is the unique primary prime in $\fp$, we  put $\Leg\alpha\lambda=\Leg\alpha\fp.$
If $\lambda$ has a factorization $\prod_{l=1}^k \lambda_l$ of primary primes, then we define $\Leg\alpha\lambda$ to be $\prod_{l=1}^k\Leg\alpha{\lambda_l}.$

The third residue symbol is the quartic residue symbol. We refer to Ireland-Rosen \cite{ireland1982classical}. Assume that $\lambda$ is a prime element   coprime to $(1+i)$. For a  Gaussian integer $\alpha$,   the quartic residue symbol $\Leg{\alpha}{\lambda}_4$   is defined to be  the unique element of $\Brbig{0, \pm 1, \pm i }$  such that
$\alpha^{\frac{N\lambda-1}4} \equiv\Leg\alpha\lambda_4 \pmod \lambda$.
Let $\lambda_1$ and $\lambda_2$ be two coprime Gaussian primes.
We have the quartic reciprocity law
\[\Leg{\lambda_1}{\lambda_2}_4=\Leg{\lambda_2}{\lambda_1}_4 (-1)^{\frac{N\lambda_1-1}4\frac{N\lambda_2-1}4}.\]
Assume that $\lambda$ has a  factorization $\prod_{l=1}^k \lambda_l$ of primary primes. We define $\Leg\alpha\lambda_4$ to be $\prod_{l=1}^k\Leg\alpha{\lambda_l}_4.$

The last residue symbol is the rational quartic residue symbol. Let
$p$ be a rational prime congruent to $1$ modulo $4$. So there are exactly  two primitive  primes $\lambda$ and  $\bar\lambda$ lying  above $p$. Here $\overline\lambda$ is the complex conjugate of $\lambda$ and  $p=\lambda\overline\lambda$. If $q$ a rational integer such that $\Leg qp=1$, then  the two quartic residue symbols $\Leg q\lambda_4$ and $\Leg q{\bar\lambda}_4$ take the same value, and  we use the symbol $\Leg qp_4$ to denote any of them.  Moreover, if $d$ is a positive integer such that all prime factors of $d$ are congruent to $1$ modulo $4$, then $\Leg qd_4$ is defined to be \[\prod_{p\mid d} \Leg qp_4^{v_p(d)}\] provided that  $q$ is a rational integer satisfying $\Leg qp=1$ for any   $p\mid d$. Here
$v_p(d)$ is the $p$-adic valuation of $d$.

\subsection{Gauss Genus Theory}\quad

In this subsection, we briefly summarize  Gauss genus theory. One can refer to \S 3 of \cite{wang2016-2} for a detailed proof.

Let $K$ be an imaginary quadratic number field with ideal class group $\cA$. We write the multiplication of ideal classes additively. Then the $2^i$-rank $h_{2^i}(\cA)$ of $\cA$ is defined to be $\dim_{\bF_2} 2^{i-1}\cA/2^i\cA$ with $i$ any positive integer.
By classical Gauss genus theory, the $2$-rank $h_2(\cA)=t-1$ with $t$ the number of different prime factors of the fundamental discriminant $D$ of $K$. In fact, $h_2(\cA)$ equals to the dimension of $\cA[2]$, which is the set consisting of ideal classes killed by $2$. In addition, $\cA[2]$ is an elementary abelian $2$-group generated by $(p,\alpha_0)$ with $p$ any prime factor of $D$ and $2\alpha_0=D+\sqrt D$.

As to the $4$-rank, we can easily deduce that $h_4(\cA)=\dim_{\bF_2} 2\cA\cap \cA[2]$. Therefore, the study of the $4$-rank is reduced to that of $2\cA\cap\cA[2]$. The key tool to study  $2\cA\cap\cA[2]$ is the R\'edei matrix. They are closely tied  via $\cD(K)\cap N_{K/\bQ}(\Kc)$. Here $\cD(K)$ is the set of positive square-free divisors of $D$ and $N_{K/\bQ}$ is the norm map from $K$ to $\bQ$. In fact, we have a two to one epimorpism \[\theta: \cD(K)\cap N_{K/\bQ}(\Kc) \longrightarrow 2\cA\cap \cA[2]\]
with $\theta(d)=[(d,\alpha_0)]$. Note that $\cD(K)$ is a group under the group operation $d_1\odot d_2=\frac{d_1d_2}{(d_1,d_2)^2}$. Moreover, the kernel of $\theta$ is $\Brlr{1,D'}$ with $D'$ the square-free part of $D$.

To connect $\cD(K)\cap N_{K/\bQ}(\Kc)$ with the R\'edei matrix, we first introduce some notation. Let $p_1,\cdots,p_t$ be the different prime divisors of $D$. We assume that $p_t=2$ if $2\mid D$. The R\'edei matrix $R=(r_{ij})_{(t-1)\times t}$ of $K$ is an $\bF_2$ matrix defined by $r_{ii}=\ALeg{D/p_i^*}{p_i}$ and $r_{ij}=\ALeg{p_j}{p_i}$ if $i\not=j$. Here $p_i^*=(-1)^{\frac{p_i-1}2}p_i$.
\begin{lemma}\label{lem-gauss}
Assume that $w$ is a positive odd integer satisfying $(w,D)=1$ and $\Leg Dp=1$ for any prime divisor $p$ of $w$. Let $W=\brBig{\ALeg w{p_1},\cdots,\ALeg w{p_{t-1}}}^{\rm T}$. Then we have an isomorphism
\[\Brbig{d\in\cD(K) \big| dw\in N_{K/\bQ}(\Kc) } \longrightarrow \Brbig{ Y\in \bF_2^t \big|\; RY=W},\]
where the map is given by $d\mapsto Y_d:=\brbig{v_{p_1}(d),\cdots,v_{p_t}(d)}^{\rm T}$ and its inverse is $(y_1,\cdots,y_t)^{\rm T}\mapsto \prod_1^t p_i^{y_i}$.
\end{lemma}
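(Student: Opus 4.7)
The map $d \mapsto Y_d$ is already a transparent bijection between $\cD(K)$ and $\bF_2^t$, identifying a square-free divisor with its vector of $p_i$-adic valuations. My plan is therefore to show that, under this bijection, the condition $dw \in N_{K/\bQ}(\Kc)$ corresponds exactly to the linear condition $R Y_d = W$. The main tool will be the Hasse norm theorem for the cyclic extension $K/\bQ$, which reduces the global norm condition on $dw$ to a conjunction of local norm conditions, one at each place of $\bQ$.

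First I would dispose of the places that impose no constraint. At the archimedean place, $K_\infty = \bC$, so every positive real is a norm and $dw > 0$ is automatic. At a finite prime $p \nmid Dw$, the extension $K/\bQ$ is unramified at $p$ and $dw$ is a $p$-adic unit, so $dw$ is a local norm. At a prime $p \mid w$ with $p \nmid D$, the hypothesis $\Leg{D}{p} = 1$ forces $p$ to split in $K$, whence every element of $\bQ_p^\times$ is a local norm. Thus only the ramified primes $p_1, \dots, p_t$ produce substantive constraints.

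The heart of the argument is the computation at each odd ramified prime $p_i$. Here the local norm condition becomes $\brbig{dw, D}_{p_i} = 1$, which I would expand bilinearly as $\prod_j \brbig{p_j, D}_{p_i}^{v_{p_j}(d)} \cdot \brbig{w, D}_{p_i}$. Using the standard tame Hilbert symbol formulas, for $j \neq i$ one gets $\brbig{p_j, D}_{p_i} = \brbig{p_j, p_i}_{p_i} \brbig{p_j, D/p_i}_{p_i} = \Leg{p_j}{p_i}$, and $\brbig{w, D}_{p_i} = \Leg{w}{p_i}$. For the diagonal term, a short calculation gives $\brbig{p_i, D}_{p_i} = \brbig{p_i, p_i}_{p_i} \brbig{p_i, D/p_i}_{p_i} = \Leg{-1}{p_i}\Leg{D/p_i}{p_i} = \Leg{D/p_i^*}{p_i}$, after substituting $p_i^* = (-1)^{(p_i-1)/2} p_i$. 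Passing from the multiplicative group $\{\pm 1\}$ to its additive avatar $\bF_2$, this identity is exactly the $i$-th row of $R Y_d = W$.

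Finally I must explain why the R\'edei matrix has only $t-1$ rows. The missing condition at $p_t$ (which may be $2$ when $2 \mid D$) is automatic given the others, by the Hilbert product formula $\prod_v \brbig{dw, D}_v = 1$: all other factors are $1$ by the previous analysis, so the remaining factor at $p_t$ must be $1$ as well. The main obstacle I anticipate is the explicit 2-adic Hilbert symbol computation when $2 \mid D$, which is considerably more delicate than the odd-prime case; routing the last coordinate through the product formula is precisely what sidesteps this difficulty. Once the equivalence of local conditions with $R Y_d = W$ is in hand, the explicit inverse $(y_1, \dots, y_t)^{\mathrm T} \mapsto \prod_i p_i^{y_i}$ is manifestly a two-sided inverse, completing the proof.
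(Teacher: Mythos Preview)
Your proof is correct and follows the standard route via the Hasse norm theorem and local Hilbert symbol computations. The paper does not actually prove this lemma: in \S2.4 it states the result and refers the reader to \S3 of \cite{wang2016-2} for a detailed proof. So there is no in-paper argument to compare against; your approach is precisely the classical one underlying Gauss genus theory, and your handling of the $p_t$-row via the Hilbert product formula (rather than a direct $2$-adic computation when $2\mid D$) is exactly the right economy.
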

Choose $w=1$ in this lemma, we obtain the following isomorphism
\[\cD(K)\cap N_{K/\bQ}(\Kc)\longrightarrow \Brbig{ Y\in \bF_2^t \big| \;RY=0}.\]
Consequently, $h_4(\cA)=t-1-\rank_{\bF_2}R$.

Like the $4$-rank, the study of the  $8$-rank $h_8(\cA)$   is equivalent to that
of $4\cA\cap \cA[2]$. This is equivalent to determine which $[\fa]\in 2\cA\cap
\cA[2]$ still lies in $4\cA$. We assume that $K=\bQ(\sqrt{-n})$, where
$n=p_1\cdots p_k$ is a positive square-free odd integer.
Assume that $2^rd$ lies in $\cD(K)\cap N_{K/\bQ}(\Kc)$ such that $d$ is a non-trivial divisor of $n$ and $r$ equals to $0,1$. Then   the following equation
\begin{equation}\label{eq:gauss}
dx^2+\frac nd y^2=2^rz^2
\end{equation}
has a non-trivial integer solution.
\begin{lemma}\label{lem-gauss-h-8}
Assume that $n,d,r,R$ are as above and $(u,v,w)$ is a positive primitive integer solution to (\ref{eq:gauss}). Let $W=\brBig{\ALeg w{p_1},\cdots,\ALeg w{p_{k}}}^{\rm T}$. Then $[\fa]\in 4\cA$ if and only if there is a $Y\in \bF_2^{k+1}$ such that
$RY=W$, where $\fa=(2^rd,\alpha_0)$.
\end{lemma}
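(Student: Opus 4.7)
The plan is to realize $[\fa]$ as $-2[\fb]$ for an explicit ideal class $[\fb]$, and then translate the condition $[\fa]\in 4\cA$ into a matrix equation via Lemma~\ref{lem-gauss}.

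\emph{Step 1 (constructing $\fb$).}  Set $\beta:=du+v\sqrt{-n}\in\cO_K$; its norm is $d^2u^2+nv^2=d\bigl(du^2+(n/d)v^2\bigr)=2^rdw^2=N(\fa)\cdot w^2$, so $(\beta)=\fa\cdot\fc$ with $N(\fc)=w^2$.  A prime-by-prime analysis, powered by the primitivity $(u,v,w)=1$, gives the following: (i) $(w,D)=1$, for if a prime $p\mid n$ divided $w$ the equation would force $p$ to divide both $u$ and $v$, and a separate parity check at $p=2$ handles the contribution of $2$ to $D$; (ii) every prime $p\mid w$ splits in $K$ as $(p)=\fp_p\bar{\fp}_p$, since reducing the norm equation modulo $p$ exhibits $-n$ as a nonzero square mod $p$; (iii) for each such $p$, exactly one of $\fp_p,\bar{\fp}_p$ appears in $(\beta)$ and with multiplicity $2v_p(w)$.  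Bundling these, $\fc=\fb^2$ with $\fb:=\prod_{p\mid w}\fp_p^{v_p(w)}$, so that
\[ (\beta)=\fa\cdot\fb^2, \qquad N(\fb)=w, \qquad [\fa]=-2[\fb]\ \text{in}\ \cA. \]

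\emph{Step 2 (reducing to a condition on $[\fb]$).}  Elementary abelian group arithmetic applied to $[\fa]=-2[\fb]$ and $2[\fa]=0$ yields
\[ [\fa]\in 4\cA \quad\Longleftrightarrow\quad [\fb]\in 2\cA+\cA[2]. \]
By Gauss genus theory (\S2.4), every element of $\cA[2]$ is of the form $[(e,\alpha_0)]$ for some $e\in\cD(K)$, so the right-hand side becomes: \emph{there exists $e\in\cD(K)$ such that $[\fb]-[(e,\alpha_0)]\in 2\cA$.}

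\emph{Step 3 (closing with Lemma~\ref{lem-gauss}).}  A basic genus-theoretic fact for imaginary quadratic $K$ is that for every integral ideal $\fd$ one has $[\fd]\in 2\cA\Leftrightarrow N(\fd)\in N_{K/\bQ}(\Kc)$.  The ``only if'' direction is immediate from $\fd\sim\fc^2$ and multiplicativity of norms; for ``if'', writing $N(\fd)=N(\gamma)$ makes the fractional ideal $\fd\cdot(\gamma)^{-1}$ have norm $1$, hence it is a product of factors $\fp/\bar{\fp}$ over split primes, and each such factor has class $2[\fp]\in 2\cA$.  Applied to $\fd=\fb\cdot(e,\alpha_0)$, whose norm is $we$, the condition from Step~2 reads: there exists $e\in\cD(K)$ with $ew\in N_{K/\bQ}(\Kc)$.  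Step~1 delivers the hypotheses $(w,D)=1$ and $\Leg{D}{p}=1$ for every $p\mid w$ required by Lemma~\ref{lem-gauss}, which then identifies this set of $e$'s bijectively with the solution set of $RY=W$.  Therefore $[\fa]\in 4\cA$ iff $RY=W$ has a solution $Y\in\bF_2^{k+1}$, as claimed.

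The main obstacle is Step~1: the explicit factorization $(\beta)=\fa\cdot\fb^2$ demands a careful local analysis at every prime dividing $2dnw$, with the parity check at $p=2$ needing to cover both $r=0$ and $r=1$.  Once this is in place, the remainder of the argument is a clean rearrangement within Gauss genus theory together with a single appeal to the previous lemma.
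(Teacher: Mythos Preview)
The paper does not prove this lemma in-text; \S2.4 merely summarises Gauss genus theory and defers all proofs to \S3 of \cite{wang2016-2}. Your argument is the classical R\'edei--Reichardt construction and is correct: from $\beta=du+v\sqrt{-n}$ one obtains $(\beta)=\fa\cdot\fb^{2}$ with $N\fb=w$, hence $[\fa]=-2[\fb]$; then the genus-theory equivalence $[\fd]\in 2\cA\Leftrightarrow N\fd\in N_{K/\bQ}(K^\times)$ combined with Lemma~\ref{lem-gauss} yields the claim. This is exactly the standard route, so it should coincide with the argument in the cited reference.

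One point deserves to be made explicit in your write-up. The stated dimension $Y\in\bF_2^{k+1}$ forces $t=k+1$, i.e.\ $2\mid D$ and hence $n\equiv 1\pmod 4$; the lemma is implicitly restricted to this case. Under this assumption your ``parity check at $p=2$'' always lands in the ramified situation: since $d\cdot(n/d)=n\equiv 1\pmod 4$ one has $d+n/d\equiv 2\pmod 4$, which forces $u,v$ to have the same parity when $r=1$ (hence $w$ odd) and opposite parity when $r=0$ (again $w$ odd). The oddness of $w$ is precisely what is needed to invoke Lemma~\ref{lem-gauss}, so this should be stated rather than left inside the phrase ``a separate parity check''. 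The local computation $v_{\fp_2}((\beta))=r$ then follows because $\fp_2$ is self-conjugate and $v_2(N\beta)=r$.
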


\subsection{Analytic Results}\quad

Given a  number field $F$,   let $n$ and $\cO$ be its degree and ring of algebraic integers respectively. We define $\Delta$ and   $N_F$ to be the   discriminant of $F$ and  the norm from $F$ to $\bQ$ respectively. We call a non-zero element $\gamma\in F$   totally positive if  it is positive under all real embedding provided that $F$ has a real embedding. If $F$ has no real embedding,   all non-zero elements of $F$ are  totally positive.

Let $\dag$ be  an integral ideal of $\cO$. We denote by $I(\dag)$   the group of all the fractional ideals that are coprime to $\dag$. We use $P_\dag$ to denote the group consisting of   the  principal fractional ideals $(\gamma)$ such that $\gamma$ is totally positive and   $\gamma\equiv 1\pmod\dag$.  Here the notation $\gamma\equiv 1\pmod \dag$   denotes $\gamma\in \cO_\fp$ and $\gamma\equiv 1 \pmod{ \fp^{v_{\fp}(\dag)}}$ for every prime ideal $\fp\mid\dag$, where $\cO_\fp$ is the integer ring of $F_\fp$.

If $\chi$ is a character of $I(\dag)/P_\dag$ with $\dag$ an integral ideal, then  we view it as a character on $I(\dag)$ call $\chi$   a character modulo $\dag$. In addition, if a fractional ideal $\fa$ is not coprime to $\dag$,   we define $\chi(\fa)=0$. Let $\Lambda(\fa)$  be the Mangoldt function defined by
\begin{equation*}
\begin{cases}
\log N_F\fp& \text{if $\fa=\fp^m$ with $m\ge 1$ },\\
0 & \text{ otherwise}.
\end{cases}
\end{equation*}
Then  $\psi(x,\chi)$ is defined to be
\[\psi(x,\chi)=\sum_{N_F\fa\le x} \chi(\fa) \Lambda(\fa).\]
The following     explicit formula (Proposition \ref{mainthm-explicitformula}) of $\psi(x,\chi)$  is proved in  P114 of Iwaniec-Kowalski \cite{iwaniec2004analytic}.
\begin{prop}\label{mainthm-explicitformula}\quad\\
If $\chi$ is a non-principal character modulo an integral ideal $\dag$ and $1\le T\le x$, then \begin{equation}\label{eq:expl-iwane}
\psi(x,\chi)=-\sum_{|\Im\rho|\le T}\frac{x^\rho-1}\rho+
O_F\brBig{xT^{-1}\cdot\log x \cdot \log(x^n\cdot N_F\dag)}.
\end{equation}
Here $\rho$ runs over all the zeros of $L(s,\chi)$ with $0\le \Re\rho \le 1$ and
$O_F$ means the implied constant only depends on $F$.
\end{prop}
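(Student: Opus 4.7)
The plan is to derive this by combining a truncated Perron formula applied to the logarithmic derivative of $L(s,\chi)$ with a contour shift past the non-trivial zeros. I would begin from the Dirichlet series expansion
\[-\frac{L'}{L}(s,\chi)=\sum_{\fa}\chi(\fa)\Lambda(\fa)(N_F\fa)^{-s},\qquad \Re s>1,\]
and apply Mellin inversion with the kernel $(x^s-1)/s$, chosen because it is entire at $s=0$ and therefore bypasses the pole from $1/s$. A termwise computation (using that $\Lambda(\fa)=0$ unless $N_F\fa\ge 2$) yields
\[\psi(x,\chi)=\frac{1}{2\pi i}\int_{c-i\infty}^{c+i\infty}\left(-\frac{L'}{L}(s,\chi)\right)\frac{x^s-1}{s}\,ds\]
for $c>1$; truncating at height $T$ with $c=1+(\log x)^{-1}$ introduces the standard Perron error $O_F(xT^{-1}\log x)$, which is already subsumed in the claimed bound.

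Next I would shift the truncated contour to the vertical line $\Re s=-U$ with $U=1$, integrating around the rectangle with vertical sides $\Re s=c$ and $\Re s=-U$ and horizontal sides $\Im s=\pm T$. Because $\chi$ is non-principal, $L(s,\chi)$ is entire, so the only poles of the integrand inside the rectangle are the zeros of $L(s,\chi)$ (counted with multiplicity). Each non-trivial zero $\rho$ with $0\le\Re\rho\le 1$ and $|\Im\rho|\le T$ contributes a residue $-(x^\rho-1)/\rho$, which is the main term in the statement. Any trivial zero of $L(s,\chi)$ at a real point $s_0\in(-U,0]$ coming from the completing gamma factors contributes $-(x^{s_0}-1)/s_0=O(1)$, and the left vertical integral on $\Re s=-U$ is $O_F(x^{-U})$ after using polynomial bounds; both are absorbed in the error.

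The heart of the estimate is the bound on the horizontal sides $\Im s=\pm T$, for which I would invoke the standard analytic input for Hecke $L$-functions: (i) the zero-counting bound $\#\{\rho:|\Im\rho-t|\le 1\}\ll_F\log(N_F\dag\,(|t|+3)^n)$, obtained from the Hadamard factorization and Jensen's formula; (ii) the partial-fraction consequence
\[\left|\frac{L'}{L}(\sigma+it,\chi)\right|\ll_F\log(N_F\dag\,(|t|+3)^n),\qquad -1\le\sigma\le 2,\]
valid once $T$ has been adjusted by $O(1)$ so that $\Im s=\pm T$ stays a safe distance from any zero; and (iii) the convexity bound in $t$ derived from the functional equation relating $L(s,\chi)$ to $L(1-s,\bar\chi)$ via Stirling. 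Integrating these bounds against $|x^s-1|/|s|$ across the horizontal sides gives a contribution of size $O_F(xT^{-1}\log x\cdot\log(x^n N_F\dag))$, matching the stated error.

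The main obstacle is maintaining uniformity of the implicit constants in both $x$ and the analytic conductor $N_F\dag$, while allowing only the degree $n$ dependence to be absorbed into the subscript $F$. The vertical shift of $T$ by $O(1)$ to avoid zeros must be carried out using (i) without introducing uncontrolled dependence on $\dag$, and the Hadamard product constants must be chosen so that (ii) is genuinely uniform in the conductor; these are the classical subtleties of the explicit formula. Modulo this bookkeeping, the argument is the direct extension of the Dirichlet $L$-function explicit formula presented in Chapter 5 of Iwaniec--Kowalski \cite{iwaniec2004analytic}, with the gamma factor for the number field $F$ replacing the Dirichlet gamma factor.
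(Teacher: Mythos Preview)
The paper does not give its own proof of this proposition: it is quoted verbatim as a known result, with the attribution ``proved in P114 of Iwaniec--Kowalski \cite{iwaniec2004analytic}'' immediately before the statement, and the subsequent refinement to formula~(\ref{eq:explicit-we need}) is likewise ``omit[ted]'' as standard. Your sketch is the classical Perron-plus-contour-shift argument, which is precisely the method Iwaniec--Kowalski use, so there is nothing to compare; your outline is correct and matches the cited source.
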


Note that  the first term of the formula (\ref{eq:expl-iwane}) is not estimated. It can be estimated by the same way as  the classical case, and we omit its proof.
We  derive the   explicit  formula
\begin{equation}\label{eq:explicit-we need}
\psi(x,\chi)=-\frac{x^{\beta'}}{\beta'}+ R(x,T)
\end{equation}
with
\[R(x,T)\ll x\cdot\log^2(x  N_F\dag)\cdot\exp\brBig{-\frac{c_1\log x}{\log |T  N_F\dag|}}+xT^{-1}\log x\cdot\log \ABbig{x^n   N_F\dag}+x^{\frac14}\log x.\]
Here $c_1$ is a positive constant and the term $-\frac{x^{\beta'}}{\beta'}$ occurs only if $\chi$ is a real character such that $L(s,\chi)$ has a zero $\beta'$  satisfying
\[\beta'>1-\frac {c_2}{\log N_F\dag}\]
with $c_2$  a positive constant.

For further application, we  introduce  Siegel Theorem   and Page  Theorem  over $F$. The following Proposition \ref{thm-Siegel} is Siegel  Theorem over $F$, and the references are  Fogels \cite{Fogels1963, Fogels1965, Fogels1968}.
\begin{prop} \label{thm-Siegel}\quad
Let $\chi$ be a character modulo  an integral $\dag$  and $D=|\Delta| N_F\dag>D_0>1$.
\begin{enumerate}
\item[(i)] There is a positive constant $c_3=c_3(n)$   such that in the region
\[\Re (s)> 1-\frac {c_3}{\log D(1+\ABlr{\Im (s)})}>\frac34 \]
there is no zero of $L(s,\chi)$ if $\chi$ is complex, and for at most one real character $\chi' $ there maybe a simple zero $\beta'$ of $L(s,\chi')$.
\item[(ii)] Let $\beta'$ be the exceptional zero of the exceptional
character $\chi'$ modulo $\dag$. Then for any $\epsilon>0$ there exists a positive constant $c_4=c_4(n,\epsilon)$ such that
    \[1-\beta'> c_4(n,\epsilon) D^{-\epsilon}.\]
\end{enumerate}
\end{prop}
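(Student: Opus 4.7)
The result is the Page--Siegel theorem transplanted from $\bQ$ to the number field $F$, so the plan is to mirror the classical proofs, using the Hecke $L$-function $L(s,\chi)$ of a ray class character modulo $\dag$ in place of a Dirichlet $L$-function, and the Dedekind zeta function $\zeta_F(s)$ in place of $\zeta(s)$. Throughout, the analytic conductor of $\chi$ has the shape $D(1+|\Im(s)|)^n$, and every implied constant must be tracked so that it depends only on $n$, not on $|\Delta|$ or $N_F\dag$.

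For part (i), I would follow the classical de la Vall\'ee--Poussin argument. Starting from the Hadamard product and functional equation for $L(s,\chi)$, which together force the zero density to grow like $\log D(1+|t|)$, I would apply the non-negativity of $3+4\cos\theta+\cos 2\theta$ to
\[ -\Re\brBig{ 3\frac{\zeta_F'}{\zeta_F}(\sigma) + 4\frac{L'}{L}(\sigma+it,\chi) + \frac{L'}{L}(\sigma+2it,\chi^2)} \ge 0 \]
at $s=\sigma+it$ with $\sigma>1$. Combined with the simple pole of $\zeta_F$ at $s=1$, this forces any hypothetical zero of $L(s,\chi)$ to lie outside a region of the stated shape, with an exception only when $\chi^2$ is principal, i.e.~when $\chi$ is real -- the classical real-character loophole. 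A further Landau-type argument, comparing a hypothetical pair of real zeros, isolates at most one real character admitting a simple real zero in this region.

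For part (ii), I would run Siegel's two-character trick. Suppose two real characters $\chi_1'$ and $\chi_2'$ modulo $\dag_1,\dag_2$ carry exceptional zeros $\beta_1',\beta_2'$ both extremely close to $1$. Form
\[ Z(s)=\zeta_F(s)\, L(s,\chi_1')\, L(s,\chi_2')\, L(s,\chi_1'\chi_2'), \]
which, by Artin formalism, is the Dedekind zeta function of a biquadratic extension of $F$; consequently $\log Z(s)$ has non-negative Dirichlet coefficients, and $Z(\sigma)\ge 1$ for $\sigma>1$. Choosing $\sigma_0=1-\delta$ with $\delta$ slightly larger than $1-\beta_1'$, and combining $Z(\sigma_0)\ge 1$ with the vanishing factor $L(\sigma_0,\chi_1') \ll (\sigma_0-\beta_1')\log^2 D$ together with the convexity bound $L(\sigma_0,\cdot)\ll (\log D)^{O(1)}$ on the remaining $L$-factors, yields a lower bound for $L(1,\chi_2')$. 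The mean value theorem on $[\beta_2',1]$ then converts this into the desired inequality $1-\beta_2'\gg_\epsilon D^{-\epsilon}$. The standard Siegel dichotomy -- either no exceptional character exists, in which case part (i) alone suffices, or some fixed $\chi_1'$ serves as the witness for every other real character -- produces the ineffective constant $c_4(n,\epsilon)$.

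The principal obstacle is precisely this ineffectivity: the two-character argument only guarantees the constant exists, and no method is known to compute $c_4(n,\epsilon)$ explicitly. A secondary technical hurdle is uniformity in $F$: all constants appearing in part (i) must depend on $n$ alone, which forces one to avoid any appeal to the residue of $\zeta_F$ at $s=1$ (for which only an ineffective Brauer--Siegel lower bound is available) and to track the analytic conductor $D(1+|t|)^n$ carefully through the Hadamard product, exactly as done in Fogels's papers.
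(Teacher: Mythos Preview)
The paper does not supply its own proof of this proposition; it is quoted as background, with the proof delegated to Fogels \cite{Fogels1963, Fogels1965, Fogels1968}. Your outline follows exactly the classical Page--Siegel machinery transplanted to Hecke $L$-functions, which is the content of those references, so in spirit you are aligned with what the paper relies on.

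One technical slip in your sketch of part (ii) is worth flagging. You write ``$Z(\sigma)\ge 1$ for $\sigma>1$'' and then invoke ``$Z(\sigma_0)\ge 1$'' at $\sigma_0=1-\delta<1$. The Dirichlet series for $Z$ need not converge to the left of $1$, so that inequality cannot be read off directly. The actual Siegel argument (either in Siegel's original form or in Goldfeld's streamlined version) feeds the non-negativity of the Dirichlet coefficients into a contour-shifted integral, or into a Taylor expansion of $Z(s)$ about a point $\sigma_1>1$, so that the pole at $s=1$ and the zero at $s=\beta_1'$ interact; it is this interplay, not a bare evaluation at $\sigma_0<1$, that produces the lower bound on $1-\beta_2'$. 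With that step corrected, the structure of your argument matches the standard proof.
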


Proposition \ref{thm-Page} is   Page  Theorem over $F$. One can refer to Hoffstein-Ramakrishnan \cite{hoffstein1995siegel}.
\begin{prop} \label{thm-Page}\quad
For any $Z\ge 2$  and $c_5$  a suitable constant,   there is at most a
real primitive character $\chi$ to a modulus $\dag$ with $N_F\dag\le Z$ such that $L(s,\chi)$ has a real zero $\beta$ satisfying $$\beta >1-\frac{c_5}{\log Z}.$$
\end{prop}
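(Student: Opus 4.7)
The plan is to argue by contradiction along classical Landau--Page lines. Suppose, for contradiction, that there exist two distinct real primitive characters $\chi_1, \chi_2$ modulo integral ideals $\dag_1, \dag_2$ with $N_F\dag_i \le Z$, such that each $L(s,\chi_i)$ has a real zero $\beta_i > 1 - c_5/\log Z$; the aim is to derive a contradiction once $c_5$ is sufficiently small.

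First I would introduce the auxiliary function
\[ F(s) = \zeta_F(s)\, L(s,\chi_1)\, L(s,\chi_2)\, L(s,\chi_1\chi_2). \]
The character $\chi_1\chi_2$ is non-trivial: if it were trivial then $\chi_2 = \chi_1^{-1} = \chi_1$ (because $\chi_1$ is real-valued), contradicting $\chi_1\ne\chi_2$. Hence $L(s,\chi_1\chi_2)$ is entire, and $F(s)$ is meromorphic on $\bC$ with only a simple pole at $s=1$ coming from $\zeta_F$. Moreover, the Dirichlet series $\log F(s)$ has non-negative coefficients: the local factor at a prime ideal $\fp$ contributes
\[ \tfrac1k N\fp^{-ks}\brbig{1 + \chi_1(\fp^k) + \chi_2(\fp^k) + \chi_1(\fp^k)\chi_2(\fp^k)} = \tfrac1k N\fp^{-ks}\brbig{1+\chi_1(\fp^k)}\brbig{1+\chi_2(\fp^k)} \ge 0, \]
since the real characters $\chi_i$ take values in $\Brbig{0, \pm 1}$. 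Consequently $-F'(\sigma)/F(\sigma)\ge 0$ for every real $\sigma > 1$.

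Second, I would upper-bound $-F'/F(\sigma)$ via the Hadamard factorizations of $\zeta_F$ and of each Hecke $L$-function. Standard theory yields, for real $\sigma$ slightly greater than $1$,
\[ -\frac{F'}{F}(\sigma) = \frac1{\sigma-1} - \sum_\rho \frac1{\sigma-\rho} + O_F\brBig{\log Z}, \]
where $\rho$ runs over the non-trivial zeros of the four factors of $F$, the term $1/(\sigma-1)$ is the contribution of the pole of $\zeta_F$, and the error absorbs the archimedean gamma-factor terms together with the constants $b(\chi)$ from Hadamard. Since each non-trivial zero satisfies $\Re\rho\in[0,1]$ and $\sigma>1$, every term $1/(\sigma-\rho)$ has positive real part, so discarding all zeros except $\beta_1$ and $\beta_2$ can only increase $-F'/F(\sigma)$. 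Combined with the non-negativity from the previous step this gives
\[ 0 \le \frac{1}{\sigma - 1} - \frac{1}{\sigma - \beta_1} - \frac{1}{\sigma - \beta_2} + C_F\log Z \]
for some constant $C_F$ depending only on $F$.

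Third, I would set $\sigma = 1 + \eta/\log Z$ for a fixed small $\eta>0$. Using $\beta_i > 1 - c_5/\log Z$, the inequality becomes
\[ 0 \le \frac{\log Z}{\eta} - \frac{2\log Z}{\eta + c_5} + C_F\log Z, \]
i.e.\ $\tfrac{2}{\eta+c_5} \le \tfrac{1}{\eta} + C_F$. Fixing $\eta$ so small that $\tfrac{1}{\eta} > 2C_F$, and then choosing $c_5$ small enough (depending on $\eta$ and $C_F$), the right side becomes strictly less than $\tfrac{2}{\eta+c_5}$, a contradiction. The main obstacle is the second step: one must rigorously establish the explicit formula for $-F'/F(\sigma)$ over a general number field with error $O_F(\log Z)$, which requires the standard zero-counting and gamma-factor estimates for Hecke $L$-functions. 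These are classical and underlie the explicit formula (\ref{eq:expl-iwane}) already recorded, but making the constants uniform in $\dag_1,\dag_2$ while absorbing all dependence on $F$ into $C_F$ is the delicate point.
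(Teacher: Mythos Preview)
Your argument is the classical Landau--Page proof and is essentially correct as sketched; the positivity of the Dirichlet coefficients of $-F'/F$, the Hadamard-product inequality, and the choice $\sigma=1+\eta/\log Z$ all work as you indicate, with the one caveat that the conductor of $\chi_1\chi_2$ may be as large as $N_F\dag_1\cdot N_F\dag_2\le Z^2$, which is harmless since $\log Z^2=2\log Z$ is still $O_F(\log Z)$.

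However, the paper does not give any proof of this proposition at all: it is stated as a preliminary result and simply attributed to Hoffstein--Ramakrishnan \cite{hoffstein1995siegel}. So there is no argument in the paper to compare against; you have supplied a self-contained proof where the paper is content to cite the literature. Your approach is in fact the standard one and is more elementary than invoking the machinery of \cite{hoffstein1995siegel}, which treats Siegel zeros in the much broader context of automorphic $L$-functions.
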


\section{Representation of $2$-Selmer Group}

Now we apply the result of \S2.1 to the elliptic curve
\[\cE_{a,b}: y^2=x(x-a)(x+b)\]
with $a$ and $b$ positive odd integers. For  positive square-free integer $n$, we consider the elliptic curve $\cEn_{a,b}=\cEn$  $$\cEn: y^2=x(x-an)(x+bn).$$
Choosing $a_1=an, a_2=-bn$ and $ a_3=0$ in (\ref{D-Lambda-general}), we get the following identification
\[\Sel_2(\cEn)=\BrBig{\Lambda=(d_1,d_2,d_3)\in (\bQc/\bQ^{\times2})^{3}\; \Big| \;d_1d_2d_3\in\bQ^{\times2},\; D_\Lambda(\bA)\not=\emptyset}\]
with $D_\Lambda$ the genus one curve defined by\[
\left\{\begin{aligned}
H_1:& -bnt^2+d_2u_2^2-d_3u_3^2=0,\\
H_2:& -ant^2+d_3u_3^2-d_1u_1^2=0,\\
H_3:&\;2cnt^2+d_1u_1^2-d_2u_2^2=0.
\end{aligned}\right.\]Here $a+b=2c$.
If $(x,y)\in\cEn(\bQ)$ is not a $2$-torsion point, it corresponds to $(x-an,x+bn,x)$. Moreover, the four elements $(an,0),\; (-bn,0),\;(0,0)$ and  $O$  of $\cEn(\bQ)[2]$ correspond to $\big(2ac, 2cn, an \big),\; \big(-2cn, 2bc, -bn \big),\;
(-an,bn,-ab)$ and $(1,1,1)$ respectively.

In this section, we always assume  that  $a,b$ and $c $ are coprime  positive odd integers such that
\begin{equation}\label{cd-a,b}
a+b=2c.
\end{equation}

\subsection{Local Solvability Conditions on $D_\Lambda$}\quad

We denote by $a=a_1a_2^2,\; b=b_1b_2^2$ and  $c=c_1c_2^2$ with $a_1, b_1$ and $c_1$ square-free integers.
\begin{lemma}\label{lem-cd-Sel_2}
Assume that  $n$ is a  positive square-free   integer   coprime to $2abc$. Let $\Lambda=(d_1,d_2,d_3)$ with $d_i$ non-zero square-free integers such that $d_1d_2d_3$ is a square.
\begin{enumerate}
\item[(1)] If  $p\nmid 2abcn $, then $D_\Lambda(\bQ_p)\not=\emptyset$   if and only if $p\nmid d_1d_2d_3$.
\item[(2)] $D_\Lambda(\bR)$ is non-empty if and only if $d_2>0$.
\item[(3)] If $D_\Lambda(\bQ_2) $ is non-empty, then    $d_1$ and $d_2$ have the same parity.
\item[(4)] If $d_1$ and $d_2$ are odd, then $D_\Lambda(\bQ_2)$ is non-empty if and only if either $4\mid d_1-d_3,\; 8\mid d_1-d_2$ or $4\mid d_1+an,\;  8\mid d_1-d_2+2cn$.
\end{enumerate}
\end{lemma}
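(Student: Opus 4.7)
The plan is to verify the four local solvability conditions by examining the system $H_1, H_2, H_3$ place by place, using throughout the fact that $H_1+H_2+H_3=0$ (since $-bn-an+2cn=0$) so that $D_\Lambda$ is really the intersection in $\bP^3$ of any two of these three conics, together with the square-freeness of each $d_i$ and the square condition $d_1d_2d_3\in\bQ^{\times 2}$. A recurring observation is that if $p\mid d_1d_2d_3$, then $p$ divides \emph{exactly two} of the $d_i$, because $v_p(d_1d_2d_3)$ must be even while each $v_p(d_i)\in\{0,1\}$. For part (1), forward direction, when $p\nmid 2abcn\cdot d_1d_2d_3$ all coefficients are $p$-adic units; one checks via the four singular members of the pencil $\lambda H_1+\mu H_2$ that $D_\Lambda$ has smooth good reduction, and the Hasse--Weil bound $\#D_\Lambda(\bF_p)\geq p+1-2\sqrt p\geq 1$ for $p\geq 3$ together with Hensel's lemma produces a $\bQ_p$-point. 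For the converse, say $p\mid d_1,d_2$ and $p\nmid d_3$: starting from a primitive integral solution, $H_3\pmod p$ forces $p\mid t$, then $H_1\pmod p$ forces $p\mid u_3$, and then, since $v_p(d_iu_i^2)=1+2v_p(u_i)$ is odd while the other two terms of $H_1$ and of $H_2$ now have even $p$-adic valuation $\geq 2$, the two equations force $p\mid u_2$ and $p\mid u_1$, contradicting primitivity.

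For part (2), the condition $d_1d_2d_3>0$ forces the number of negative $d_i$ to be even. When $d_2>0$, the point $(0,\sqrt{d_2d_3},\sqrt{d_1d_3},\sqrt{d_1d_2})$ lies on $D_\Lambda$ if all $d_i>0$, and $(1,\sqrt{an/(-d_1)},\sqrt{bn/d_2},0)$ works if $d_1,d_3<0$, with $H_3$ vanishing by $a+b=2c$. When $d_2<0$, one of the three forms becomes definite: $H_3=2cnt^2+d_1u_1^2+|d_2|u_2^2$ is positive-definite if $d_1>0,d_3<0$, and $-H_1=bnt^2+|d_2|u_2^2+d_3u_3^2$ is positive-definite if $d_1<0,d_3>0$, so only the trivial point exists. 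For part (3), if $d_1$ is odd and $d_2$ is even, then $v_2(d_3)=1$ as well; for a primitive $\bZ_2$-solution, $H_1\pmod 2$ forces $t$ even (the other two coefficients being even), $H_2\pmod 2$ then forces $u_1$ even, after which a parity-of-valuation argument in $H_3$ (the only odd-valuation term being $d_2u_2^2$) forces $u_2$ even, and the same argument in $H_1$ forces $u_3$ even, contradicting primitivity.

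For part (4), assume $d_1,d_2,d_3$ are all odd. A short mod-$2$ check on a primitive $\bZ_2$-solution shows that exactly one of the following holds: either (i) $t$ is even and $u_1,u_2,u_3$ are all odd, or (ii) $t$ is odd, $u_1,u_2$ are odd, and $u_3$ is even. In case (i), using $t^2\equiv 0\pmod 8$ and $u_i^2\equiv 1\pmod 8$, reducing $H_3$ mod $8$ gives $8\mid d_1-d_2$ and reducing $H_1$ mod $4$ gives $4\mid d_2-d_3$, hence $4\mid d_1-d_3$. In case (ii), using $t^2,u_1^2,u_2^2\equiv 1\pmod 8$ and $u_3^2\equiv 0\pmod 4$, reducing $H_3$ mod $8$ gives $8\mid d_1-d_2+2cn$ and reducing $H_2$ mod $4$ gives $4\mid d_1+an$. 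For the converse, each set of congruences yields an approximate solution such as $(2,1,1,1)$ or $(1,1,1,2)$ satisfying the three equations to sufficient 2-adic precision, which I would lift to an honest $\bZ_2$-point by Hensel's lemma applied to the quadratic $H_1$ (or $H_2$) in the remaining free variable. The main obstacle is this part: both directions demand careful tracking of parities and of 2-adic valuations across all three equations simultaneously, and the sufficiency must be confirmed by successful Hensel lifts in each of the two cases; parts (1)--(3) are essentially mechanical once one exploits the square-free and square conditions on the $d_i$.
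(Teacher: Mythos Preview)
Your approach is essentially the same as the paper's: valuation chasing for the necessary conditions and explicit point construction or Hensel lifting for sufficiency. Parts (1)--(3) and the forward direction of (4) agree with the paper (one small slip: for $t$ even you only have $t^2\equiv 0\pmod 4$, not $\pmod 8$; what you actually need and use is $2cnt^2\equiv 0\pmod 8$, which is fine).

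The one place where your sketch is genuinely incomplete is the converse of (4). A single approximate solution per case does not lift uniformly; the paper must split each case once more. In the case $4\mid d_1-d_3$, $8\mid d_1-d_2$ one first observes $d_3\equiv d_1d_2\equiv 1\pmod 8$, so $d_1\equiv 1\pmod 4$, and then takes $(t,u_1)=(0,1)$ if $d_1\equiv 1\pmod 8$ but $(t,u_1)=(2,1)$ if $d_1\equiv 5\pmod 8$, solving explicitly for $u_2,u_3$ as $2$-adic square roots. Your proposed $(2,1,1,1)$ fails in the subcase $d_1\equiv 1\pmod 8$, since then $(4an+d_1)/d_3\equiv 5\pmod 8$ is a non-square. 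Likewise, in the case $4\mid d_1+an$, $8\mid d_1-d_2+2cn$ the paper distinguishes $8\mid d_1+an$ (take $(t,u_3)=(1,0)$) from $4\parallel d_1+an$ (take $(t,u_3)=(1,2)$). So your Hensel mechanism is exactly right, but you need this finer subcase analysis to carry it through.
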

\begin{proof}
(1)  Let $p$ be a prime such that $p\nmid 2abcn$. If $p\mid d_1d_2d_3$, then  $p$ divides exactly two of $d_1, d_2$ and $d_3$. We assume that $p\mid d_1$ and $p\mid d_2$. Since we are dealing with homogeneous equations, we may assume that   $u_1, u_2, u_3$ and $t$ are $p$-adic integers and at least one of them is a $p$-adic unit. By comparing $p$-adic valuations of both sides of $H_3$, we get that $p\mid t$. Similarly, from $H_1$   we derive that $p\mid u_3$. Then via $H_1$ and $H_2$ we infer that $p\mid u_2$ and $p\mid u_1$. So $p\mid (t,u_1,u_2,u_3)$, which is impossible. Thus $D_\Lambda(\bQ_p)$ is empty provided that $p\mid d_1d_2d_3$.

Now we assume that $p\nmid d_1d_2d_3$. We will use Weil conjecture for curve (see Silverman \cite{SilvermanGTM106} P134) to show that   $D_\Lambda(\bQ_p)$ is non-empty.
Note that $D_\Lambda$ modulo $p$ gives rise to a smooth projective curve over $\bF_p$ by $p\nmid 2nabcd_1d_2d_3$. Weil conjecture implies that
\[Z(D_\Lambda,T)=\frac{P_1(T)}{(1-T)(1-pT)}\]
with $P_1(T)\in\bZ[T]$ factoring as
\[P_1(T)=(1-\alpha  T)(1-\overline\alpha T)\]
over $\bC$. Here  $\alpha$ has norm $p^\half$ and $Z(D_\Lambda,T) $ is the zeta function of $D_\Lambda$ over $\bF_p$ given by
\[Z(D_\Lambda,T)=\exp\brBig{\sum_{m=1}^\infty \#D_\Lambda(\bF_{p^m})\frac{T^m}m},\]
where $\#D_\Lambda(\bF_{p^m})$ denotes the number of points of $D_\Lambda $ over $\bF_{p^m}$. Therefore,
\[\sum_{m=1}^\infty \#D_\Lambda(\bF_{p^m}) \frac{T^m}m=
\log(1-\alpha T)+\log(1-\overline\alpha T)-\log(1-T)-\log(1-pT).\]
Comparing the coefficients of $T$ in above equation, we have
\[\#D_\Lambda(\bF_p)=1+p-(\alpha+\overline\alpha)\ge 1+p-2\sqrt p>0.\]
Therefore, $D_\Lambda(\bQ_p)$ is non-empty by Hensel's Lemma.

(2) If $d_2<0$, we see that $H_1(\bR)$ is not solvable for the coefficients  on its left hand side are all negative. Conversely, if $d_2>0$, then $D_\Lambda(\bR)$ is obviously solvable.

(3) Now we assume that $D_\Lambda(\bQ_2)$ is non-empty.  Assume that $d_1$ and $d_2$ have different parity. We first treat the case that $d_1$ is even  and $d_2$ is odd. Then $d_3$ is even by $d_1d_2d_3\in\bQ^{\times2}$.  Considering the $2$-adic valuations of both sides of  $H_2$ and $H_3$, we get that $t$ and $u_2$ are even. From similar considerations on $H_1$ and $H_3$,  we derive that $u_3$ and $u_1$ are also even.   So $u_1,u_2,u_3$ and $t$ are even, which is impossible. Similar arguments shows that the case that $d_1$ is odd and $d_2$ is even is also impossible.

(4) Let $d_1$ and $d_2$ be odd integers. First, we assume that  $D_\Lambda(\bQ_2)$ is non-empty. Since we are dealing with homogeneous equations, we may assume that $u_1, u_2, u_3$ and $t$ are $2$-adic integers and at least one of them is odd. Viewing $H_3$ as a congruence modulo $4$, we see that $u_1$ and $u_2$ are odd. From $H_2$ we infer that exactly one of $t$ and $u_3$ is even. Now we divide this into two subcases according to the parity of $t$.\\
Case (i): $t$ is odd. Then $u_3$ is even. Considering $H_2$ and $H_3$ as congruences modulo $4$ and $8$ respectively, we get that $4\mid d_1+an$ and $8\mid d_1-d_2+2cn$.\\
Case (ii): $t$ is even. Then $u_3$ is odd. Viewing $H_2$ and $H_3$ as congruences modulo $4$ and $8$ respectively, we obtain that $4\mid d_1-d_3$ and $8\mid d_1-d_2$.

Finally, we assume that either $4\mid d_1-d_3,\; 8\mid d_1-d_2$ or $4\mid d_1+an,\;  8\mid d_1-d_2+2cn$. According to these conditions, we divide it into two subcases.\\
Case (iii):   $4\mid d_1-d_3$ and $8\mid d_1-d_2$. We have $d_3\equiv d_1d_2\equiv1\pmod 8$. Since $4\mid d_1-d_3$, we get $d_1\equiv1\pmod4$. If $d_1\equiv1\pmod 8$, then $d_i$ has square root in $\bZ_2$ and  we choose $t=0$ and $u_i=d_i^{-\half}$ for $1\le i\le 3$. If $d_1\equiv 5\pmod8$, then $d_1+4an\equiv 1\pmod 8$ and $d_1+8cn\equiv d_2\pmod 8$; so $\brlr{d_1+8cn}{d_2}^{-1}$ and $\brlr{d_1+4an}{d_3}^{-1}$ have square roots in $\bZ_2$; in addition, we choose $t=2, u_1=1,$
$$u_2=\sqrt{\brlr{d_1+8cn}{d_2}^{-1}}   \quad {\rm and}\quad u_3=\sqrt{\brlr{d_1+4an}{d_3}^{-1}}.$$
Case (iv): $4\mid d_1+an$ and $8\mid d_1-d_2+2cn$. If $8\mid d_1+an$, then we choose $t=1$ and $u_3=0$; so $-and_1^{-1} $ and $ (-an+2cn)d_2^{-1} $ are congruent to $1$ modulo $8$, and they have square roots in $\bZ_2$;   let $u_1$ and $u_3$ be any square roots of $-and_1^{-1} $ and $ (-an+2cn)d_2^{-1} $ respectively. If $4 \parallel d_1+an$, we choose $t=1$ and $u_3=2$; so $(4-an)d_1^{-1}$ and $(2cn+4-an)d_2^{-1}$ are congruent to $1$ modulo $8$, and they have square roots in $\bZ_2$; we define $u_1$ and $u_2$ to be any square roots of $(4-an)d_1^{-1}$ and $(2cn+4-an)d_2^{-1}$ respectively.
Therefore, in any case we have $D_\Lambda(\bQ_2)\not=\emptyset$.

This completes the proof of the lemma.
\end{proof}

Assume that $n$ is a  positive square-free integer   coprime to $2abc$. By Lemma \ref{lem-cd-Sel_2},  any element of $\Sel_2(\cEn)/\cEn(\bQ)[2]$ has a unique representative $\Lambda=(d_1,d_2,d_3)$ with $d_i$ positive square-free integers satisfying
\begin{equation}\label{cd-Lambda-condition}
d_1d_2d_3\in\bQ^{\times2},\; d_2>0,\;   d_1| nabc\; {\rm{and}}\; d_2|nabc.
\end{equation}

\begin{lemma}\label{lem-Sel2-n part}
Assume that $n$ is a  positive square-free integer   coprime to $2abc$ and $\Lambda=(d_1,d_2,d_3)$ with $d_i$ positive square-free integers such that (\ref{cd-Lambda-condition}) holds. Let $p$ be a prime divisor of $n$.
\begin{itemize}
\item  If $p\nmid d_1$ and $p\nmid d_2$, then $D_\Lambda(\bQ_p)\not=\emptyset$   if and only if $\Leg{d_1}p=\Leg{d_2}p=1$.
\item If $p\nmid d_1$ and $p\mid d_2$, then $D_\Lambda(\bQ_p)$ is non-empty if and only if  $\Leg{d_1}p=\Leg{2ac}p$ and $\Leg{N/d_2}p=\Leg{2ab}p$, where $N=abcn$.
\item If $p\mid d_1$ and $ p\nmid d_2$, then $D_\Lambda(\bQ_p)$ is non-empty if and only if  $\Leg{N/d_1}p=\Leg{-2ab}p$ and $\Leg{d_2}p=\Leg {2bc}p$.
\item If $p\mid d_1$ and $ p\mid d_2$, then $D_\Lambda(\bQ_p)$ is non-empty if and only if  $\Leg{N/d_1}p=\Leg{-bc}p$ and $\Leg{N/d_2}p=\Leg{ac}p$.
\end{itemize}
\end{lemma}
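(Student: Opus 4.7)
The plan is to carry out a case-by-case $p$-adic valuation analysis of $D_\Lambda(\bQ_p)$, analogous to the proof of Lemma \ref{lem-cd-Sel_2}. Since $p\mid n$, the curve $\cEn$ has bad reduction at $p$, so the Weil-bound argument used for good primes in part (1) of Lemma \ref{lem-cd-Sel_2} does not apply. Instead, I would exploit the identity $H_1+H_2+H_3=0$, which follows from $a+b=2c$: a solution to any two of the $H_i$ automatically satisfies the third, so local solvability of $D_\Lambda$ at $p$ reduces to joint local solvability of, say, $H_2$ and $H_3$.

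For Case 1 ($p\nmid d_1d_2d_3$), I would reduce $H_2$ and $H_3$ modulo $p$. In $H_3$ the term $2cnt^2$ has $v_p=1$ when $t$ is a unit, while a valuation argument rules out the possibility that both $u_1$ and $u_2$ are divisible by $p$. Thus reduction modulo $p$ gives $d_1u_1^2 \equiv d_2u_2^2 \pmod p$ with $u_1, u_2$ units, forcing $\Leg{d_1d_2}{p}=1$. Similarly $H_2$ forces $\Leg{d_1d_3}{p}=1$. Combined with $\Leg{d_1d_2d_3}{p}=1$ (from $d_1d_2d_3\in\bQ^{\times 2}$), these yield $\Leg{d_1}{p}=\Leg{d_2}{p}=\Leg{d_3}{p}=1$. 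Conversely, when these all equal $1$, the choice $t=0$ and $u_i=\sqrt{d_i^{-1}}$ gives an explicit solution of $D_\Lambda$ in $\bQ_p$.

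For Cases 2, 3, 4, where $p$ divides exactly two of $d_1, d_2, d_3$, I would write those two as $pd_j'$ and write $n=pn'$, then compare valuations equation by equation to determine which variables must be divisible by $p$. For example, in Case 2 ($p\mid d_2$, $p\mid d_3$), comparing $v_p$ on both sides of $H_3$ forces $p\mid u_1$. After substituting $u_1=pu_1'$ and dividing $H_3$ by $p$, reduction modulo $p$ yields $2cn't^2 \equiv d_2'u_2^2 \pmod p$, so $\Leg{2cn'd_2'}{p}=1$. Rewriting using $N/d_2=abcn'/d_2'$ converts this into $\Leg{N/d_2}{p}=\Leg{2ab}{p}$. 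An analogous reduction of $H_2$ gives $\Leg{an'd_3'}{p}=1$; combining with the previous condition and with $d_1\equiv d_2'd_3'\pmod{\bQ^{\times 2}}$ (a consequence of $d_1d_2d_3\in\bQ^{\times 2}$) reorganizes this into $\Leg{d_1}{p}=\Leg{2ac}{p}$. Cases 3 and 4 are handled by the same scheme, and the symmetry among the two-torsion points $(an,0), (-bn,0), (0,0)$ produces the factors $-2ab, 2bc, -bc, ac$ in the remaining cases.

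The main obstacle is bookkeeping: verifying that the $p$-adic conditions extracted by valuation analysis rearrange cleanly into the Legendre-symbol identities of the lemma, with the correct signs and arrangement of factors $2ab, 2ac, 2bc, -2ab, -bc, ac$. A secondary subtlety is the sufficiency direction in Cases 2, 3, 4: once the stated conditions hold, I would construct a primitive $\bQ_p$-solution explicitly by Hensel lifting the natural mod-$p$ candidate, rather than relying on an abstract parametrization argument for the conics $H_2$ and $H_3$.
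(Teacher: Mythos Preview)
Your proposal is correct and matches the paper's approach: the paper proves only Case~1 in full, by reducing $H_2$ and $H_3$ modulo $p$ to obtain the Legendre-symbol conditions and then exhibiting the explicit solution $t=0$, $u_i=d_i^{-1/2}$ for the converse, and dismisses the remaining three cases with ``can be proved similarly.'' Your valuation analysis for Cases~2--4 is exactly the expansion of that ``similarly,'' and your observation that $H_1+H_2+H_3=0$ (so only two of the three conics need to be handled) is implicit in the paper's choice to look at only two equations at a time.
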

\begin{proof}
Assume that $p\nmid d_1d_2$. So $p\nmid d_3$. If $D_\Lambda(\bQ_p)$ is non-empty, then by $H_2$ and $H_3$ we get $\Leg{d_2d_3}p=\Leg{d_1d_3}p=1$, namely $\Leg{d_1}p=\Leg{d_2}p=1$. Conversely, if $\Leg{d_1}p=\Leg{d_2}p=1$, then we set $t=0$ and $u_i=d_i^{-\half}$ for $1\le i\le 3$. So $(t,u_1,u_2,u_3)$ lies in $D_\Lambda(\bQ_p)$. The remained cases can be proved similarly. This completes the proof of the lemma.
\end{proof}
\begin{lemma}\label{lem-Sel2-abc part}
Assume that $n$ is a  positive square-free integer   coprime to $2abc$ and $\Lambda=(d_1,d_2,d_3)$ with $d_i$ positive square-free integers such that (\ref{cd-Lambda-condition}) holds.
\begin{itemize}
\item   Let $p$ be a prime divisor of $a$. Then $D_\Lambda(\bQ_p)\not=\emptyset$ implies that $p\nmid d_2$. Under the condition $p\nmid d_2$, we have
\begin{itemize}
\item if $p\nmid d_1$, then $D_\Lambda(\bQ_p)$ is non-empty if and only if $\Leg{d_2}p=1$;
\item if $p\mid d_1$, then $D_\Lambda(\bQ_p)$ is non-empty if and only if $\Leg{bnd_2}p=1$ provided that $p\mid a_1$ with $p\nmid a_2$ and $\Leg{d_2}p=\Leg{bn}p=1$ provided that $p\mid a_2$.
\end{itemize}
\item  Let $p$ be a prime divisor of $b$. Then $D_\Lambda(\bQ_p)\not=\emptyset$ implies that $p\nmid d_1$. Under the condition $p\nmid d_1$, we have
\begin{itemize}
\item if $p\nmid d_2$, then $D_\Lambda(\bQ_p)\not=\emptyset$ if and only if $\Leg{d_1}p=1$;
\item if $p\mid d_2$, then $D_\Lambda(\bQ_p)$ is non-empty if and only if  $\Leg{-and_1}p=1$ provided that $p\mid b_1$ with $p\nmid b_2$ and $\Leg{-an}p=\Leg{d_1}p=1$ provided that $p\mid b_2$.
\end{itemize}
\item    Let $p$ be a prime divisor of $c$. Then $D_\Lambda(\bQ_p)\not=\emptyset$ implies that $p\nmid d_3$. Under the condition $p\nmid d_3$, we have
\begin{itemize}
\item if $p\nmid d_1d_2$, then  $D_\Lambda(\bQ_p)$ is non-empty if and only if $\Leg{d_3}p=1$;
\item if $p\mid d_1$ and $p\mid d_2$, then $D_\Lambda(\bQ_p)$ is non-empty if and only if  $\Leg{and_3}p=1$ provided that $p\mid c_1$ with $p\nmid c_2$ and $\Leg{an}p=\Leg{d_3}p=1$ provided that $p\mid c_2$.
\end{itemize}
\end{itemize}

\end{lemma}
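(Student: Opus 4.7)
My plan is to carry out the same valuation-tracking and Hensel-lifting strategy used in the proofs of the previous two lemmas, now at primes $p$ dividing $abc$. After normalizing $(t,u_1,u_2,u_3)\in\bZ_p^4$ to be primitive, I rely on two structural facts throughout: the linear identity $H_1+H_2+H_3=0$ (which follows from $a+b=2c$), so any two of the three equations imply the third and one may safely discard the most degenerate of them; and the relation $d_1 d_2 d_3 \in \bQ^{\times 2}$ with each $d_i$ square-free, which forces any prime $p$ to divide either zero or exactly two of the $d_i$. The three situations $p \mid a$, $p \mid b$, $p \mid c$ are symmetric under cyclic permutation of the $H_i$ (with appropriate signs), so I will treat $p \mid a$ in detail and indicate that the cases $p \mid b$ and $p \mid c$ follow by the obvious relabelling.

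For $p \mid a$, one has $v_p(an) \geq 1$ while $v_p(bn) = v_p(2cn) = 0$, so $H_1$ and $H_3$ reduce to nondegenerate ternary conics modulo $p$ while $H_2$ is the degenerate equation. I first prove the necessary condition $p \nmid d_2$: if on the contrary $p \mid d_2$, then exactly one of $d_1, d_3$ is divisible by $p$; in either sub-case reducing $H_2$ together with one of $H_1, H_3$ modulo $p$ forces two of the four variables into $p\bZ_p$, after which a mod-$p^2$ reduction pushes the remaining two into $p\bZ_p$, contradicting primitivity. Under $p \nmid d_2$, I split on whether $p \mid d_1$. When $p \nmid d_1$ (so $p$ divides none of the $d_i$), the reduction of $H_2$ is $d_3 u_3^2 \equiv d_1 u_1^2 \pmod p$; combined with $d_1 d_2 d_3 \in \bQ^{\times 2}$, nontrivial solvability is equivalent to $\Leg{d_2}{p} = 1$, and the converse is obtained by Hensel-lifting a smooth point on $H_1 \cap H_3$ modulo $p$.

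When $p \mid d_1$ (so also $p \mid d_3$), the proof splits further by the parity of $v_p(a)$. For $v_p(a) = 1$ (that is, $p \mid a_1$ with $p \nmid a_2$), dividing $H_2$ by $p$ yields a nondegenerate conic that is automatically solvable over $\bF_p$, so the genuine obstruction comes from $H_3 \pmod p$: $2cn\, t^2 \equiv d_2 u_2^2 \pmod p$; using $2c \equiv b \pmod p$ (from $a+b = 2c$ and $p \mid a$) this is exactly $\Leg{bn d_2}{p} = 1$. For $v_p(a) \geq 2$ (the case $p \mid a_2$), the coefficient $-an$ of $H_2$ vanishes modulo $p^2$, so dividing $H_2$ by $p^2$ and writing $d_1 = p d_1',\ d_3 = p d_3'$ one obtains $d_3' u_3^2 \equiv d_1' u_1^2 \pmod p$; since $d_1' d_2 d_3' \in \bQ^{\times 2}$ this forces $\Leg{d_2}{p} = 1$, while the mod-$p$ analysis of $H_3$ independently yields $\Leg{bn}{p} = 1$. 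Sufficiency in every sub-case is verified by an explicit construction followed by Hensel lifting. The main obstacle is this last sub-case: the naive model of $D_\Lambda$ is singular modulo $p$ when $v_p(a) \geq 2$, so before applying Hensel one has to make the substitution $u_1 = p v_1,\ u_3 = p v_3$ and check that the resulting candidate point is a smooth point of an intersection of two nondegenerate conics over $\bF_p$. The conditions for $p \mid b$ and $p \mid c$ are derived in the same manner after swapping the roles of the degenerate equation ($H_1$ if $p \mid b$, $H_3$ if $p \mid c$) and the two nondegenerate ones, with the residue conditions $\Leg{-an d_1}{p} = 1$, $\Leg{-an}{p} = \Leg{d_1}{p} = 1$ in the $p \mid b$ case and $\Leg{an d_3}{p} = 1$, $\Leg{an}{p} = \Leg{d_3}{p} = 1$ in the $p \mid c$ case emerging from the same mod-$p$ versus mod-$p^2$ dichotomy.
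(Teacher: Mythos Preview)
Your overall strategy matches the paper's proof exactly: normalize to a primitive $\bZ_p$-point, treat $p\mid a$ in detail and invoke symmetry for $p\mid b,c$, first rule out $p\mid d_2$ by a valuation cascade, then split on $p\mid d_1$ and, within that, on $v_p(a)$. The necessity direction in each sub-case is handled just as in the paper. (One small slip: when $p\mid d_1$ and $p\mid a_2$ you divide $H_2$ by $p$, not $p^2$; writing $d_1=pd_1'$, $d_3=pd_3'$ gives $-(a/p)nt^2+d_3'u_3^2-d_1'u_1^2=0$, and only then does reduction modulo $p$ kill the first term. Your conclusion $d_3'u_3^2\equiv d_1'u_1^2\pmod p$ is still correct.)

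The one place your proposal diverges from the paper, and where it does not work as written, is the sufficiency in the last sub-case ($p\mid d_1$, $p\mid a_2$). The substitution $u_1=pv_1$, $u_3=pv_3$ does not produce ``an intersection of two nondegenerate conics over $\bF_p$'': with $p\mid d_1,d_3$ the terms $d_1u_1^2,d_3u_3^2$ acquire valuation $\ge3$, so $H_2$ forces $p\mid t$ (when $v_p(a)=2$), then $H_1$ forces $p\mid u_2$, and after rescaling you are back to the original singular model---both quadrics still have rank $2$ modulo $p$. The paper bypasses any desingularization by writing down an explicit $\bZ_p$-point: set $u_1=d_2/(d_1,d_2)$, so that $d_1u_1^2=d_2d_3$ exactly; then $H_2$ and $H_3$ read
\[
u_3^2=d_2+an\,d_3^{-1}t^2,\qquad u_2^2=d_3+2cn\,d_2^{-1}t^2,
\]
which modulo $p$ become $u_3^2\equiv d_2$ and $u_2^2\equiv bn\,d_2^{-1}t^2$ (using $p\mid d_3$ and $2c\equiv b$), solvable for any unit $t$ by the hypotheses $\Leg{d_2}{p}=\Leg{bn}{p}=1$. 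The same fixed-$u_1$ device handles the other sufficiency cases, supplemented by a pigeonhole count on quadratic residues---two subsets of size $(p+1)/2$ in $\bF_p$ must meet---to choose $t$ when the second congruence does not simplify so cleanly. No blow-up of the model is needed.
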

\begin{proof}
Let $p$ be a prime divisor of $a$. Assume that $D_\Lambda(\bQ_p)$ is non-empty. If $p\mid d_2$, then $p$ divides exactly one of $d_1$ and $d_3$. We may assume that $p\mid d_1$. So $p\nmid d_3$. Via $H_2$ and $H_3$, we obtain that $p\mid u_3$ and $p\mid t$. Then $H_1$ and $H_2$ imply that $p\mid u_2$ and $p\mid u_1$. So $p\mid (t,u_1,u_2,u_3)$, which is impossible. Therefore, $D_\Lambda(\bQ_p)\not=\emptyset$ implies that $p\nmid d_2$.

Now we assume that $p\nmid d_1d_2d_3$. If $D_\Lambda(\bQ_p)\not=\emptyset$, then $H_2$ implies that $\Leg{d_1d_3}p=1$, namely $\Leg{d_2}p=1$. Conversely, if $\Leg{d_2}p=1$, then we choose $u_1=\frac{d_2}{(d_1,d_2)}$. We have
\begin{eqnarray*}
u_3^2&=&d_2+and_3^{-1}t^2=d_2+\Obig p,  \\
u_2^2&=&d_3+2cnd_2^{-1}t^2,
\end{eqnarray*}where $\Obig p$ is $wp$ with $w$ some $p$-adic integer.
The first equation is solvable for any given $t$ by $\Leg{d_2}p=1$. Denote by $\cR$ the quadratic residue classes modulo $p$. Then $\cR$ and $d_3+2cnd_2^{-1}\cR$ have non-empty intersections for their cardinalities are $\frac{p+1}2$. Thus we can choose $t$ such that $d_3+2cnd_2^{-1}t^2$ is a quadratic residue. So $D_\Lambda(\bQ_p)$ is non-empty by Hensel's Lemma.

Now we consider the case $p\mid d_1$ and $p\mid d_3$. Then $p\nmid d_2$. First, we treat the subcase $p\mid a_2$. If $D_\Lambda(\bQ_p)$ is non-empty, then $\Leg{d_2bn}p=1$ by $H_1$. In addition, dividing $H_2$ by $p$ we derive that $\Leg{d_1/d_3}p=1$, namely $\Leg{d_2}p=1$. Conversely, if $\Leg{d_2}p=\Leg{bn}p=1$, then we choose $u_1=\frac{d_2}{(d_1,d_2)}$. We get
\begin{eqnarray*}
u_3^2&=&d_2+a_2^2d_3^{-1}\cdot a_1nt^2=d_2+\Obig  p,  \\
u_2^2&=&d_3+2cnd_2^{-1}t^2=bnd_2^{-1}t^2+\Obig p.
\end{eqnarray*}
These equations are solvable for $\Leg{d_2}p=\Leg{bn}p=1$.
So $D_\Lambda(\bQ_p)$ is non-empty. Finally, we treat the subcase $p\mid a_1$ with $p\nmid a_2$. If $D_\Lambda(\bQ_p)$ is non-empty, we get $\Leg{d_2bn}p=1$ by $H_1$. Conversely, if $\Leg{d_2bn}p=1$, then we choose $u_1=\frac{d_2}{(d_1,d_2)}$. We obtain
\begin{eqnarray*}
u_3^2&=&d_2+a_1d_3^{-1}\cdot a_2^2nt^2,  \\
u_2^2&=&d_3+2cnd_2^{-1}t^2=bnd_2^{-1}t^2+\Obig p.
\end{eqnarray*}
The second equation is solvable for any $t$ by $\Leg{d_2bn}p=1$. Like the case $p\nmid d_1d_2d_3$, we know that there is a $t$ such that $d_2+a_1d_3^{-1}a_2^2nt^2$ is a quadratic residue modulo $p$. So $D_\Lambda(\bQ_p)$ is non-empty. Hence, we finish the proof of the case $p\mid a$.

For the case $p\mid bc$, we can prove similarly. This completes the proof of the lemma.
\end{proof}

\begin{lemma}\label{lem-sel_2-place 2}
Assume that $n$ is a  positive square-free integer   coprime to $2abc$ and $\Lambda=(d_1,d_2,d_3)$ with $d_i$ positive square-free odd integers such that (\ref{cd-Lambda-condition}) holds. If $D_\Lambda(\bQ_p)$ is non-empty for all odd primes $p$ and $p=\infty$  , then $D_\Lambda(\bQ_2)$ is also non-empty.
\end{lemma}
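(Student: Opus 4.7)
The plan is to interpret the local solvability conditions on $D_\Lambda$ at odd places in terms of Hilbert symbols, and then invoke the product formula $\prod_v (x,y)_v = 1$ to transfer information from the odd primes (and $\infty$) to the prime $2$. The key observation is that, because $d_1d_2d_3\in\bQ^{\times 2}$, each of the three quadrics $H_1, H_2, H_3$ is a ternary form, and the standard criterion says that $\alpha x^2 + \beta y^2 + \gamma z^2 = 0$ has a non-trivial $\bQ_p$-point iff $(-\alpha\beta, -\alpha\gamma)_p = 1$. Applied to $H_1, H_2, H_3$, this packages local solvability at a single odd place $p$ into a short list of Hilbert-symbol equations involving $d_1, d_2, d_3$ and $an, bn, 2cn$.

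First, I would record that the casewise conditions in Lemmas 3.1(1), 3.2 and 3.3 at each odd $p$ are exactly equivalent to the triviality of the corresponding Hilbert symbols: for $p\nmid 2abcn$ this is immediate, for $p\mid n$ the Legendre-symbol formulas in Lemma 3.2 unfold into symbols like $(d_i,d_j)_p$ and $(d_i, abcn)_p$, and for $p\mid abc$ the formulas in Lemma 3.3 unfold similarly, using the prime-power structure $a=a_1a_2^2$, $b=b_1b_2^2$, $c=c_1c_2^2$. At $p=\infty$, the condition $d_2>0$ is equivalent to $(-1,d_2)_\infty = 1$, and $d_1d_2d_3\in\bQ^{\times2}$ (together with $d_2>0$) determines the real behavior of $d_1,d_3$. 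Thus, under the hypothesis that $D_\Lambda(\bQ_p)\neq\emptyset$ for every odd $p$ and for $p=\infty$, a finite set of Hilbert-symbol identities of the form $(x,y)_v = 1$ holds for every $v\neq 2$.

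Next, I would apply the product formula $\prod_v(x,y)_v = 1$ to each such Hilbert symbol: if it equals $1$ at every place $v\neq 2$, then it must also equal $1$ at $v=2$. This yields a system of Hilbert-symbol identities $(x,y)_2 = 1$. Using the standard formulas $(u,u')_2 = (-1)^{\frac{u-1}{2}\frac{u'-1}{2}}$ for odd $u,u'$ and $(2,u)_2 = (-1)^{(u^2-1)/8}$, these $2$-adic identities translate directly into congruences for $d_1,d_2,d_3$ modulo $8$, and congruences linking them to $an, bn, 2cn$ modulo $4$ and $8$. A short case analysis on the class of $d_1$ modulo $4$ (and of $an$ modulo $4$), carried out with the standing hypothesis that $a,b,c$ are odd with $a+b=2c$ and $(n,2abc)=1$, checks that the resulting congruences always fall into one of the two disjuncts in Lemma 3.1(4), namely either $4\mid d_1-d_3,\ 8\mid d_1-d_2$ or $4\mid d_1+an,\ 8\mid d_1-d_2+2cn$. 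Invoking the backward direction of Lemma 3.1(4) then produces a point in $D_\Lambda(\bQ_2)$.

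The main obstacle is that Lemma 3.1(4) expresses $2$-adic solvability as a disjunction, reflecting the two geometric modes $u_3=0$ and $t=0$ in the system $H_1=H_2=H_3=0$, whereas the product formula produces a conjunction of Hilbert-symbol identities. The case analysis must show that, in every possible combination of residues of $d_1,d_2,d_3$ modulo $8$ compatible with the Hilbert-symbol conditions, at least one of the two disjuncts is satisfied. The ingredient that makes this work is the constraint $d_1d_2d_3\in\bQ^{\times2}$ together with $a+b=2c$, which forces a parity relation among the residues mod $8$ and rules out the apparent bad case where neither branch would hold. A careful but elementary bookkeeping of these congruences, grouped according to the $2$-adic behaviour of $n$ and of $an$, completes the argument.
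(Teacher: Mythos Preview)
Your approach is correct and is genuinely different from---and cleaner than---the paper's. The paper works entirely with Legendre symbols: it only treats the case where $a,b,c$ are perfect squares, decomposes $n=n_1n_2n_3n_4$ and $d_i$ according to gcd's with $a,b,c,n$, feeds in all the casewise conditions of Lemmas~3.2 and~3.3, and then grinds through quadratic reciprocity to produce three auxiliary identities (the paper's (3.8)--(3.10)) from which the disjunction in Lemma~3.1(4) is extracted by a further case split on $An_3-Bn_2\pmod 4$. Your route bypasses all of this bookkeeping: you only use the much weaker consequence that each ternary quadric $H_i$ is solvable at every $v\neq 2$, transfer the three Hilbert symbols $(bnd_2,d_2d_3)_v$, $(and_3,d_1d_3)_v$, $(-2cnd_1,d_1d_2)_v$ to $v=2$ by the product formula, and then check that these three conditions at $2$ already force one branch of Lemma~3.1(4). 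The case analysis really is short: writing $[x]=\tfrac{x-1}{2}\bmod 2$ and $\{x\}=\tfrac{x^2-1}{8}\bmod 2$, and using $d_3\equiv d_1d_2\pmod 8$, the three symbol conditions reduce to relations among $[d_1],[d_2],[an],[bn],[cn],\{d_1d_2\}$; combined with $an+bn=2cn$ one finds that either $d_1\equiv d_2\pmod 8$ with $d_1\equiv 1\pmod 4$ (branch~(A)), or $d_1\equiv -an\pmod 4$ and $d_1-d_2+2cn\equiv 0\pmod 8$ (branch~(B)), while the residual case $d_1\equiv d_2\equiv 3\pmod 4$ is eliminated outright by $a+b=2c$.

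What your approach buys is uniformity and brevity: it does not need $a,b,c$ to be squares and avoids the long reciprocity computation. What the paper's approach buys is that it never leaves the language of Legendre symbols already set up in Lemmas~3.2--3.3, so no translation to Hilbert symbols is needed; but the price is a page of identities and the restriction to the square case. One point worth making explicit in your write-up: it is not a priori clear that solvability of each $H_i$ individually at $\bQ_2$ forces solvability of the intersection $D_\Lambda$ at $\bQ_2$; that this holds here is exactly the content of your case analysis, and it relies essentially on $d_1d_2d_3\in\bQ^{\times 2}$ together with $a+b=2c$.
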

\begin{proof}
We only prove the case that $a,b$ and $c$ are squares, the general case can be proved similarly but the process is much more complicate. Now we assume that $a, b $ and $c$ are squares. Define $A=(a,d_1)$. Then Lemma \ref{lem-Sel2-abc part} implies that $\Leg{bn}A=\Leg{d_2}A=1$. In addition, $\Leg{2cn}A=1$ for $a+b=2c$. Since $b$ and $c$ are squares, we have
\begin{equation}\label{eq-sel2-2place-A}
\Leg nA=1,\quad \Leg{d_2}A=1, \quad \Leg2A=1.
\end{equation}
Denote by $B=(b,d_2)$ and $C=(c,d_1)=(c,d_2)$. Similarly we have
\begin{equation}\label{eq-sel2-2place-B}
\Leg{-n}B=1,\quad \Leg{d_1}B=1,\quad \Leg2B=1
\end{equation}
and
\begin{equation}\label{eq-sel2-2place-C}
\Leg nC=1,\quad \Leg{d_3}C=1, \quad\Leg{-1}C=1.
\end{equation}
Put $n=n_1n_2n_3n_4$ with $(d_1,n)=n_3n_4$, $(d_2,n)=n_2n_4$ and $n_4=(d_1,d_2,n)$. Then we get that $d_1=ACn_3n_4$, $d_2=BCn_2n_4$ and $d_3=ABn_2n_3$.
By Lemma \ref{lem-Sel2-n part}, we have
\[\begin{array}{cccc}
   &\Leg{AC}{n_1}=\Leg{n_3n_4}{n_1},   &\Leg{BC}{n_1}=\Leg{n_2n_4}{n_1},&\quad  \\
   &\Leg{AC}{n_2}=\Leg{2n_3n_4}{n_2},  &\Leg{BC}{n_2}=\Leg{2n_1n_3}{n_2},&\qquad (*)\\
   &\Leg{AC}{n_3}=\Leg{-2n_1n_2}{n_3},  &\Leg{BC}{n_3}=\Leg{2n_2n_4}{n_3},&\quad\\
   &\Leg{AC}{n_4}=\Leg{-n_1n_2}{n_4},   &\Leg{BC}{n_4}=\Leg{n_1n_3}{n_4}.&\quad
\end{array}\]

From the second identity of equation (\ref{eq-sel2-2place-A}) we have
\[\ALeg{n_2n_4}A=\ALeg{BC}A=\ALeg AB+\ALeg AC+\ALeg{-1}A\ALeg{-1}B\]
by the quadratic reciprocity law and $\Leg{-1}C=1$ of (\ref{eq-sel2-2place-C}).   Via the second identities of (\ref{eq-sel2-2place-B}) and (\ref{eq-sel2-2place-C}), we get
\begin{eqnarray*}
\ALeg{n_2n_4}A&=&\ALeg{n_3n_4}B+\ALeg{n_2n_3}C+\ALeg{-1}A\ALeg{-1}B,  \\
&=&\ALeg B{n_3n_4}+\ALeg C{n_2n_3}+\ALeg{-1}A\ALeg{-1}B+\ALeg{-1}B \ALeg{-1}{n_3n_4}.
\end{eqnarray*}
Here we have used $\Leg{-1}C=1$ and the quadratic reciprocity law. The third,   sixth, seventh  and the last identities of (*) imply that
\begin{eqnarray*}
\ALeg{n_2n_4}A&=&\ALeg{-1}{n_4}+\ALeg2{n_2n_3}+\ALeg A{n_2n_4}+\ALeg{n_2n_3}{n_4}+\ALeg{n_3n_4}{n_2}+  \\
&&\ALeg{n_2n_4}{n_3}+\ALeg{-1}A\ALeg{-1}B+\ALeg{-1}B\ALeg{-1}{n_3n_4}.
\end{eqnarray*}
By the quadratic reciprocity law we get
\begin{eqnarray*}
\ALeg{2}{n_2n_3}+\ALeg{-1}{n_4}&=&\ALeg{-1}A\ALeg{-1}B+\ALeg{-1}A\ALeg{-1}{n_2n_4}+
\ALeg{-1}B\ALeg{-1}{n_3n_4}+  \\
&&\ALeg{-1}{n_2}\ALeg{-1}{n_3}+
\ALeg{-1}{n_2}\ALeg{-1}{n_4}+\ALeg{-1}{n_3}\ALeg{-1}{n_4}.
\end{eqnarray*}
Expanding the residue symbols $\ALeg{-1}{n_in_4}=\ALeg{-1}{n_i}+\ALeg{-1}{n_4}$ for $i=2$ and $3$, we have
\begin{equation}\label{eq:Sel-2part-1 id}
\ALeg{2}{n_2n_3}+\ALeg{-1}{n_4}=\ALeg{-1}{An_3}\ALeg{-1}{Bn_2}+
\ALeg{-1}{n_4}\ALeg{-1}{ABn_2n_3}.
\end{equation}
Similarly, starting from the first identity of (*) we derive
\begin{equation}\label{eq:Sel-2part-2 id}
\ALeg{-1}{n_3}=\ALeg{-1}{An_4}\ALeg{-1}{Bn_1}+\ALeg{-1}{n_3}\ALeg{-1}{ABn_1n_4},
\end{equation}
and starting from the second identity of (*) we obtain
\begin{equation}\label{eq:Sel-2part-3 id}
\ALeg{-1}{Bn_4}=\ALeg{-1}{An_1}\ALeg{-1}{Bn_4}+\ALeg{-1}{n_2}\ALeg{-1}{ABn_1n_4}.
\end{equation}

Now we use the  identities (\ref{eq:Sel-2part-1 id}), (\ref{eq:Sel-2part-2 id}) and (\ref{eq:Sel-2part-3 id}) to prove the lemma. By Lemma \ref{lem-cd-Sel_2}, it suffices to show that either $8\mid d_1-d_2, 4\mid d_1-d_3$ or $8\mid d_1-d_2+2cn, 4\mid d_1+an$. Note that $d_1-d_2=Cn_4(An_3-Bn_2)$ and $d_1-d_2+2cn\equiv Cn_4(An_3-Bn_2+2n_1n_2n_3)\pmod 8$. Moreover, $d_1-d_3\equiv An_3(n_4-Bn_2) \pmod 4$ and $d_1+an\equiv n_3n_4(A+n_1n_2)\pmod 4$. So we reduce to showing that either
$8\mid An_3-Bn_2, 4\mid n_4-Bn_2$ or $8\mid An_3-Bn_2+2n_1n_2n_3, 4\mid A+n_1n_2$.

Since $A, B, n_2$ and $n_3$ are odd, we know that either $4\mid An_3-Bn_2$ or $4\mid An_3-Bn_2+2n_1n_2n_3$. First, we consider the case $4\mid An_3-Bn_2$. Then
\begin{equation}\label{eq:Sel-2part-4 id}
\ALeg{-1}{An_3}=\ALeg{-1}{Bn_2}.
\end{equation}
Substituting this into (\ref{eq:Sel-2part-1 id}) we get
\[\ALeg{2}{n_2n_3}=\ALeg{-1}{An_3n_4}=\ALeg{-1}{Bn_2n_4}.\]
Moreover, substituting (\ref{eq:Sel-2part-4 id}) into (\ref{eq:Sel-2part-2 id}) and (\ref{eq:Sel-2part-3 id}) we have
\begin{eqnarray*}
0&=&\ALeg{-1}{An_3n_4}\ALeg{-1}{Bn_1n_3}=\ALeg{-1}{Bn_2n_4}\ALeg{-1}{Bn_1n_3},  \\
0&=&\ALeg{-1}{ABn_1n_4}\ALeg{-1}{Bn_2n_4}=\ALeg{-1}n\ALeg{-1}{Bn_2n_4}.
\end{eqnarray*}
Adding these we deduce that $\ALeg2{n_2n_3}=\ALeg{-1}{Bn_2n_4}=0$. From the third identities of (\ref{eq-sel2-2place-A}) and (\ref{eq-sel2-2place-B}) we see that $\Leg{2}{An_3}=\Leg2{Bn_2}$. Thus $8\mid An_3-Bn_2$. Since $\ALeg{-1}{Bn_2n_4}=0$, we have $\Leg{-1}{n_4}=\Leg{-1}{Bn_2}$. So $4\mid n_4-Bn_2$.

Finally, we consider the case $4\mid An_3-Bn_2+2n_1n_2n_3$. Then we have
\begin{equation}\label{eq:Sel-2part-5 id}
\ALeg{-1}{ABn_2n_3}=1.
\end{equation}
Substituting this into (\ref{eq:Sel-2part-1 id}) we obtain $\ALeg2{n_2n_3}=0.$
From (\ref{eq:Sel-2part-2 id}) we derive that
\[\ALeg{-1}{ABn_1n_4}\ALeg{-1}{Bn_2n_4}=0.\]
Via (\ref{eq:Sel-2part-3 id}) we obtain
\[\ALeg{-1}{Bn_1n_3}=\ALeg{-1}{ABn_1n_4}\ALeg{-1}{Bn_1n_3}.\]
Adding these two equations we get $\ALeg{-1}{Bn_1n_3}=0$ by noting that $\ALeg{-1}{ABn_1n_4}=1+\ALeg{-1}n$. To prove $8\mid An_3-Bn_2+2n_1n_2n_3$, it suffices to show that $\ALeg{2}{An_3+2n_1n_2n_3}=\ALeg2{Bn_2}$. By the  supplementary law of the quadratic reciprocity law, we have
\[\ALeg{2}{An_3+2n_1n_2n_3}=\ALeg2{An_3}+\ALeg{-1}{An_1n_2}+1=\ALeg{2}{An_3}+
\ALeg{-1}{Bn_1n_3}=\ALeg2{An_3}.\]
Noting that $\ALeg2{n_2n_3}=0$ and  $\ALeg2{A}=\ALeg2B=0$, we have $8\mid An_3-Bn_2+2n_1n_2n_3$. Via $\ALeg{-1}{Bn_1n_3}=0$ and (\ref{eq:Sel-2part-5 id}), we have $\ALeg{-1}{An_1n_2}=1$, namely $4\mid A+n_1n_2$.

This completes the proof of the lemma.
\end{proof}

\subsection{Matrix Representation of $2$-Selmer Group}\quad

From Lemma \ref{lem-cd-Sel_2}, \ref{lem-Sel2-n part}, \ref{lem-Sel2-abc part} and \ref{lem-sel_2-place 2}, there is a matrix representation of the pure $2$-Selmer group $\Sel_2'(\cEn)=\Sel_2(\cEn)/\cEn(\bQ)[2]$. For our purpose, we only give this matrix representation under the condition that $a, b, c$ are squares and $n$ is a positive square-free odd integer such that all prime factors $p$ of $n$ satisfy
\begin{equation}\label{cd-n-prime div cong 1 mod abc}
 \Leg pq=1
\end{equation} with $q$ any prime divisor of $abc$.

To give the matrix representation of $\Sel_2'(\cEn)$, we first introduce some notation. Denote by $n=p_1\cdots p_k$. Assume that $a$, $b$ and $c$ have the prime decompositions \begin{equation}\label{cd-factor-abc}
q_1^{t_1}\cdots q_{k_1}^{t_{k_1}},\quad
q_{k_1+1}^{t_{k_1+1}}\cdots q_{k_2}^{t_{k_2}}\quad {\rm{and}} \quad
q_{k_2+1}^{t_{k_2+1}}\cdots q_{k_3}^{t_{k_3}}
\end{equation} respectively.
Here $k_1\le k_2\le k_3$   are non-negative integers, and all the $t_i$ are positive even integers. Note that every element of $\Sel_2'(\cEn)$ has a unique representative $(d_1,d_2,d_3)$ with $d_i$ positive integers satisfying (\ref{cd-Lambda-condition}). Since $d_1\mid nac$ and $d_2\mid nbc$, we put $d_1=p_1^{x_1}\cdots p_k^{x_k}q_1^{z_1}\cdots q_{k_1}^{z_{k_1}}q_{k_2+1}^{z_{k_2+1}}\cdots q_{k_3}^{z_{k_3}}$ and $d_2=p_1^{y_1}\cdots p_k^{y_k}q_{{k_1+1}}^{w_{{k_1+1}}}\cdots q_{k_3}^{w_{k_3}}$. Here $x, y, z$ and $w$ are row vectors over $\bF_2$ given by $(x_1,\cdots,x_k)$, $(y_1,\cdots,y_k)$, $(z_1,\cdots,z_{k_1},
z_{k_2+1}, \cdots,z_{k_3})$ and  $(w_{k_1+1},\cdots,w_{k_3})$ respectively.

First, we  define the matrix $M_1$  which gives rise to the matrix representation of $\Sel_2'(E)$. To this purpose, we first introduce some $\bF_2$   matrices. Let $F=(f_{ij})_{k_3\times k_3}$ be the matrix defined by $f_{ii}=0$ and $f_{ij}=\ALeg{q_j}{q_i}$ if $i\not=j$. We write $F$ in the following block matrix form
\[F=\left(
      \begin{array}{ccc}
        F_1 &F_2 & F_3 \\
        F_4 &F_5 & F_6 \\
        F_7& F_8 & F_9 \\
      \end{array}
    \right).
\]Here $F_1$ and $F_5$ have sizes $k_1\times k_1$ and $(k_2-k_1)\times(k_2-k_1)$ respectively.
Let $\Delta$ and $\Delta'$ be the diagonal matrices given by
\begin{eqnarray*}
\Delta&=&\diag\brbig{1,\cdots,1},\\
\Delta'&=&\diag\brlr{\ALeg{-1}{q_{k_1+1}},\cdots,\ALeg{-1}{q_{k_2}}},
\end{eqnarray*}where the size of $\Delta$ is $\brlr{k_3-k_2}\times \brlr{k_3-k_2}$.

Now we define $M_1$ to be the $(2k_3-k_1)\times(2k_3-k_2)$ matrix
\[\left(
    \begin{array}{cccc}
        &   & F_2 & F_3 \\
      F_4 & F_6 &   &   \\
      F_7 &   & F_8 &   \\
       &   & \Delta' &    \\
       & \Delta &   & \Delta \\
    \end{array}
  \right).
\]By Lemma \ref{lem-cd-Sel_2},   \ref{lem-Sel2-abc part} and \ref{lem-sel_2-place 2}, we know that the map $(d_1,d_2,d_3)\mapsto (z,w)$ induces an isomorphism
\[\Sel_2'(E)\longrightarrow \Brlr{(z,w):
M_1\left(
     \begin{array}{c}
       z^{\rm T} \\
       w^{\rm T} \\
     \end{array}
   \right)=0,\; z\in \bF_2^{k_1+k_3-k_2}, w\in \bF_2^{k_3-k_1}
 }.\]In fact, this can be verified by block matrices. Taking the first block row of $M_1\left(
      \begin{array}{c}
        z^{\rm T} \\
        w^{\rm T} \\
      \end{array}
    \right)=0
 $, we get $\left(
             \begin{array}{cc}
               F_2 & F_3 \\
             \end{array}
           \right)w^{\rm T}=0
$. This is $\sum_{k_1+1}^{k_3} w_j\ALeg{q_j}{q_i}=1$ for all $i\le k_1$. From this we obtain that $\Leg{d_2}{q_i}=1$ for all $i\le k_1$, which is compatible with the case of $q\mid a$ in Lemma \ref{lem-Sel2-abc part}. The remaining block rows  can be checked similarly. We have to remak on the last block  row of $M_1
\left(
      \begin{array}{c}
        z^{\rm T} \\
        w^{\rm T} \\
      \end{array}
    \right)=0
$. From this we obtain that $\Delta({z'}^{\rm T}+{w'}^{\rm T})=0$ with $z'=(z_{k_2+1},\cdots,z_{k_3})$ and $w'=(w_{k_2+1},\cdots,w_{k_3})$. Then $z_i=w_i$ for $i>k_2$, which is equivalent to $(c,d_3)=1$.\\

Now we use $M_1$ to give the matrix representation of $\Sel_2'(\cEn)$. We first introduce some notation. Let $A=A_n=(a_{ij})_{k\times k}$ be the $\bF_2$ matrix    given by $a_{ii}=\sum_{l\not=i} a_{il}$ and $a_{ij}=\ALeg{p_j}{p_i}$ if $i\not=j$. Denote by $G=(g_{ij})_{k\times k_3}$ the $\bF_2$ matrix defined by $g_{ij}=\ALeg{q_j}{p_i}$. We write $G$ in the block matrix form
\[G=\left(
      \begin{array}{ccc}
        G_1 & G_2 & G_3 \\
      \end{array}
    \right)
\]with the sizes of $G_1$ and $G_2$ being $k\times k_1$ and $k\times(k_2-k_1)$ respectively. Let $\cM_n$  be the Momsky matrix (see Appendix of Heath-Brown \cite{HeathBrownMonsky1993Selmergroup})
$$\Ma{A+D_{-2}}{D_2}{D_2}
{A+D_2},$$
where $D_u=\diag\brlr{\ALeg u{p_1},\cdots,\ALeg u{p_k}}$. We define $M_n$ to be the $\bF_2$ matrix
\[\Ma{\cM_n}{\cG}{}{M_1}\]
with\[\cG=
\left(
  \begin{array}{cccc}
    G_1 & G_3 &  &  \\
     &  & G_2 & G_3 \\
  \end{array}
\right).
\]
\begin{prop}\label{prop-sel2-rep}
Let $a, b, c$ be odd squares with factorization (\ref{cd-factor-abc})  and $n$ a positive square-free integer  satisfying (\ref{cd-n-prime div cong 1 mod abc}). Then the following is an isomorphism
\[\Sel_2'(\cEn) \longrightarrow \ker M_n,\quad (d_1,d_2,d_3)\mapsto (x,y,z,w)^{\rm T},\] where $(d_1,d_2,d_3)$ is   the representative of an element  of $\Sel_2'(\cEn)$ such that (\ref{cd-Lambda-condition}) holds. Here $x=\brbig{v_{p_1}(d_1),\cdots, v_{p_k}(d_1)}, y=\brbig{v_{p_1}(d_2),\cdots,v_{p_k}(d_2)}$, $w=\brbig{v_{q_{k_1+1}}(d_2),\cdots,v_{q_{k_3}}(d_2)}$ and $z=\brbig{v_{q_1}(d_1),\cdots,v_{q_{k_1}}(d_1),v_{q_{k_2+1}}(d_1),\cdots,v_{q_{k_3}}(d_1)}$.
\end{prop}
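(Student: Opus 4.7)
The plan is to show that the map $(d_1,d_2,d_3)\mapsto(x,y,z,w)^{\rm T}$, defined on representatives of $\Sel_2'(\cEn)$ satisfying (\ref{cd-Lambda-condition}), is an $\bF_2$-linear bijection onto $\ker M_n$. Injectivity is immediate, since the representative is unique and the exponent vector recovers $d_1,d_2$ (hence $d_3=d_1d_2/(d_1,d_2)^2$) uniquely. So the task reduces to proving that $\Lambda\in\Sel_2'(\cEn)$ if and only if $M_n(x,y,z,w)^{\rm T}=0$.

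By Lemma \ref{lem-cd-Sel_2}(1) and Lemma \ref{lem-sel_2-place 2}, adelic solvability of $D_\Lambda$ is equivalent to local solvability at $\infty$, at each prime dividing $abc$, and at each $p_i\mid n$. The bottom block $M_1$ of $M_n$ is the matrix already used in the case $n=1$, whose kernel was matched (in the block-row verification immediately after the definition of $M_1$) with the solvability conditions supplied by Lemma \ref{lem-cd-Sel_2}(2) and Lemma \ref{lem-Sel2-abc part}. Since these conditions are independent of the primes $p_i\mid n$, the same verification applies here and the bottom block rows of $M_n$ contribute precisely what is needed at $\infty$ and at the primes dividing $abc$.

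The main content is to verify that the top block $(\cM_n\mid\cG)$ encodes the four-case condition of Lemma \ref{lem-Sel2-n part} at each $p_i\mid n$. Expanding the $i$-th row of the upper block of $\cM_n$ acting on $(x^{\rm T},y^{\rm T})^{\rm T}$ together with the top $k$ rows of $\cG$ acting on $(z^{\rm T},w^{\rm T})^{\rm T}$, and using $a_{ii}=\sum_{l\neq i}\ALeg{p_l}{p_i}$, the row simplifies to a single additive Jacobi symbol involving $d_1$, $d_2$, $\pm 1$ and $\pm 2$ depending on $(x_i,y_i)$. I would then check the four cases $(x_i,y_i)\in\{0,1\}^2$ separately, invoking that $a,b,c$ are squares so the symbols $\ALeg{a}{p_i},\ALeg{b}{p_i},\ALeg{c}{p_i}$ all vanish, and match each case against the corresponding clause of Lemma \ref{lem-Sel2-n part}. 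For instance, when $x_i=y_i=0$ the row collapses to $\ALeg{d_1}{p_i}$ and its counterpart from the $(k+i)$-th row to $\ALeg{d_2}{p_i}$; when $x_i=1,y_i=0$ it becomes $\ALeg{(n/d_1^{(n)})\,d_1^{(q)}(-2)}{p_i}$, which after inserting the square factor $ab$ is $\ALeg{(N/d_1)(-2ab)}{p_i}$ and reproduces $\Leg{N/d_1}{p_i}=\Leg{-2ab}{p_i}$. The remaining two cases are parallel, absorbing instead the symbols $\Leg{2ac}{p_i}$, $\Leg{-bc}{p_i}$ and $\Leg{ac}{p_i}$ appearing in the lemma.

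The main obstacle will be the four-case bookkeeping at each $p_i$, and in particular checking that the distinctive feature of the Monsky matrix---the diagonal $a_{ii}=\sum_{l\neq i}a_{il}$ together with the correction terms $D_{\pm 2}$---is exactly what is required to compress the four conditional statements of Lemma \ref{lem-Sel2-n part} into a single linear equation per row of $(\cM_n\mid\cG)$. Once this is verified at every prime $p_i$, assembling it with the already-established match at $\infty$ and at the primes dividing $abc$ yields the stated isomorphism with $\ker M_n$.
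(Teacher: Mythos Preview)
Your proposal is correct and follows essentially the same route as the paper: split the local solvability into the $abc$-primes (handled by $M_1$) and the $p_i\mid n$ (handled by $(\cM_n\mid\cG)$), use that $a,b,c$ are squares to kill the $a,b,c$ contributions in Lemma~\ref{lem-Sel2-n part}, and then match the four cases $(x_i,y_i)\in\{0,1\}^2$ to the Monsky rows. One small point: when you say the $M_1$ conditions are ``independent of the primes $p_i\mid n$'', this is not literally what Lemma~\ref{lem-Sel2-abc part} says---the conditions there involve $n$ and the $n$-parts of $d_1,d_2$---and you need to invoke hypothesis~(\ref{cd-n-prime div cong 1 mod abc}) explicitly to make those symbols vanish, exactly as the paper does by writing $d_i=d_i'd_i''$ and noting $d_i',n$ drop out at $p\mid abc$. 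The paper also packages your four-case check into the single identity
\[
x_i\sum_{l\neq i}\ALeg{p_l}{p_i}+\sum_{j\neq i}x_j\ALeg{p_j}{p_i}
=x_i\ALeg{n/d_1'}{p_i}+(1-x_i)\ALeg{d_1'}{p_i},
\]
but this is only a stylistic compression of what you propose.
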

For any $(d_1,d_2,d_3)$ above, we put $d_1=d_1'd_1''$ with $d_1'=(d_1,n) $ and $d_1''=(d_1,abc)$. Similarly, we set $d_2=d_2'd_2''$ and $d_3=d_3'd_3''$. Since $n$ satisfies (\ref{cd-n-prime div cong 1 mod abc}), those $d_1', d_2', d_3'$ and $n$ occurred in the local solvability conditions for $p\mid abc$ (Lemma \ref{lem-Sel2-abc part}) vanish. Note that $d_1''$ and $d_2''$ correspond to $z$ and $w$ respectively. Combing these, we have $M_1
\left(
  \begin{array}{c}
    z^{\rm T} \\
    w^{\rm T} \\
  \end{array}
\right)=0
$. From $a, b$ and $c$ being squares,   those $ a, b$ and $c$ occurred in the local solvability conditions for $p\mid n$ (Lemma \ref{lem-Sel2-n part}) also vanish. Note that $d_1'$ and $d_2'$ correspond to $x$ and $y$ respectively. Observe the identity
\[x_i\sum_{l\not=i}\ALeg{p_l}{p_i}+ \sum_{j\not=i}
x_j\ALeg{p_j}{p_i}=x_i\ALeg{n/d_1'}{p_i}+
(1-x_i)\ALeg{d_1'}{p_i}.\]
From Lemma \ref{lem-Sel2-n part}, we get
\[x_i\ALeg{n/d_1'}{p_i}+
(1-x_i)\ALeg{d_1'}{p_i}=\ALeg{d_1''}{p_i}+x_i\ALeg{-2}{p_i}+y_i\ALeg2{p_i}.\]
Similar result also holds for $y$. From these we can derive that $$\cM_n
\left(
  \begin{array}{c}
    x^{\rm T} \\
    y^{\rm T} \\
  \end{array}
\right)+\cG
\left(
  \begin{array}{c}
    z^{\rm T} \\
    w^{\rm T} \\
  \end{array}
\right)=0.
$$ 
So the map is well-defined. Its injectivity is obvious; by Lemma \ref{lem-cd-Sel_2}, \ref{lem-Sel2-n part}, \ref{lem-Sel2-abc part} and \ref{lem-sel_2-place 2}, it is surjective.

\section{Torsion Subgroup}
In this section, we will prove that $\En_\tor(\bQ)\simeq \brbig{\bZ/2\bZ}^2$, where $\En$ is defined by (\ref{eq-En}) with $(a,b,c)$ any positive primitive integer solution to $a^2+b^2=2c^2$. Let $\cE$ be an elliptic curve with full $2$-torsion points. According to Mazur's classification theorem on torsion subgroup of elliptic curves over $\bQ$ (see \cite{SilvermanGTM106}), $\cE_\tor(\bQ)$ is isomorphic to $\bZ/2\bZ\times \bZ/2m\bZ$ for some $m=1,2,3,4$. Ono \cite{Ono} has the following characterization of $\cE_\tor(\bQ).$
\begin{lemma}[Ono]\label{lem-Ono}Let $\cE: y^2=x(x-a)(x+b)$ be an elliptic curve over $\bQ$ with $a,b$ integers. Then $\cE[2](\bQ)\simeq\brbig{\bZ/2\bZ}^2$.
\begin{enumerate}
\item[(1)] $\cE_\tor(\bQ)$ contains a point of order $4$ if and only if one of the three pairs $ [-a,b], [a,a+b]$ and $[-b,-a-b]$ consists of squares of integers.
\item[(2)] $\cE_\tor(\bQ)$ has a point of order $8$ if and only if there exist a positive integer $d$ and pairwise coprime integers $u, v$ and $w$ such that $u^2+v^2=w^2$ and $[d^2u^4,d^2v^4]$ is one of the three pairs in (1).
\item[(3)] $\cE_\tor(\bQ)$ has a point of order $3$ if and only if there exist a positive integer $d$ and pairwise coprime integers $u$ and $v$ such that
    $a=-(u^4+2u^3v)d^2,\;b=(v^4+2v^3u)d^2\quad {\rm{and}}\quad \frac uv\not\in\BrBig{-2,-\half,-1,1,0}.$
\end{enumerate}
\end{lemma}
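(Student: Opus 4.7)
Since this lemma is due to Ono, I would follow the explicit 2-descent framework recalled in \S2.1 together with the classical 3-division polynomial. For part (1), the key observation is that a non-trivial 2-torsion point $T\in\cE(\bQ)[2]$ lies in $2\cE(\bQ)$ if and only if its image under the 2-descent map into $(\bQ^\times/\bQ^{\times2})^3$ is trivial. Taking $a_1=0,\ a_2=a,\ a_3=-b$ in the formulas of \S2.1, the three non-trivial 2-torsion points map respectively to $(-ab,-a,b)$, $(a,a(a+b),a+b)$ and $(-b,-a-b,b(a+b))$. Since the product of the three entries in each triple is automatically a square, triviality in $(\bQ^\times/\bQ^{\times2})^3$ is equivalent to two (hence all) of the entries being rational squares, which gives precisely the three pairs $[-a,b]$, $[a,a+b]$, $[-b,-a-b]$ listed in (1). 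Conversely, once the pair is of the form $[U^2,V^2]$, the genus-one curve $D_\Lambda$ of (\ref{D-Lambda-general}) associated to the trivial class admits an explicit rational point, producing a $P\in\cE(\bQ)$ with $2P=T$.

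For part (2), a rational 8-torsion point is a rational $P$ whose double has order exactly $4$, so one runs a second 2-descent starting from a chosen order-4 point. Assuming without loss of generality that $[-a,b]$ is a pair of squares, write $-a=U^2$ and $b=V^2$; the two order-4 points $Q$ above $(0,0)$ have $x_Q=\pm UV$. The condition $Q\in 2\cE(\bQ)$ becomes that $x_Q$, $x_Q-a$ and $x_Q+b$ are simultaneously squares, i.e.\ that $UV$, $U(U+V)$ and $V(U+V)$ are all squares. Writing $U=s^2,\ V=t^2$ and using that $s^2+t^2$ must then also be a square, one obtains $-a=d^2u^4$ and $b=d^2v^4$ with $(u,v,w)$ a primitive Pythagorean triple and $d$ a positive scaling parameter. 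The three pairs in (1) give the three alternatives of (2), and the converse direction is by direct construction from $(u,v,w,d)$.

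For part (3), I would use the 3-division polynomial $\psi_3(x)=3x^4+4(b-a)x^3-6abx^2-a^2b^2$ of $\cE$: a non-trivial rational 3-torsion point exists if and only if $\psi_3$ has a rational root $x_0$ for which $x_0(x_0-a)(x_0+b)$ is a rational square. Parametrizing rational solutions of this joint system by a ratio $u/v$ together with a scaling $d$, a direct calculation produces the form $a=-(u^4+2u^3v)d^2$, $b=(v^4+2v^3u)d^2$. The excluded ratios $u/v\in\{-2,-\tfrac12,-1,1,0\}$ are exactly those forcing $a=0$, $b=0$ or $a+b=0$, i.e.\ those for which $\cE$ becomes singular.

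The main obstacle is the algebraic bookkeeping in part (2): one has to track the three possible pairs from (1), the two candidate order-4 points above each associated 2-torsion point, and the Pythagorean-triple normalization simultaneously, and verify that every solubility constraint reduces cleanly to the form $[d^2u^4,d^2v^4]$. Parts (1) and (3) are essentially direct computations once one commits to the 2-descent identification and to the 3-division polynomial respectively.
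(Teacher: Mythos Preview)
The paper does not prove this lemma at all: it is stated with attribution to Ono and cited from \cite{Ono}, then used as a black box in the proof of Proposition~\ref{prop-E-torsion}. So there is no ``paper's own proof'' to compare against.

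That said, your outline is a sound way to recover Ono's result. Part~(1) is exactly right: a rational point of order~$4$ exists if and only if some nonzero $T\in\cE(\bQ)[2]$ lies in $2\cE(\bQ)$, which by injectivity of the Kummer map $\cE(\bQ)/2\cE(\bQ)\hookrightarrow(\bQ^\times/\bQ^{\times2})^3$ is equivalent to the image triple being trivial, and your computation of the three triples matches \S2.1. (For the converse you do not need to invoke $D_\Lambda$; once the image of $T$ is trivial, $T\in 2\cE(\bQ)$ automatically, and any preimage has exact order~$4$.) Your sketch for~(2) via a second halving is the standard route and the Pythagorean normalization is where the real work sits, as you note. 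For~(3), the $3$-division polynomial approach is correct in spirit; the actual derivation of the $(u,v,d)$-parametrization is a somewhat lengthy elimination, and the excluded values of $u/v$ are indeed those that collapse $\cE$ to a singular cubic.
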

\begin{prop}\label{prop-E-torsion}For any square-free integer $n$,
$\En(\bQ)\simeq \brbig{\bZ/2\bZ}^2$.
\end{prop}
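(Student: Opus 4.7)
The plan is to combine Mazur's classification of torsion subgroups with Ono's criteria (Lemma \ref{lem-Ono}) to rule out every torsion subgroup strictly larger than $(\bZ/2\bZ)^2$. By Mazur, $\En_\tor(\bQ)\simeq\bZ/2\bZ\times\bZ/2m\bZ$ for some $m\in\{1,2,3,4\}$, so I only need to exclude a rational point of order $3$ and one of order $4$ (the latter automatically also excludes order $8$ via Lemma \ref{lem-Ono}(2)). I will use throughout that any positive primitive integer solution $(a,b,c)$ of $a^2+b^2=2c^2$ forces $a,b,c$ all odd (immediate from reducing $a^2+b^2=2c^2$ modulo $4$ and $8$), so in particular $v_2(a^2 n)=v_2(b^2 n)=v_2(n)\le 1$.

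The exclusion of order $4$ is a direct application of Lemma \ref{lem-Ono}(1) to the defining equation $\En: y^2=x(x-a^2n)(x+b^2n)$. I would check each of the three pairs $[-a^2n,\,b^2n]$, $[a^2n,\,2c^2n]$, and $[-b^2n,\,-2c^2n]$; since $n$ is a nonzero square-free integer, each entry is a perfect square only for a unique forced value of $n$ in $\{\pm 1,\pm 2\}$, and in every pair the two forced values are mutually incompatible (e.g.\ the first pair needs $n=-1$ and $n=1$ simultaneously). So no order-$4$ rational point exists.

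The heart of the argument is the order-$3$ exclusion. Lemma \ref{lem-Ono}(3) would supply coprime $u,v\in\bZ$ and $d\in\bZ_{>0}$ with
\[
a^2 n=-u^{3}(u+2v)d^{2},\qquad b^2 n=v^{3}(v+2u)d^{2}.
\]
I plan to derive a contradiction by splitting on the parity of $u$ and $v$. If both are odd, then $a^2,\,d^2,\,u^3(u+2v),\,v^3(v+2u)$ are all odd, and reducing each equation modulo $4$ gives $n\equiv -u(u+2v)\equiv -(1+2uv)\equiv 1\pmod 4$ from the first and $n\equiv v(v+2u)\equiv 1+2uv\equiv 3\pmod 4$ from the second, a contradiction. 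If exactly one of $u,v$ is even (coprimality rules out both), say $u$ is even, then $u^3$ is divisible by $8$ and $u+2v$ is even, so $v_2\bigl(u^3(u+2v)d^2\bigr)\ge 4$, whereas $v_2(a^2 n)\le 1$; the case $v$ even is symmetric using the $b$-equation.

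The main (modest) obstacle is the $2$-adic bookkeeping in the order-$3$ case; once the parity cases are laid out the contradictions are essentially immediate, and the restriction $u/v\notin\{-2,-\tfrac12,-1,1,0\}$ in Lemma \ref{lem-Ono}(3) is not even needed. The order-$4$ case and the overall reduction are entirely formal from Mazur and Ono.
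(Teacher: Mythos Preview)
Your proposal is correct. The reduction via Mazur and the order-$4$ exclusion match the paper's argument exactly. For the order-$3$ exclusion, however, you take a genuinely different and shorter route: you split on the parity of $u$ and $v$ and finish with a $2$-adic valuation count (one of $u,v$ even) or a mod-$4$ comparison (both odd), whereas the paper first forces $d=1$ via $\gcd(a^2n,b^2n)=|n|$, then computes $\gcd(u^4+2u^3v,\,v^4+2v^3u)=(3,u-v)$ to strip the common factor, and finally derives a contradiction by solving the reduced system explicitly and comparing mod $8$. Your argument is more elementary and avoids the gcd gymnastics entirely; the paper's approach, on the other hand, never needs to argue separately that $d$ is odd in the both-odd case (a point you use implicitly---it follows at once from $v_2(a^2n)\le 1$ and $u^3(u+2v)$ odd, and you should state it). Either way the result is the same, but your parity argument is the cleaner of the two.
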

\begin{proof}
From the definition of $\En$, we see that $\En_\tor(\bQ)$ contains a subgroup isomorphic to $\brbig{\bZ/2\bZ}^2$. The proposition will be proved by showing that $\En_\tor(\bQ)$ contains no point of order $4$ and $3$. Here we have used Mazur's
classification theorem on torsion subgroup of elliptic curves over $\bQ$.

Now we show that $\En_\tor(\bQ)$ has no point of order $4$. Note that none of the three pairs $[-a^2n, b^2n], [a^2n, a^2n+b^2n=2c^2n]$ and $[-b^2n, -a^2n-b^2n]$ consists of squares of integers. So (1) of Lemma \ref{lem-Ono} implies that $\En_\tor(\bQ)$ contains no point of order $4$.

So we remain to prove that $\En_\tor(\bQ)$ contains no point of order $3$. By (3) of Lemma \ref{lem-Ono}, it suffices to show that the equation
\begin{equation*}
\begin{cases}
d^2u^3(u+2v)=&-a^2n,\\
d^2v^3(v+2u)=&b^2n
\end{cases}\qquad (*)
\end{equation*}with $d$ a positive integer and $u,v$ pairwise coprime integers is not solvable. We will divide the proof of this into several steps.

First, as $a$ and $b$ are coprime integers, we get that the greatest common divisor  $(-a^2n, b^2n)$ of $-a^2n$ and $b^2n$ is $|n|$, which is square-free. From equation (*), we get that $d^2\mid (-a^2n, b^2n)=|n|$. So $d=1$ by noting that $d$ is a positive integer.

Second, we claim that $|n|=(u^4+2u^3v, v^4+2v^3u)=(3,u-v)$. Since $(u,v)=1$, we have $(u,v+2u)=1$. Thus $(u^3,v^3(v+2u))=1$. Similarly, $(u+2v,v^3)=1$. Therefore, $(u^4+2u^3v, v^4+2v^3u)$ equals to
\[(u+2v,v^3(v+2u))=(u+2v,v+2u)=(u+2v,u-v)=(u-v,3u)=(3,u-v).\]
So $n$ divides $u+2v$ and $v+2u$. Therefore, the equation (*) is reduced to
\begin{equation*}
\begin{cases}
v^3\cdot \frac{v+2u}n=b^2, &\qquad (*1)\\
-u^3\cdot \frac{u+2v}n=a^2.& \qquad (*2)
\end{cases}
\end{equation*}

Next, we assume that $v>0$ to solve the equations (*1) and (*2).
Since $v$ is coprime to $v+2u$, from (*1) we derive that there are integers $b_1$ and $ b_2$ which divide $b$ and satisfy  \[v^3=b_1^2,\quad  {v+2u}=nb_2^2.\]
The first equation implies that $b_1=b_3^3$ for some $b_3\mid b$. Since $b$ is odd, there are odd integers $b_2$ and $b_3$ such that
\begin{equation}\label{eq-tor1}
v=b_3^2, \quad v+2u=nb_2^2.
\end{equation}
Now we claim that $u<0$. Otherwise  $u>0$,   from $v+2u=nb_2^2$ we get $n>0$ and
$$na^2=-u^3\cdot(u+2v)<0$$
by (*2). Then we have $a^2<0$, which is impossible. Hence, we get $u<0$. Like (*1), from (*2) we know that there are odd integers $a_2$ and $a_3$ such that
\begin{equation*}\label{eq-tor2}
u=-a_3^2,\quad u+2v=na_2^2.
\end{equation*}
Combing this with equation (\ref{eq-tor1}), we derive that there are odd integers $a_2, a_3, b_2$ and $b_3$ such that
\[b_3^2-2a_3^2=nb_2^2,\quad 2b_3^2-a_3^2=na_2^2.\]
Viewing these equations as congruences modulo $8$, we see that $n\equiv1\pmod 8$ and $n\equiv-1\pmod 8$ respectively. These can't be true at the same time. So the   equations (*1) and (*2) are not solvable if $v>0$.

Finally, the equations (*1) and (*2) are not solvable for $v<0$. This can be proved similarly as the case $v>0$.

Therefore, $\En_\tor(\bQ)$ contains no point of order $3$. This finishes the proof of the proposition.
\end{proof}

\section{Non-trivial Shafarevich-Tate Group}
In this section, we always assume that $(a,b,c)$ is a  positive primitive  integer solution to $a^2+b^2=2c^2$ and $n=p_1\cdots p_k\equiv1\pmod8$ is a positive square-free integer  satisfying $\Leg{p_i}q=1$ for any $1\le i\le k$ and prime divisor $q$ of $abc$. Note that $\cE=\cE_{a,b}=E_{a^2,b^2}$ and $\En(\bQ)[2]$ consists of
\[(2,2n,n),\; (-2n,2,-n),\; (-n,n,-1) \;{\rm and}\; (1,1,1).\] We  assume that $\Sel_2(\cE)$ has dimension two.

\subsection{Proof of Theorem \ref{thm-ab odd square-p +-1 mod 8-introd}}\quad

In this subsection, we always assume that all prime divisors $p_i$ of $n$ are congruent to $\pm1$ modulo $8$. Then the Monsky  matrix $\cM_n$ is of the form
\[\diag\brbig{A+ D_{-1},\; A}.\]
\begin{lemma}\label{lem-pm1 mod 8}
Assume that all prime divisors $p_i$ of $n$ are congruent to $\pm1$ modulo $8$. Let $x=(x_1,\cdots,x_k)^{\rm T}$ and $x_0=(1,\cdots,1)^{\rm T}$ be two column vectors in $\bF_2^k$. Denote by $d=\prod_{i=1}^kp_i^{x_i}$.
\begin{enumerate}
\item[(1)] $Ax=0$ if and only if $x^{\rm T}(A+D_{-1})=0$.
\item[(2)] If $(A+D_{-1})x=0$, then $x^{\rm T}A=0$ if $d\equiv1\pmod8$ and $(x_0-x)^{\rm T}A=0$ if $d\equiv-1\pmod8$.
\item[(3)] The dimension of the pure $2$-Selmer group $\Sel_2'(\En)$ is two if and only if $h_4(n)=1$. If this is satisfied, then $\Sel_2'(\En)$ is generated by
 $$ \Lambda=(2,2,1),\quad \Lambda'=(d,1,d).$$
\end{enumerate}
\end{lemma}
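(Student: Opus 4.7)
My plan is to exploit the hypothesis $p_i\equiv\pm1\pmod8$, which forces $\ALeg{2}{p_i}=0$, so $D_2=0$ and $D_{-2}=D_{-1}$, collapsing the Monsky matrix to $\cM_n=\diag(A+D_{-1},A)$. Set $v=(v_1,\ldots,v_k)^{\rm T}$ with $v_i=\ALeg{-1}{p_i}$, so $D_{-1}=\diag(v)$. The additive quadratic reciprocity relation $\ALeg{p_i}{p_j}+\ALeg{p_j}{p_i}=v_iv_j$ for $i\neq j$, combined with $A_{ii}=\sum_{l\neq i}\ALeg{p_l}{p_i}$, yields directly
\[
A+A^{\rm T}=vv^{\rm T}+D_{-1},\qquad (A+D_{-1})^{\rm T}=A+vv^{\rm T}.
\]
Two facts I will use throughout: $Ax_0=0$ (each row of $A$ sums to $0$ over $\bF_2$) and $x_0^{\rm T}v=0$ (equivalent to $n\equiv1\pmod8$).

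For (1), the claim $Ax=0\iff(A+D_{-1})^{\rm T}x=0$ is the coincidence $\ker A=\ker(A+vv^{\rm T})$, and since the latter is a rank-one perturbation of the former this amounts to showing $v\perp\ker A$. I will identify $\ker A$ via Gauss genus theory. Applying Lemma \ref{lem-gauss} to $K=\bQ(\sqrt{-n})$ (fundamental discriminant $-4n$, prime divisors $p_1,\ldots,p_k,2$), the R\'edei matrix reduces to $R=(A\,|\,0)$ because $\ALeg{2}{p_i}=0$, and an element $x\in\ker A$ then corresponds to an odd divisor $d=\prod p_i^{x_i}$ of $n$ that lies in $N_{K/\bQ}(K^\times)$, equivalently $dw^2=u^2+nv'^2$ is solvable in coprime integers. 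A short parity analysis modulo $4$, using $n\equiv1\pmod4$, forces $w$ odd and then $d\equiv1\pmod4$; translating $d\equiv1\pmod4$ to the $v_i$ gives $v^{\rm T}x=0$.

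For (2), the relation $(A+D_{-1})x=0$ says $Ax=D_{-1}x$. Substituting into $A^{\rm T}=A+vv^{\rm T}+D_{-1}$ gives $A^{\rm T}x=(v^{\rm T}x)v$. Since $v^{\rm T}x$ equals the number of $p_i\mid d$ with $p_i\equiv-1\pmod8$ modulo $2$, we have $v^{\rm T}x=0\iff d\equiv1\pmod8$, and in that case $x^{\rm T}A=A^{\rm T}x=0$. In the opposite case $d\equiv-1\pmod8$, I will verify $A^{\rm T}x_0=v$ by expanding the $i$-th column sum of $A$ and pairing entries using $A_{ij}+A_{ji}=v_iv_j$, where $\sum_lv_l=0$ makes the off-diagonal contribution collapse to $v_i$; then $A^{\rm T}(x_0-x)=A^{\rm T}x_0-A^{\rm T}x=v-v=0$, so $(x_0-x)^{\rm T}A=0$.

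For (3), Proposition \ref{prop-sel2-rep} together with the hypothesis $\dim\Sel_2(\cE)=2$ (equivalently $\ker M_1=0$) and the block-triangular shape of $M_n$ force the components $(z,w)$ of any kernel vector to vanish, whence $\Sel_2'(\En)\cong\ker(A+D_{-1})\oplus\ker A$. By (1) and the R\'edei description above, $\dim\ker(A+D_{-1})=\dim\ker A=h_4(n)$, so $\dim\Sel_2'(\En)=2h_4(n)$ and the equivalence with $h_4(n)=1$ is immediate. When $h_4(n)=1$, $\ker A=\bF_2x_0$ and $\ker(A+D_{-1})=\bF_2x$ with $x$ the unique non-zero element; setting $d=\prod p_i^{x_i}$, I read off $(x,y)=(x,0)$ as the Selmer class $(d,1,d)$, and $(x,y)=(0,x_0)$ as $(1,n,n)$, which becomes $(2,2,1)$ after multiplying by the $2$-torsion class $(a^2n,0)\leftrightarrow(2,2n,n)$ and reducing modulo squares. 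The most delicate step, and the one I expect to require the most care, is the parity argument in (1): pinning down that any $d\mid n$ in the norm image satisfies $d\equiv1\pmod4$ uses the full strength of $n\equiv1\pmod8$, not merely $n\equiv1\pmod4$.
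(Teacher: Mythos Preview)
Your coordinate-free packaging via the identity $A+A^{\rm T}=vv^{\rm T}+D_{-1}$ is cleaner than the paper's block decomposition, and parts (2) and (3) go through exactly as you describe. However, there is a genuine gap in your argument for (1).

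The assertion ``since the latter is a rank-one perturbation of the former this amounts to showing $v\perp\ker A$'' is false. The condition $v\perp\ker A$ only yields the inclusion $\ker A\subseteq\ker(A+vv^{\rm T})$; it does not give the reverse inclusion. Concretely, over $\bF_2$ take $A=\begin{pmatrix}1&1\\1&1\end{pmatrix}$ and $v=(1,1)^{\rm T}$: then $v\perp\ker A$, yet $A+vv^{\rm T}=0$ has strictly larger kernel. So your Gauss genus detour, even granting the parity argument, establishes only half of (1).

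The fix is already in your hands: the identity $A^{\rm T}x_0=v$ that you prove in (2), together with $x_0^{\rm T}v=0$, gives both directions at once. Indeed, $v^{\rm T}x=(A^{\rm T}x_0)^{\rm T}x=x_0^{\rm T}Ax$ for every $x$. If $Ax=0$ then $v^{\rm T}x=0$ and hence $(A+vv^{\rm T})x=0$. Conversely, if $(A+vv^{\rm T})x=0$ then $Ax=(v^{\rm T}x)v$, so $v^{\rm T}x=x_0^{\rm T}Ax=(v^{\rm T}x)(x_0^{\rm T}v)=0$, whence $Ax=0$. This is, stripped of block notation, exactly what the paper does: it computes $x_0^{\rm T}A=(0,w_0^{\rm T})$ (the block form of $x_0^{\rm T}A=v^{\rm T}$) and then uses $x_0^{\rm T}Ax=0$ to force the parity of $w$. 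Once you reorganize (1) this way, the genus-theory argument and its mod-$4$ parity analysis become unnecessary, and the remark at the end about $n\equiv1\pmod 8$ being needed for the parity step is moot: that hypothesis enters only through $x_0^{\rm T}v=0$.
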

\begin{proof}
For notational simplicity, we assume that $p_1\equiv\cdots\equiv p_l\equiv1\pmod8$ and $p_{l+1}\equiv\cdots\equiv p_k\equiv-1\pmod 8$. From $n\equiv1\pmod8$ we derive that $k-l$ is even. In addition, we can divide the matrices $A, D_{-1}, x$ and $x_0$ into block matrices
\[A=\Ma{A_1}{A_2}{A_3}{A_4},\quad D_{-1}=\Ma{0}{}{}I,\quad x=
\left(
  \begin{array}{c}
    z \\
    w \\
  \end{array}
\right),\quad
x_0=\left(
  \begin{array}{c}
    z_0 \\
    w_0 \\
  \end{array}
\right).
\]Here the sizes of $A_4$ and $I=\diag(1,\cdots,1)$ are $(k-l)\times(k-l)$, and the sizes of $z$ and $z_0$ are $l\times1$. We hope that the matrices $A_1, A_2, A_3$ and  $A_4$ are not confused with $A_n$ when $n=1, 2, 3$ and $4$. From the definition of $A$ and $D_{-1}$, we obtain that
\begin{equation}\label{matrix-property-A-D}
A_1^{\rm T}=A_1,\;A_2^{\rm T}=A_3 \quad {\rm and}\quad  A_4^{\rm T}+A_4+I=E.
\end{equation}
Here $E$ is a $(k-l)\times(k-l)$ matrix with all components being $1$, and we have used the quadratic reciprocity law. Since $k-l$ is even, we get $Ew_0=0$.\\

(1) If $Ax=0$, then by the block forms of $A$ and $x$ we get that $A_1z+A_2w=0$ and $A_3z+A_4w=0$. While
\[x^{\rm T}(A+D_{-1})=
\left(
  \begin{array}{cc}
    z^{\rm T} & w^{\rm T} \\
  \end{array}
\right)
\Ma{A_1}{A_2}{A_3}{A_4+I}=
\left(
  \begin{array}{cc}
    z^{\rm T}A_1+w^{\rm T}A_3 & z^{\rm T}A_2+w^{\rm T}(A_4+I)  \\
  \end{array}
\right).
\]
Note that $(z^{\rm T}A_1+w^{\rm T}A_3)^{\rm T}=
A_1z+A_2w=0$ and $\Sqbig{z^{\rm T}A_2+w^{\rm T}(A_4+I)}^{\rm T}
=A_3z+(A_4^{\rm T}+I)w$. We claim that \emph{ $w$ has even many non-zero components.}
From  this claim we deduce that $Ew=0$, namely $(A_4^{\rm T}+I)w=A_4w$ by (\ref{matrix-property-A-D}). Therefore,
\[\Sqbig{z^{\rm T}A_2+w^{\rm T}(A_4+I)}^{\rm T}=A_3z+A_4w=0.\]
Thus $x^{\rm T}(A+D_{-1})=0$. Moreover, the above process is invertible.

So the proof of (1) is reduced to proving the claim. From the definition of $A$, we know that $Ax_0=0$, namely
\begin{equation}\label{matrix-Ax_0=0}
A_1z_0+A_2w_0=0\quad {\rm and}\quad A_3z_0+A_4w_0=0.
\end{equation}
Like the expansion of $x^{\rm T}(A+D_{-1})$, we get $x_0^{\rm T}A=
\left(
  \begin{array}{cc}
    0 & w_0^{\rm T} \\
  \end{array}
\right).
$ Therefore, $0=x_0^{\rm T}Ax=
\left(
  \begin{array}{cc}
    0 & w_0^{\rm T} \\
  \end{array}
\right)x=w_0^{\rm T} w$, which is equivalent to the claim.\\

(2) If $d\equiv1\pmod8$, then $w$ also has even many non-zero components and the proof is the same as that of (1). We assume that $d\equiv-1\pmod 8$, namely $w$ has odd many non-zero components. Then
\begin{equation}\label{matrix-Ew=w_0}
Ew=w_0=Iw_0.
\end{equation}
Denote by $\barz=z_0-z$ and $\barw=w_0-w$. So $(x_0-x)^{\rm T}=
\left(
  \begin{array}{cc}
    \barz^{\rm T} & \barw^{\rm T}  \\
  \end{array}
\right)$. From $(A+D_{-1})x=0$ we obtain that $A_1z+A_2w=0$ and $A_3z+(A_4+I)w=0$.
Observe that
\[(x_0-x)^{\rm T}A=
\left(
  \begin{array}{cc}
    \barz^{\rm T} A_1+\barw^{\rm T}A_3 &\barz^{\rm T}A_2+\barw^{\rm T}A_4 \\
  \end{array}
\right).
\]But we have
$\brbig{\barz^{\rm T} A_1+\barw^{\rm T}A_3}^{\rm T}=A_1\barz+A_2\barw=
A_1z_0+A_2w_0-(A_1z+A_2w)=0$ by (\ref{matrix-Ax_0=0}). Moreover,
\begin{eqnarray*}
\brbig{\barz^{\rm T}A_2+\barw^{\rm T}A_4}^{\rm T}&=&
A_3\barz+A_4^{\rm T}\barw=A_3z_0+A_4^{\rm T}w_0+ A_3z+A_4^{\rm T}w \\
&=&A_3z_0+(A_4+I)w_0+A_3z+(A_4+I)w+Ew  \\
&=& Iw_0+Ew=0.
\end{eqnarray*}
Here we have used (\ref{matrix-Ax_0=0}) and (\ref{matrix-Ew=w_0}). Consequently, $(x_0-x)^{\rm T}A=0$.\\

(3) By Proposition \ref{prop-sel2-rep}, to find all the elements of $\Sel_2'(\En)$ is equivalent to compute the kernel of $M_n$. This is equivalent to find all $(X,Y,Z,W)$ such that
$$\cM_n\left(
           \begin{array}{c}
             X \\
             Y \\
           \end{array}
         \right)
+\cG\left(
           \begin{array}{c}
             Z \\
             W \\
           \end{array}
         \right)=0 \quad
{\rm and} \quad M_1\left(
           \begin{array}{c}
             Z\\
             W \\
           \end{array}
         \right)=0.$$
Here $X, Y\in \bF_2^k$, $Z\in\bF_2^{k_1+k_3-k_2}$ and $W\in\bF_2^{k_3-k_1}$. Since $\Sel_2(\cE)$ has dimension two, we know
that $\ker M_1=\Brlr{0}$ by Proposition \ref{prop-sel2-rep}.
This implies that $Z$ and $W$ are zero vectors. Thus we  reduce to finding $X$ and $Y$ in $\bF_2^k $ such that $\cM_n\left(
           \begin{array}{c}
             X \\
             Y \\
           \end{array}
         \right)=0$.
Then $\dim_{\bF_2}\Sel_2'(\En)=2k-\rank\cM_n$.

From (1) we derive that $\rank A=\rank (A+D_{-1})$. Hence, $\rank \cM_n=2\rank A$. But the R\'edei matrix $R$ of $\bQ(\sqrt{-n})$ takes the form $
\left(
  \begin{array}{cc}
    A & 0 \\
  \end{array}
\right)$. Therefore, $\rank R=\rank A=k-h_4(n)$. Thus $h_4(n)=1$ if and only if $\rank\cM_n=2k-2$. Hence, the dimension of $\Sel_2'(\En)$ is two if and only if $h_4(n)=1$.

Now we assume that  $h_4(n)=1$ to find representatives of $\Sel_2'(\En)$. From above argument, it suffices to find $X, Y\in\bF_2^k$ such that
$\cM_n\left(
        \begin{array}{c}
          X \\
          Y \\
        \end{array}
      \right)=0
$, namely $(A+D_{-1})X=0$ and $AY=0$.
As the rank of $A$ is $k-1$ and $Ax_0=0$, we get $Y=0$ or $x_0$. Similarly, $X=0$ or $x$. Hence, $\Sel_2'(\En)$ is generated by $(1,n,n)$ and $(d,1,d)$. Noting that $\Lambda=(2,2,1)=(1,n,n)(2,2n,n)$, the proof of (3) is complete.

This completes the proof of the lemma.
\end{proof}

Now we can prove Theorem \ref{thm-ab odd square-p +-1 mod 8-introd}.\\
\emph{Proof of Theorem \ref{thm-ab odd square-p +-1 mod 8-introd}.}
From the exact sequence (\ref{sel-2 condition}), we see that the a necessary condition for (1) is $\dim_{\bF_2}\Sel_2'(\En)=2$. This is equivalent to $h_4(n)=1$ by Lemma \ref{lem-pm1 mod 8}. Now we assume this and use the notation of Lemma \ref{lem-pm1 mod 8}. Since  $h_4(n)=1$, from (3) of Lemma  \ref{lem-pm1 mod 8} we know that $\Sel_2'(\En)$ is generated by $\Lambda=(2,2,1)$ and $\Lambda'=(d,1,d)$.

First, we compute the Cassels pairing $\Ba\Lambda{\Lambda'}$.  The genus one curve $D_\Lambda$ is
\[\begin{cases}
H_1:& -b^2nt^2+2u_2^2-u_3^2=0,\\
H_2:& -a^2nt^2+u_3^2-2u_1^2=0,\\
H_3:&c^2nt^2+u_1^2-u_2^2=0.
\end{cases}\]
From the definition of the Cassels pairing, we have to choose points on $H_i(\bQ)$. For $H_3$ we choose $Q_3=(0,1,1)$. So the corresponding tangent linear form $L_3$ of $H_3$ at $Q_3$ is \[L_3: u_1-u_2.\]
For $H_1$ we will use Gauss genus theory to select a point. As $
R\left(
   \begin{array}{c}
     0 \\
     1 \\
   \end{array}
 \right)=
\left(
   \begin{array}{cc}
     A&0 \\
   \end{array}
 \right)
\left(
   \begin{array}{c}
     0 \\
     1 \\
   \end{array}
 \right)=0$, by Gauss genus theory (Lemma \ref{lem-gauss}) $2$ is a norm element, namely there is a positive primitive integer solution $(\alpha,\beta,\gamma)$ to
\begin{equation}\label{root-pm1-n}
\alpha^2+n\beta^2=2\gamma^2.
\end{equation}
Let $Q_1=(\beta,\gamma b,\alpha b)$. Then $Q_1$ lies in $H_1(\bQ)$ and the corresponding tangent linear form is
\[L_1: \beta bnt-2\gamma u_2+\alpha u_3.\]
So by Lemma \ref{lem-Cas}, $\Ba\Lambda{\Lambda'}$ equals to
\[\prod_{p\mid 2nabc\infty} \brBig{L_1L_3(P_p),d}_p.\]
Here $P_p$ is any point on $D_\Lambda(\bQ_p)$ such that $L_i(P_p)$ is non-vanishing.
For any prime divisor $p$ of $abc$,  we have  $\Leg{p_i}p=1$; so we have $\Leg dp=1$. The local Cassels pairing is trivial at all $p\mid abc$. In addition, it is also trivial at $p=\infty$ by $d>0$. So we only need to compute it at $p=2$ and $p\mid n$.

For $p\mid n$, we choose the local point $(t,u_1,u_2,u_3)$ such that
\[t=0, u_1=1, u_2=-1, u_3^2=2.\]
As $(2\gamma+\alpha u_3)(2\gamma-\alpha u_3)=4\gamma^2-2\alpha^2=2n\beta^2$, we choose $u_3$ such that $p\mid 2\gamma-\alpha u_3$. Then $p\nmid 2\gamma+\alpha u_3$ by the primitivity of $(\alpha,\beta,\gamma)$. So
\[\brBig{L_1L_3(P_p),d}_p=\brBig{2(2\gamma+\alpha u_3),d}_p=\Leg\gamma p^{\delta}.\]
Here $\delta$ is $1$ if $p\mid d$ and $0$ otherwise.

For $p=2$, we note that the local Cassels pairing is trivial if $d\equiv1\pmod8$. So we only need to consider the case $d\equiv-1\pmod 8$. Let $(t,u_1,u_2,u_3)$ be the local point satisfying\[t=1, u_1=0, u_2^2=c^2n, u_3^2=a^2n.\]
We observe that
\begin{eqnarray*}
(\beta bn+\alpha u_3)(\beta bn-\alpha u_3)&=&\beta^2b^2n^2-\alpha^2a^2n
  =b^2n(2\gamma^2-\alpha^2)-\alpha^2a^2n\\
&=&2\gamma^2 b^2n-2\alpha^2c^2n=2b^2n\brBig{\gamma^2-\alpha^2\frac{c^2}{b^2}}
\equiv0\pmod{16}.
\end{eqnarray*}
So we may choose $u_3$ such that $8\mid \beta bn+\alpha u_3$. Then
\[\brBig{L_1L_3(P_p),d}_2=\brBig{-u_2(\beta bn-2\gamma u_2+\alpha u_3),-1}_2
=\brBig{2\gamma u_2^2,-1}_2=\Leg{-1}\gamma.\]
So we get $$\Ba\Lambda{\Lambda'}=\Leg\gamma d \;{\rm or}\; \Leg\gamma d\Leg{-1}\gamma$$
according to   $d\equiv1\pmod 8$ or not.

Next, we show that (1) is equivalent to $h_4(n)=1$ and the non-degeneracy of the Cassels pairing on $\Sel_2'(\En)$. From the following short exact sequence $$0\longrightarrow \En[2]\longrightarrow \En[4]\oset{\times2}\longrightarrow \En[2]\longrightarrow0,$$
we obtain the derived long exact sequence
\[0\longrightarrow \En(\bQ)[2]/2\En(\bQ)[4]\longrightarrow
\Sel_2(\En)\longrightarrow\Sel_4(\En)\longrightarrow\Im\Sel_4(\En)\longrightarrow0.\]
By Proposition \ref{prop-E-torsion},  we see that (1) is equivalent to $\dim_{\bF_2}\Sel_2'(\En)=2$ and $\#\Sel_2(\En)=\#\Sel_4(\En)$, which are equivalent to $\dim_{\bF_2}\Sel_2'(\En)=2$ and $\#\Im\Sel_4(\En)=4$ by the long exact sequence. Noting that the kernel of the Cassels pairing on $\Sel_2'(\En)$ is $\Im\Sel_4(\En)/\En(\bQ)[2]$, we infer that (1) is equivalent to $h_4(n)=1$ and the non-degeneracy of the Cassels pairing on $\Sel_2'(\En)$ by Lemma \ref{lem-pm1 mod 8}.

Finally, we  connect the Cassels pairing with $h_8(n)$. We first assume that $d\equiv1\pmod 8$. As $2\mid 2n$ is a norm element satisfying (\ref{root-pm1-n}),  $h_8(n)=1$ if and only if $W\in \Im R$ with $W=\brBig{\ALeg \gamma{p_1},\cdots,\ALeg\gamma{p_k}}^{\rm T}$ by Lemma \ref{lem-gauss-h-8}. Via (2) of Lemma \ref{lem-pm1 mod 8}, we see that $x^{\rm T}A=0$. As $A$ has rank $k-1$ and $R=
\left(
  \begin{array}{cc}
    A & 0 \\
  \end{array}
\right)
$, we obtain that $W\in \Im R$ if and only if $x^{\rm T}W=0$, namely $\Leg \gamma d=1$. Hence, the Cassels pairing is non-degenerate  if and only if $h_8(n)=0$ provided that $d\equiv1 \pmod 8$.

Now we assume that $d\equiv-1\pmod8$. Then $(x_0-x)^{\rm T}A=0$ by (2) of Lemma \ref{lem-pm1 mod 8}. Like the above case,  $h_8(n)=1$ if and only if $(x_0-x)^{\rm T}W=0$, namely $\Leg{\gamma}{n/d}=1$. Viewing (\ref{root-pm1-n}) as a congruence modulo $\gamma$, we have $\Leg{-n}\gamma=1$. Since $n\equiv1\pmod 8$, we have $\Leg{-1}\gamma=\Leg \gamma n$ from the quadratic reciprocity law. So the Cassels pairing $\Ba\Lambda{\Lambda'}$ is $\Leg\gamma{n/d}$ in this case. Hence, the Cassels pairing is non-degenerate if and only if $h_8(n)=0$.

In summary, we have proved that (1) is equivalent to (2). This completes the proof of the theorem.
\qed

\subsection{Proof of Theorem \ref{thm-ab odd square-p 1 mod 4-introd}}
\begin{lemma}\label{lem-p  1 mod4}Assume that all prime divisors of $n$ are congruent to $1$ modulo $4$. Then the dimension of the pure $2$-Selmer group $\Sel_2'(\En)$   is two if and only if $h_4(n)=1$.
\end{lemma}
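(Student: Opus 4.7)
The plan is to adapt the argument of Lemma~\ref{lem-pm1 mod 8}(3), making allowance for two effects of the weaker hypothesis $p_i\equiv 1\pmod 4$: the Monsky matrix is no longer block diagonal, and the R\'edei matrix of $\bQ(\sqrt{-n})$ acquires a possibly non-trivial ``$2$-column.'' First, $p_i\equiv 1\pmod 4$ gives $\ALeg{-1}{p_i}=0$, so $D_{-1}=0$ and $D_{-2}=D_2$, whence
$$\cM_n=\begin{pmatrix} A+D_2 & D_2\\ D_2 & A+D_2\end{pmatrix}.$$
By quadratic reciprocity $A$ is symmetric, $x_0:=(1,\ldots,1)^T$ always lies in $\ker A$, and $n\equiv 1\pmod 8$ forces $x_0^T D_2 x_0=\sum_i \ALeg{2}{p_i}=0$. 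As in the proof of Theorem~\ref{thm-ab odd square-p +-1 mod 8-introd}, the hypothesis $\dim\Sel_2(\cE)=2$ together with Proposition~\ref{prop-sel2-rep} forces $\ker M_1=0$, reducing the claim to $\dim\ker\cM_n=2\iff h_4(n)=1$.

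The substitution $u:=x+y$ converts $\cM_n(x,y)^T=0$ into $u\in\ker A$ together with $Ax=D_2 u$. Since $A$ is symmetric, $\Im A=(\ker A)^\perp$, so the second equation is solvable iff $D_2 u\in\Im A$, and then the solutions form a coset of $\ker A$ with $y=x+u$. Thus
$$\dim\ker\cM_n=\dim\ker A+\dim\ker B,$$
where $B(u,v):=u^T D_2 v$ is the symmetric form on $\ker A$ and $\ker B=\{u\in\ker A:D_2 u\in\Im A\}$. On the Gauss side, since $n\equiv 1\pmod 4$ gives $\disc\bQ(\sqrt{-n})=-4n$, the R\'edei matrix has the shape $R=(A\,|\,D_2 x_0)$ of size $k\times(k+1)$. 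Hence $h_4(n)=k-\rank R$ equals $\dim\ker A$ or $\dim\ker A-1$ according as $D_2 x_0\in\Im A$ or not.

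The conclusion follows from a case analysis on $\dim\ker A\ge 1$. If $\dim\ker A=1$ then $\ker A=\langle x_0\rangle$ and the identity $x_0^T D_2 x_0=0$ automatically places $D_2 x_0\in\Im A$; thus $\ker B=\ker A$, $h_4(n)=1$, and $\dim\ker\cM_n=2$. If $\dim\ker A=2$, write $\ker A=\langle x_0,w\rangle$; then $\gamma:=B(x_0,w)$ controls everything because $B(x_0,x_0)=0$. When $\gamma=1$, a short direct computation in the basis $\{x_0,w\}$ gives $\ker B=\{0\}$, $D_2 x_0\notin\Im A$, $h_4(n)=1$, and $\dim\ker\cM_n=2$; when $\gamma=0$ one has $D_2 x_0\in\Im A$, $h_4(n)=2$, and $\dim\ker B\ge 1$, so $\dim\ker\cM_n\ge 3$. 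Finally, $\dim\ker A\ge 3$ trivially yields $\dim\ker\cM_n\ge 3$. The main obstacle is the case $\dim\ker A=2$, where one must carefully pair the two sub-cases $\gamma\in\{0,1\}$ with the two possibilities for $h_4(n)\in\{1,2\}$ via the augmented R\'edei matrix; once this dictionary is set up, the equivalence follows in all cases.
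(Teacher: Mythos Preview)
Your proof is correct and takes a genuinely different route from the paper's. The paper performs explicit block row/column operations on $\cM_n$: adding the first block row to the second and the second block column to the first brings $\cM_n$ to $\left(\begin{smallmatrix}A&D_2\\0&A\end{smallmatrix}\right)$, and a further reduction yields a block upper-triangular matrix $\left(\begin{smallmatrix}R_k&*\\0&R_1^{\mathrm T}\end{smallmatrix}\right)$ (where $R_i$ is $R$ with its $i$-th row deleted). Using that the column sums of $R$ vanish, one gets $\rank R_i=\rank R$, hence the sandwich $2\rank R\le\rank\cM_n\le(k-1)+\rank R$, from which $\rank\cM_n=2k-2\iff\rank R=k-1$ is immediate. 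Your approach instead parametrizes $\ker\cM_n$ directly via the substitution $u=x+y$, obtaining the exact formula $\dim\ker\cM_n=\dim\ker A+\dim\{u\in\ker A:D_2u\in\Im A\}$ and recasting the second term as the radical of the bilinear form $B(u,v)=u^{\mathrm T}D_2v$ on $\ker A$. The paper's method is quicker and avoids case analysis; your method is more structural, gives a sharper identity rather than an inequality, and dovetails naturally with the subsequent Lemma~\ref{lem-p 1 mod 4 rep of Sel}, whose two cases $\rank A=k-1$ and $\rank A=k-2$ are exactly your first two cases.

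One small point to tighten: in the case $\dim\ker A\ge 3$ you only state $\dim\ker\cM_n\ge 3$; for the equivalence you should also record that $h_4(n)=k-\rank(A\,|\,D_2x_0)\ge k-(\rank A+1)=\dim\ker A-1\ge 2$, so neither side of the biconditional holds. This is trivial but should be said.
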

\begin{proof}Like (3) of Lemma \ref{lem-pm1 mod 8},
it suffices  to show that $\rank\cM_n=2k-2$ if and only if $h_4(n)=1$.  Note that the Monsky matrix takes the form
\[\cM_n=\Ma{A+D_{2}}{D_{2}}{D_{2}}{A+D_{2}}.\]
Here $D_{2}=\diag\brBig{\ALeg2{p_1},\cdots,\ALeg2{p_k}}$. To relate $\cM_n$ to the R\'edei matrix $R$ of $n$, we perform some elementary linear transforms on the block matrix $\cM_n$. Adding the first block row to the second, we have
\[\Ma{A+D_{2}}{D_{2}}{A}{A}.\]
Then adding the second block column to the first, we get
\[\Ma{A}{D_{2}}{}{A}.\]
Summating all the last $(k-1)$ columns to the $(k+1)$-th column, we derive
\[\Ma R{D_{2}'}{}{A'}.\]
Here $D_{2}'$ and $A'$ denote the matrices obtained from $D_{2}$ and $A$ respectively by deleting their first columns. Adding all the first $(k-1)$ rows to the $k$-th row and then moving the $k$-th row as the last row, we yield
\begin{equation}\label{matrix-Rk R1^T}
\Ma{R_k}*{}{R_1^{\rm T}}.
\end{equation}
Here $R_i$ is the matrix obtained from the R\'edei matrix $R$ by deleting its $i$-th row. Since every $p_i$ is congruent to $1$ modulo $4$ and $n$ is congruent to $1$ modulo $8$, we see that the column sum of $R$ is zero, namely the sum of any of $R$'s given column is zero. Thus\[\rank R_i=\rank R=k-h_4(n).\]From  this and (\ref{matrix-Rk R1^T}) we get
\[2\rank R\le \rank \cM_n\le k-1+\rank R.\]
Therefore, $\rank R=k-1$ if and only if $\rank\cM_n=2k-2$. Then the lemma follows from $\rank R=k-h_4(n)$.
\end{proof}

\begin{lemma}\label{lem-p 1 mod 4 rep of Sel}Assume that all prime divisors of $n$
are congruent to $1$ modulo $4$ and $h_4(n)=1$.
\begin{enumerate}
\item[(1)]  If the rank of $A$ is
$k-2$, then $\Sel_2'(\En)$ is generated by $(n,n,1)$ and $(d,d,1)$,
where $d=\prod_1^k p_i^{x_i}$ with $x=(x_1,\cdots,x_k)^{\rm T}\not=0, x_0=(1,\cdots,1)^{\rm T}\in\bF_2^k$ satisfying $Ax=0$.
\item[(2)] If the rank of $A$ is $k-1$, then $\Sel_2'(\En)$ is generated by
$(n,n,1)$ and $(d,n/d,n)$,
where $d=\prod_1^k p_i^{x_i}$  with $x=(x_1,\cdots,x_k)^{\rm T}$ and $\fb=\brBig{\ALeg2{p_1},\cdots,\ALeg2{p_k}}^{\rm T}$ satisfying $Ax=\fb$.
\end{enumerate}

\end{lemma}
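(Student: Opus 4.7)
The plan is to use Proposition \ref{prop-sel2-rep}, which identifies $\Sel_2'(\En)$ with $\ker M_n$, together with the observation that the standing hypothesis $\dim_{\bF_2}\Sel_2(\cE)=2$ forces $\ker M_1=0$ (apply Proposition \ref{prop-sel2-rep} with $n=1$ and subtract the $2$-dimensional torsion given by Proposition \ref{prop-E-torsion}). This kills the $(z,w)$-block of any vector in $\ker M_n$ and reduces the task to computing
\[
\ker\cM_n=\Brlr{(X,Y)\in\bF_2^k\oplus\bF_2^k \;\big|\;(A+D_2)X+D_2Y=0,\;D_2X+(A+D_2)Y=0}.
\]
Setting $u=X+Y$, the system is equivalent to $Au=0$, $AX=D_2u$, $Y=X+u$. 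Thus the solutions are parametrised by pairs $(u,X)$ with $u\in\ker A$ and $X\in A^{-1}(D_2u)$.

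The next step is to invoke the symmetry $A=A^{\rm T}$ (valid because every $p_i\equiv 1\pmod 4$, so the off-diagonal entries are symmetric under quadratic reciprocity); this gives $\Im A=(\ker A)^\perp$, so the solvability condition $D_2u\in\Im A$ amounts to $v^{\rm T}D_2u=0$ for all $v\in\ker A$. Two structural facts drive the case split: the all-ones vector $x_0$ lies in $\ker A$ by the row-sum identity, and the R\'edei matrix of $\bQ(\sqrt{-n})$ has the shape $R=(A\mid\fb)$ with $\fb=\brBig{\ALeg{2}{p_1},\ldots,\ALeg{2}{p_k}}^{\rm T}$. Because $n\equiv 1\pmod 8$ yields $x_0^{\rm T}\fb=\ALeg{2}{n}=0$, and because $h_4(n)=1$ gives $\rank R=k-1$ (as in the proof of Lemma \ref{lem-p  1 mod4}), one must have $\rank A\in\{k-2,k-1\}$, the dichotomy being precisely $\fb\notin\Im A$ versus $\fb\in\Im A$.

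For part (1), $\rank A=k-2$, so $\fb\notin\Im A$; choose a basis $\{x_0,x\}$ of $\ker A$. Since $x_0^{\rm T}\fb=0$ while $\fb$ is not in $(\ker A)^\perp$, it forces $x^{\rm T}\fb=1$, that is $\ALeg{2}{d}=1$. A direct pairing check then shows that for each non-zero $u\in\{x_0,x,x_0+x\}$ one has $x_0^{\rm T}D_2u=1$ or $x^{\rm T}D_2u=1$, so only $u=0$ is admissible; the four solutions are $(X,X)$ with $X\in\ker A$, translating to the triples $(1,1,1),(n,n,1),(d,d,1),(n/d,n/d,1)$. For part (2), $\rank A=k-1$ and $\fb\in\Im A$; pick any $x$ with $Ax=\fb$. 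The admissible $u$'s are $0$ and $x_0$: the former produces $(0,0),(x_0,x_0)$ (the triples $(1,1,1),(n,n,1)$), and the latter, using $D_2x_0=\fb\in\Im A$, produces $(x,x+x_0),(x+x_0,x)$, whose triples are $(d,n/d,n)$ and $(n/d,d,n)$ once the third coordinate is forced by $d_1d_2d_3\in\bQ^{\times 2}$. In each part, the Klein four-group structure picks out the claimed pair of generators.

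The main obstacle is the bookkeeping around the fibre condition $D_2u\in\Im A$: one must run through every $u\in\ker A$ and pair it against every element of $\ker A$ via $D_2$, and the outcome pivots sensitively on the interplay between $\fb$, $\rank A$, and the symbol $\ALeg{2}{d}$. The secondary obstacle is converting back from $(X,Y)$ to a triple $(d_1,d_2,d_3)$: this is routine except one must keep straight that the third coordinate is dictated by the product constraint $d_1d_2d_3\in\bQ^{\times 2}$, which is exactly what turns $(X,Y)=(x,x+x_0)$ in part (2) into the triple $(d,n/d,n)$ rather than $(d,n/d,1)$.
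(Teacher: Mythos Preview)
Your proof is correct and follows essentially the same route as the paper: reduce to $\ker\cM_n$ via Proposition \ref{prop-sel2-rep} and $\ker M_1=0$, set $u=X+Y$, obtain $Au=0$ and $AX=D_2u$, and then branch on $\rank A$. The only cosmetic difference is that in case (1) the paper does not bother to rule out $u\neq 0$ directly; instead it observes that $u=0$ already yields four solutions $(X,X)$ with $X\in\ker A$, and since Lemma \ref{lem-p  1 mod4} gives $\dim\Sel_2'(\En)=2$ these are all of them. Your explicit check that $D_2u\notin\Im A$ for each nonzero $u\in\ker A$ (via the pairing computation $x_0^{\rm T}D_2x=x^{\rm T}\fb=1$) is an equally valid way to close the case and is a bit more self-contained.
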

\begin{proof}Like (3) of Lemma \ref{lem-pm1 mod 8},  we reduce to finding $X$ and $Y$ in $\bF_2^k$ such that
$\cM_n\left(
           \begin{array}{c}
             X \\
             Y \\
           \end{array}
         \right)=0$,
namely
\begin{equation}\label{solve-YZ-p 1 mod 4}
\begin{cases}
 A X+D_{2}(X+Y)&=0,\\
A Y+D_{2}(X+Y)&=0.
\end{cases}
\end{equation}
Adding these two equations, we get $A(X+Y)=0$. So $X+Y$ lies in $\ker A$.
According to $\rank A_n$, we can divide this into two cases.

First, we deal with the case $\rank A=k-1$. Then $\ker A=\Brbig{ 0, x_0 }$. If  $X+Y=0$, then $X=Y\in \ker A$ by (\ref{solve-YZ-p 1 mod 4}). These give rise to two elements $X=Y=0$ or $X=Y=x_0$. So $(1,1,1)$ and $(n,n,1)$ lie in $\Sel_2'(\En)$. If $X+Y=x_0$, then (\ref{solve-YZ-p 1 mod 4}) implies that \[A X=D_{2}x_0=\fb.\]
In fact, $\fb$ is indeed in the image of $A$. From Gauss genus theory, we know that
\[\rank
\left(
  \begin{array}{cc}
    A & \fb \\
  \end{array}
\right)
=k-h_4(n)=k-1=\rank A.\]
Let $x$ be such that $Ax=\fb$. Then $X=x$ or $x_0-x$. So these give rise to the remaining two elements $(d,n/d,n)$ and $(n/d,d,n)$ of $\Sel_2'(\En)$.

Finally, we consider the case $\rank A=k-2$. If $X+Y=0$, then $X=Y\in \ker A$ by (\ref{solve-YZ-p 1 mod 4}).  Thus there are four elements in $\Sel_2'(\En)$ given by
\[(n,n,1),\;(d,d,1),\;(n/d,n/d,1)\;{\rm and}\; (1,1,1)\]with $d$ defined in the lemma. This completes the proof of the lemma.
\end{proof}

Now we can prove Theorem \ref{thm-ab odd square-p 1 mod 4-introd}.\\
\emph{Proof of Theorem \ref{thm-ab odd square-p 1 mod 4-introd}.} Like the proof of Theorem \ref{thm-ab odd square-p +-1 mod 8-introd}, a necessary condition for (1) is $h_4(n)=1$. So we may assume that  $h_4(n)=1$. Then there are two cases according to the rank of $A$.

First, we consider the case $\rank A=k-2$. We use the notation defined in (1) of  Lemma \ref{lem-p 1 mod 4 rep of Sel}.
Then we have
$R\left(
    \begin{array}{c}
      x \\
      0 \\
    \end{array}
  \right)=
\left(
  \begin{array}{cc}
    A & \fb \\
  \end{array}
\right)
\left(
    \begin{array}{c}
      x \\
      0 \\
    \end{array}
  \right)=Ax=0
$. Here $\fb=\brlr{\ALeg2{p_1},\cdots,\ALeg2{p_k}}^{\rm T}$. So by Gauss genus theory (Lemma \ref{lem-gauss} and   the epimorphism $\theta$ in \S2.4),
$d_0=\prod_1^k p_i^{x_i}$ corresponds to the non-trivial element of $2\cA\cap\cA[2]$. So $d=d_0=\prod_1^k p_i^{x_i}$. We claim that $d\equiv 5\pmod8$. Since $h_4(n)=1$, we know that
$\rank \left(
  \begin{array}{cc}
    A & \fb \\
  \end{array}
\right)=k-1$ by Gauss genus theory. While $\rank A=k-2$, we see that $\fb$ is not in the image of $A$. As $A$ is symmetric and $\ker A$ is generated by $x_0$ and $x$, a column vector $y\in\bF_2^k$ is in the image of $A$ if and only if $x_0^{\rm T}y=x^{\rm T}y=0$.
For the vector $\fb$, we have $x_0^{\rm T}\fb=\ALeg2n=0$
for $n\equiv1\pmod 8$. Thus $x^{\rm T}\fb\not=0$. Note that $x^{\rm T}\fb=\ALeg2d$. Therefore, $d$ is congruent to $5$ modulo $8$.

By Lemma  \ref{lem-p  1 mod4},   the pure $2$-Selmer group $\Sel_2'(\En)$ has dimension two. Moreover, via Lemma \ref{lem-p 1 mod 4 rep of Sel} it is generated by
\[\Lambda=(d,d,1),\; \Lambda'=(-1,1,-1).\]
Note that $\Lambda'=(n,n,1)(-n,n,-1)$. Now we compute the Cassels pairing
$\Ba\Lambda{\Lambda'}.$
For $\Lambda=(d,d,1)$, the genus one curve $D_\Lambda$ associated to $\Lambda$ is given by\[\begin{cases}
H_1:& -b^2nt^2+du_2^2-u_3^2=0,\\
H_2:& -a^2nt^2+u_3^2-du_1^2=0,\\
H_3:&2c^2nt^2+du_1^2-du_2^2=0.
\end{cases}\]
By Cassels  pairing, we have to choose global points on $H_i$. For $H_3$, we choose the global point $Q_3=(0,1,1)\in H_3(\bQ)$. Then the corresponding tangent linear form $L_3$ of $H_3$ at $Q_3$ is  \[L_3: u_1-u_2.\]
Now we are ready to choose a point on $H_1(\bQ)$. Since $d$ corresponds to the non-trivial element of $2\cA\cap \cA[2]$, Gauss genus theory implies that $d$ is a norm element, namely there is a positive primitive integer solution $(\alpha,\beta,\gamma)$ to
\begin{equation}\label{root-h_4(n) p 1 mod 4}
d\alpha^2+d'\beta^2=\gamma^2
\end{equation}with $dd'=n$. We may assume that $\alpha$ is even. If not,   $\alpha$ is odd and $\beta$ is even. We can get a new solution
\[(\bar\alpha,\bar\beta,\bar\gamma)=\brBig{ d'\alpha-2d'\beta-d\alpha, d\beta-2d\alpha-d'\beta, (d+d')\gamma}\] to (\ref{root-h_4(n) p 1 mod 4}).
Thus $4\mid \bar \alpha$ and $2\parallel \bar\beta$. Dividing the solution $(|\bar\alpha|,|\bar\beta|,|\bar\gamma|)$ by $\gcd(\bar\alpha,\bar\beta,\bar\gamma)$, we derive a positive primitive integer solution to (\ref{root-h_4(n) p 1 mod 4}) with corresponding $\alpha$ being even.
From (\ref{root-h_4(n) p 1 mod 4}), we know that $(\beta,\gamma  b,d\alpha  b)$ lies in $H_1(\bQ)$ with the corresponding tangent linear form
\[L_1: d'\beta  b t-\gamma u_2+\alpha u_3.\]

By Lemma \ref{lem-Cas}, the Cassels pairing $\Ba\Lambda{\Lambda'}$ equals to
\[\prod_{p|2nabc\infty} \brBig{L_1L_3(P_p),-1}_p.\]
Here   $P_p$ is any point of   $D_\Lambda(\bQ_p)$ such that $L_i(P_p)$ is non-vanishing.
Since any prime divisor $p$ of $n$ is congruent to $1$ modulo $4$, we know that  the local Cassels pairing is trivial at $p\mid n$. This pairing is also trivial at $p\mid c$ for $-a^2=b^2-2c^2\equiv b^2\pmod p$. Thus we reduce to computing the local Cassels pairing at $p\mid 2ab\infty$.

For $p=\infty$, we choose the local point $P_\infty=(t,u_1,u_2,u_3)=(0,1,-1,\sqrt d)$.
Then
\[\brBig{L_1L_3(P_\infty),-1}_\infty=\brBig{2(\gamma+\alpha\sqrt d),-1}_\infty=1.\]
For $p=2$, we choose the local point $P_2=(t,u_1,u_2,u_3)$ such that
\[t=2,\;u_2=1,\;u_1^2=1+8c^2d',\;u_3^2=d-4b^2n.\]
Here we used the fact that $d\equiv5\pmod8$. As $u_1^2=1+8c^2d'\equiv9\pmod{16}$, we may assume that $u_1\equiv 3\pmod 8$. Since $\alpha$ is even, we have $2\parallel \alpha$ by (\ref{root-h_4(n) p 1 mod 4}).  Let $u_3$ be any square root of $d-4b^2n$. Then $2\mid d'\beta b+\frac\alpha2 u_3$. The local Cassels pairing at $p=2$ is
\begin{eqnarray*}
\brBig{L_1L_3(P_2),-1}_2&=&\brBig{(u_1-1)(2d'\beta b+\alpha u_3-\gamma),-1}_2  \\
&=&(2,-1)_2 \brBig{2\brbig{d'\beta b+\frac\alpha2 u_3}-\gamma,-1}_2  \\
&=&(-\gamma,-1)_2=-\Leg{-1}\gamma.
\end{eqnarray*}
For $p\mid ab$, we choose the local solution
\[t=0,\;u_1=1,\;u_2=-1,u_3^2=d.\]
By (\ref{root-h_4(n) p 1 mod 4}), $(\gamma+\alpha u_3)(\gamma-\alpha u_3)=\gamma^2-\alpha^2 u_3^2=d'\beta^2$. If $p\mid \beta$, then we choose $u_3$ such that $p\mid\gamma-\alpha u_3$; so $p\nmid \gamma+\alpha u_3$. If $p\nmid \beta$, we choose $u_3$ to be any square root of $d$ and  have $p\nmid \gamma-\alpha u_3$. Thus in any case $\gamma+\alpha u_3$  is a $p$-adic unit.
So the local pairing is
\[\brBig{L_1L_3(P_p),-1}_p=\brbig{2(\gamma+\alpha u_3),-1}_p=1.\]
Therefore, the Cassels pairing  $\Ba\Lambda{\Lambda'}$ is \[\Ba\Lambda{\Lambda'}=-\Leg{-1}\gamma.\]
Like Theorem \ref{thm-ab odd square-p +-1 mod 8-introd}, (1) is equivalent to $h_4(n)=1$ and  the non-degeneracy of the Cassels pairing.

Now we claim that $\Leg{-1}\gamma=1$ if and only if $h_8(n)=1$ provided that $h_4(n)=1$. As $d=d_0\mid 2n$ corresponds to the non-trivial element of $2\cA\cap \cA[2]$ and $(d\alpha)^2+n\beta^2=d\gamma^2$ by (\ref{root-h_4(n) p 1 mod 4}), we see that $h_8(n)=1$ if and only if $W\in \Im R$ by Lemma \ref{lem-gauss-h-8}. Here $W=\brlr{\ALeg \gamma{p_1},\cdots,\ALeg\gamma{p_k}}^{\rm T}$ and $R$ is the R\'edei matrix $
\left(
  \begin{array}{cc}
    A & \fb \\
  \end{array}
\right)
$. Since the rank of $R$ is $k-1$ and $x_0^{\rm T}R=0$,   a vector $z$ is in the image of $R$ if and only if $x_0^{\rm T}z=0$. We have $x_0^{\rm T}W=\ALeg\gamma n$. Consequently, $h_8(n)=1$ if and only if $\Leg\gamma n=1$. Viewing (\ref{root-h_4(n) p 1 mod 4}) as a congruence modulo $\gamma$, we see that $\Leg{-n}\gamma=1$. By $n\equiv1\pmod4$ and the quadratic reciprocity law, we have $\Leg{-1}\gamma=\Leg\gamma n$. Therefore, $h_8(n)=1$ if and only if $\Leg{-1}\gamma=1$.

So in  the case $\rank A=k-2$, (1) is equivalent to (2) by noting that $d\equiv5\pmod8$.\\

Now we consider the case that $\rank A=k-1$. We use the notation of (2) of Lemma \ref{lem-p 1 mod 4 rep of Sel}. Note that
$R\left(
    \begin{array}{c}
      x \\
      1 \\
    \end{array}
  \right)=
\left(
  \begin{array}{cc}
    A & \fb \\
  \end{array}
\right)
\left(
    \begin{array}{c}
      x \\
      1 \\
    \end{array}
  \right)=Ax+\fb=0
$. By Gauss genus theory, $d_0=2\prod_1^k p_i^{x_i}$ corresponds to the non-trivial element of $2\cA\cap\cA[2]$. Then $d=\prod_1^k p_i^{x_i}$. From Lemma \ref{lem-p 1 mod 4 rep of Sel} we derive that $\Sel_2'(\En)$ is generated by
\[\Lambda=(2d,2d,1),\quad \Lambda'=(-1,1,-1).\]
Here we used the fact that $\Lambda=(d,n/d,n)(2,2n,n)$ and $\Lambda'=(-n,n,-1)(n,n,1)$.

Now we begin to compute the Cassels pairing $\Ba\Lambda{\Lambda'}$. The genus one curve $D_\Lambda$ is defined by
\[\begin{cases}
H_1:& -b^2nt^2+2du_2^2-u_3^2=0,\\
H_2:& -a^2nt^2+u_3^2-2du_1^2=0,\\
H_3:&c^2d't^2+u_1^2-u_2^2=0.
\end{cases}\] Here $d'=n/d$.
According to the definition of the Cassels pairing, we first choose global points $Q_i$ on $H_i(\bQ)$. Let $Q_3=(0,1,1)$. Then $Q_3$ lies in $H_3(\bQ)$ and the corresponding tangent linear form $L_3$ is
\[L_3:   u_1-u_2.\]
Since $2d$ corresponds to the non-trivial element of $2\cA\cap\cA[2]$, by Gauss genus theory there is a positive primitive integer solution $(\alpha,\beta,\gamma)$ to
\begin{equation}\label{root-2d-rank k-1 case}
d\alpha^2+d'\beta^2=2\gamma^2.
\end{equation}
Then $Q_1=(\beta,\gamma b,\alpha bd)$ lies in $H_1(\bQ)$. In addition, the tangent linear form $L_1$ of $H_1$ at $Q_1$ is
\[L_1: d'\beta  bt-2\gamma u_2+\alpha u_3.\]

Like the case $\rank A=k-2$, the Cassels pairing $\Ba\Lambda{\Lambda'}$ equals to
\[\prod_{p\mid 2ab\infty}\brBig{L_1L_3(P_p),-1}_p\]
with $P_p$ any point on $D_\Lambda(\bQ_p)$ such that   $ L_i(P_p)$ is non-vanishing.
For $p=\infty$, we choose the local point $(t,u_1,u_2,u_3)=(0,1,-1,\sqrt{2d})$. Then
\[\brBig{L_1L_3(P_\infty),-1}_\infty=\brBig{2 (\alpha\sqrt{2d}+2\gamma),-1}_\infty=1.\]
For $p=2$, we choose a point $P_2=(t,u_1,u_2,u_3)$ such that
\[t=1, u_1=2\ALeg2d, u_2^2=c^2d'+u_1^2, u_3^2=a^2n+2du_1^2\]
with $\gamma u_2\equiv 1\pmod 4$. Note that
\begin{eqnarray*}
&&(d'\beta b+\alpha   u_3)(d'\beta  b-\alpha  u_3)={d'}^2\beta^2b^2-\alpha^2(a^2n+2du_1^2)  \\
&=&d'b^2(2\gamma^2-d\alpha^2)-\alpha^2(a^2n+2du_1^2)=2d'b^2\SqBig{\gamma^2-\alpha^2
\frac1{2d'b^2}\brlr{2c^2n+2du_1^2}}  \\
&=&2d'b^2\SqBig{\gamma^2-\alpha^2\brBig{\frac {c^2}{b^2}d+ \frac{d}{b^2d'}u_1^2}}\equiv0\pmod{16}.
\end{eqnarray*}
So we may choose $u_3$ such that $8\mid d'\beta  b+\alpha   u_3$. We have
\begin{eqnarray*}
\brBig{L_1L_3(P_2),-1}_2&=&\brBig{(u_1-u_2)(d'\beta  b+\alpha   u_3-2\gamma   u_2),-1}_2  \\
&=&\brBig{-2\gamma u_2(u_1 -u_2),-1}_2  \\
&=&(2,-1)_2(-\gamma ,-1)_2
(u_1\gamma -1,-1)_2  \\
&=&\Leg{-1}{\gamma }\Leg2d.
\end{eqnarray*}
For $p\mid a$, we put $P_p=(t,u_1,u_2,u_3) $ with
\[t=1, u_1=0, u_2^2=c^2{d'}, u_3=a\sqrt n.\] Observe that
\begin{eqnarray*}
&&(d'\beta  b-2\gamma u_2)(d'\beta b+2\gamma u_2)
={d'}^2\beta^2b^2-4\gamma^2 c^2d'  \\
&\equiv&2d'c^2(d'\beta^2-2\gamma^2)\equiv -2c^2n\alpha^2\pmod p.
\end{eqnarray*}
If $p\mid \alpha$, we choose $u_2$ such that $p\mid d'\beta  b+2\gamma u_2$; in addition, $p\nmid d'\beta  b-2\gamma u_2$, otherwise $p\mid\beta$ which contradicts that $(\alpha,\beta,\gamma)$ is a positive primitive integer solution to (\ref{root-2d-rank k-1 case}). If $p\nmid\alpha$, we have $p\nmid d'\beta  b\pm2\gamma u_2$. So we can always choose $u_2$ such that $p\nmid d'\beta  b-2\gamma u_2$. Since $p\mid a$, we get
\begin{eqnarray*}
\brBig{L_1L_3(P_p),-1}_p&=&\brBig{-u_2( d'\beta  b-2\gamma u_2+\alpha u_3),-1}_p  \\
&=&\brbig{ d'\beta b-2\gamma u_2,-1}_p=1.
\end{eqnarray*}
Similarly, for $p\mid b$, we have
\[\brBig{L_1L_3(P_p),-1}_p=1.\]
In summary, we have
\[\Ba\Lambda{\Lambda'}=\Leg{-1}{\gamma}\Leg2d.\]

Like the case $\rank A=k-2$, (1) is equivalent to $h_4(n)=1$ and $\Ba\Lambda{\Lambda'}=-1$, and Gauss genus theory implies that $\Leg{-1}\gamma=1$ if and only if $h_8(n)=1$ provided that $h_4(n)=1$. Therefore, (1) is equivalent to $h_4(n)=1$ and $h_8(n)\equiv \ALeg2d\equiv \frac{d-1}4\pmod2.$
This completes the proof of the theorem.
\qed\\

\section{Independence Property of Residue Symbols}
In this section, we assume that $a, b$ and $c $ are coprime positive integers
satisfying $a^2+b^2=2c^2$. Let $q_1, \cdots,q_{k'}$ be all the prime divisors of $abc$.

We first introduce some notation.
Given $k\ge2$,   let $\alpha=(\alpha_1,\cdots,\alpha_k)$ with all $\alpha_j\in\Brbig{1,5,9,13}$ and $\prod_{j=1}^k\alpha_j\equiv1\pmod8$.
Assume that $B=(B_{lj})_{k\times k} $ is a symmetric $\bF_2$ matrix with rank $k-2$ and $Bz_0=0$. Here $z_0=(1,\cdots,1)^{\rm T}\in\bF_2^k$. So there is a unique $z=(z_1,\cdots,z_k)^{\rm T}\not=0,z_0$ such that $Bz=0$ and $z_1=1$.
Our goal in this section is to estimate the number of $C_{k}(x,\alpha,B)$. Here $C_k(x,\alpha,B)$ consists of all $n=p_1\cdots p_k\le x$ satisfying
\begin{itemize}
\item $p_1<\cdots<p_k$ and $p_l\equiv\alpha_l\pmod{16}$ for all $1\le l\le k$,
\item $\ALeg{p_j}{p_l}=B_{lj}$ for all $1\le l<j\le k$,
\item $\Leg{p_l}{q_j}=1$ for all $1\le l\le k$ and $1\le j\le k'$, and
\item $\Leg{d'}d_4\Leg d{d'}_4=-1$ with $d=\prod_{l=1}^k p_l^{z_l}$ and $d'=\prod_{l=1}^kp_l^{1-z_l}$.
\end{itemize}

Due to the existence of the quartic residue symbols, we can't estimate $\#C_k(x,\alpha,B)$ directly. This problem can be solved by identifying $C_k(x,\alpha,B)$ with a set   counting corresponding integers over $\bZ[i]$. To this purpose, we first introduce some notation. Denote by $\cP$ the set of  primary primes of $\bZ[i]$ with positive imaginary part. We define $N$ to be the norm  map from $\bZ[i]$ to $\bZ$. Let $C_k'(x,\alpha,B)$ be all   $\eta=\lambda_1\cdots\lambda_k$ satisfying
\begin{itemize}
\item $N\eta\le x$ and $N\lambda_1<\cdots<N\lambda_k$,
\item $\lambda_l\in\cP$ and $N\lambda_l\equiv\alpha_l\pmod{16}$ for all $1\le l\le k$,
\item $\ALeg{N\lambda_j}{N\lambda_l}=B_{lj}$ for all $1\le l<j\le k$,
\item $\Leg{N\lambda_l}{q_j}=1$ for all $1\le l\le k$ and $1\le j\le k'$, and
\item $\Leg{\lambda'}{\lambda}=-1$ with $\lambda=\prod_{l=1}^k\lambda_l^{z_l}$ and
$\lambda'=\prod_{l=1}^k \lambda_l^{1-z_l}$.
\end{itemize}
\begin{lemma}\label{lem-identify-Ck(x,alpha,B)}
The following map is a bijection
\[C_k'(x,\alpha,B)\longrightarrow C_k(x,\alpha,B),\quad \eta\mapsto N\eta.\]
\end{lemma}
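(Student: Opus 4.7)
The plan is to invert the norm map on primes and then match each defining condition. For any rational prime $p_l\equiv\alpha_l\pmod{16}$ with $\alpha_l\in\Brbig{1,5,9,13}$ one has $p_l\equiv1\pmod4$, so $p_l$ splits in $\bZ[i]$; each of the two prime ideals above $p_l$ has a unique primary generator (the four associates $\pm\lambda,\pm i\lambda$ exhaust the four unit classes of $\bZ[i]/(2+2i)$), and exactly one of the two primary Gaussian primes above $p_l$ has positive imaginary part. This gives a canonical $\lambda_l\in\cP$ with $N\lambda_l=p_l$, so given $n=p_1\cdots p_k\in C_k(x,\alpha,B)$ the only candidate preimage is $\eta:=\prod_l\lambda_l$. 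Injectivity follows, and the first three bulleted conditions defining $C_k'$ coincide with those of $C_k$ under $p_l=N\lambda_l$.

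The heart of the proof is the equivalence of the fourth condition, which I will deduce from the identity
$$\Leg{\lambda'}{\lambda}=\Leg{d'}{d}_4\Leg{d}{d'}_4.$$
First I will verify that both rational quartic symbols are defined: since $B$ is symmetric with $Bz=Bz_0=0$, the sum $\sum_{z_l=1,\,z_j=0}B_{lj}$ equals $z^{\rm T}B(z_0-z)=z^{\rm T}Bz_0-z^{\rm T}Bz=0$ in $\bF_2$, whence $\Leg{d'}{d}=\Leg{d}{d'}=1$. Writing $p_j=\lambda_j\bar\lambda_j$ and expanding multiplicatively, the identity reduces to a pairwise identity indexed by $(l,j)$ with $z_l=1,z_j=0$: on the left each factor is $\Leg{\lambda_j}{\lambda_l}$, while on the right each factor is the four-term product $\Leg{\lambda_j}{\lambda_l}_4\Leg{\bar\lambda_j}{\lambda_l}_4\Leg{\lambda_l}{\lambda_j}_4\Leg{\bar\lambda_l}{\lambda_j}_4$.

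To carry out the pairwise collapse I will apply quartic reciprocity to $(\lambda_j,\lambda_l)$ and to $(\bar\lambda_j,\lambda_l)$ — note $\bar\lambda_j$ is primary since $(2-2i)=(2+2i)$ as ideals — and then use the conjugation identity $\Leg{\bar\alpha}{\bar\lambda}_4=\overline{\Leg{\alpha}{\lambda}_4}$ to rewrite $\Leg{\bar\lambda_l}{\lambda_j}_4=\overline{\Leg{\lambda_l}{\bar\lambda_j}_4}$. The two reciprocity sign factors are both $(-1)^{(p_l-1)(p_j-1)/16}$ and cancel; invoking $w\bar w=1$ for any unimodular quartic symbol $w$ collapses the four-factor product to $\Leg{\bar\lambda_j}{\lambda_l}_4^2=\Leg{\bar\lambda_j}{\lambda_l}$. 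Combined with $\Leg{\lambda_j\bar\lambda_j}{\lambda_l}=\Leg{p_j}{p_l}=(-1)^{B_{lj}}$ this yields the target pairwise value $(-1)^{B_{lj}}\Leg{\lambda_j}{\lambda_l}$.

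Finally, taking the product over all $(l,j)$, the global sign $(-1)^{\sum B_{lj}}=1$ by the same $\bF_2$-computation used for the well-definedness, and the remaining product is exactly $\Leg{\lambda'}{\lambda}$, proving the identity. The main obstacle will be the pairwise collapse: one must carefully manage the interplay between quartic reciprocity and complex conjugation so that four quartic residue symbols reduce cleanly to a single quadratic symbol scaled by a sign depending only on the $\bF_2$-data $B_{lj}$, and then exploit the symmetry of $B$ together with $Bz=Bz_0=0$ to eliminate the total sign.
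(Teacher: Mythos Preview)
Your approach is the same as the paper's: reduce the fourth-condition equivalence to the pairwise identity
\[
\Leg{p_j}{p_l}_4\Leg{p_l}{p_j}_4=\Leg{\lambda_j}{\lambda_l}
\]
via quartic reciprocity and the conjugation identity, then multiply over pairs. The paper does exactly this and stops, since this already matches the left side term by term.

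Your execution of the pairwise collapse is slightly off. With $A=\Leg{\lambda_j}{\lambda_l}_4$, $B=\Leg{\bar\lambda_j}{\lambda_l}_4$, $C=\Leg{\lambda_l}{\lambda_j}_4$, $D=\Leg{\bar\lambda_l}{\lambda_j}_4$, applying reciprocity to $A$ and $B$ gives $A=C\epsilon$ and $B=\Leg{\lambda_l}{\bar\lambda_j}_4\epsilon$ with $\epsilon=(-1)^{(p_l-1)(p_j-1)/16}$; the two $\epsilon$'s cancel and your conjugation step then kills $\Leg{\lambda_l}{\bar\lambda_j}_4\cdot D$, leaving $C^2=\Leg{\lambda_l}{\lambda_j}=\Leg{\lambda_j}{\lambda_l}$, not $\Leg{\bar\lambda_j}{\lambda_l}$. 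So no $(-1)^{B_{lj}}$ factor ever appears, and your global sign-cancellation using $z^{\rm T}B(z_0-z)=0$ is unnecessary (though harmless, since it correctly evaluates to $1$). The paper's computation lands directly on $\Leg{\lambda_j}{\lambda_l}_4^2$ for the same reason.

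One further point: your well-definedness check only shows the Jacobi symbol $\Leg{d'}{d}=1$, but the paper's rational quartic symbol $\Leg{d'}{d}_4$ requires $\Leg{d'}{p}=1$ for \emph{each} prime $p\mid d$. This stronger condition does hold --- it is exactly the $l$-th coordinate of $B(z_0-z)=0$ for $z_l=1$ --- but your argument via $z^{\rm T}B(z_0-z)$ sums these coordinates and so loses the prime-by-prime information.
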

\begin{proof}
Let $\eta=\lambda_1\cdots\lambda_k$ be an element of $C_k'(x,\alpha,B)$ satisfying $N\lambda_1<\cdots<N\lambda_k$ and $\lambda_l\in\cP$ for all $1\le l\le k$. Denote by $p_l=N\lambda_l$ for all $1\le l\le k$. To show that $N\eta\in C_k(x,\alpha,B)$, we only need to verify $\Leg{d'}d_4\Leg d{d'}_4=-1$ with $d=N\lambda$ and $d'=N\lambda'$.
Note that
\begin{eqnarray*}
\Leg{p_l}{p_j}_4\Leg{p_j}{p_l}_4&=&
\Leg{\lambda_l\overline{\lambda_l}}{\lambda_j}_4
\Leg{\lambda_j\overline{\lambda_j}}{\lambda_l}_4  \\
&=&\Leg{\lambda_l}{\lambda_j}_4\Leg{\overline{\lambda_l}}{\lambda_j}_4
\Leg{\lambda_j}{\lambda_l}_4\Leg{\overline{\lambda_j}}{\lambda_l}_4 \\
&=& \Leg{\lambda_j}{\lambda_l}_4\Leg{{\lambda_j}}{\overline{\lambda_l}}_4
\Leg{\lambda_j}{\lambda_l}_4\Leg{\overline{\lambda_j}}{\lambda_l}_4.
\end{eqnarray*}
Here we have used the quadratic reciprocity law for $\Leg{\lambda_l}{\lambda_j}_4$ and   $\Leg{\overline{\lambda_l}}{\lambda_j}_4$.
From the definition of the quartic residue symbol, we have
$\Leg{{\lambda_j}}{\overline{\lambda_l}}_4\Leg{\overline{\lambda_j}}{\lambda_l}_4=1$.
Consequently,\[\Leg{p_l}{p_j}_4\Leg{p_j}{p_l}_4=
\Leg{\lambda_j}{\lambda_l}.\]
Therefore,
\[\Leg{d'}d_4\Leg d{d'}_4=\Leg{\lambda'}{\lambda}=-1.\]
So $N\eta$ lies in $C_k(x,\alpha,B)$.
The map is obviously injective. The map is surjective by observing  that for every rational prime $p\equiv1\pmod4$ there is exactly one primary prime in $\cP$ with norm $p$.
This completes the proof of the lemma.
\end{proof}

To use the idea of Cremona-Odoni \cite{cremona1989some}, we introduce another set $T(x)$. Here $T(x)$ is the set of positive integers $n=p_1\cdots p_{k-1}\le x$ satisfying
\begin{itemize}
\item $p_1<\cdots <p_{k-1}$,
\item $p_l\equiv \alpha_l\pmod{16}$ for all $1\le l\le k-1$,
\item $\ALeg{p_j}{p_l}=B_{lj}$ for all $1\le l<j\le k-1$, and
\item $\Leg{p_l}{q_j}=1$ for all $1\le l\le k-1$ and $1\le j\le k'$.
\end{itemize}
The independence property of Legendre symbols of Rhoades \cite{rhoades20092} implies
\begin{equation}\label{number-T(x)}
\#T(x)\sim 2^{-(k'+3)(k-1)-\binom{k-1}2}\cdot\#C_{k-1}(x),
\end{equation}
where $\binom k2$ is the binomial coefficient and $C_k(x)$ is the set of all positive square-free integers $n\le x$ with exactly $k$ prime factors.
Like $C_k(x,\alpha,B)$, we have to identify $T(x)$ with another set $T'(x)$. Here $T'(x)$ is the set of $\eta=\lambda_1\cdots\lambda_{k-1}$ satisfying
\begin{itemize}
\item $N\eta\le x$ and $N\lambda_1<\cdots<N\lambda_{k-1}$,
\item $\lambda_l\in\cP$ and $N\lambda_l\equiv\alpha_l\pmod{16}$ for all $1\le l\le k-1$,
\item $\ALeg{N\lambda_j}{N\lambda_l}=B_{lj}$ for all $1\le l<j<k$, and
\item $\Leg{N\lambda_l}{q_j}=1$ for all $1\le l<k$ and $1\le j\le k'$.
\end{itemize}
Similarly, we have the following lemma.
\begin{lemma}\label{lem-identify-T(x)}
The following map is a bijection
\[T'(x)\longrightarrow T(x),\quad \eta\mapsto N\eta.\]
\end{lemma}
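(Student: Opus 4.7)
The plan is to observe that this lemma is the direct analogue of Lemma \ref{lem-identify-Ck(x,alpha,B)} but substantially simpler, since the definitions of $T(x)$ and $T'(x)$ do not include any quartic-residue condition. Accordingly, no quartic reciprocity computation (which was the only nontrivial step in the previous proof) is needed here. It suffices to check that the norm map is well-defined, injective, and surjective between these sets.

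For well-definedness: given $\eta=\lambda_1\cdots\lambda_{k-1}\in T'(x)$, each $\lambda_l\in\cP$ is a primary Gaussian prime with $N\lambda_l\equiv\alpha_l\pmod{16}$. Because $\alpha_l\in\Brbig{1,5,9,13}$, we have $N\lambda_l\equiv1\pmod4$; since primary primes above a ramified or inert rational prime are purely real and hence excluded from $\cP$, the norm $p_l:=N\lambda_l$ is a rational prime with $p_l\equiv1\pmod4$. The conditions $p_l\equiv\alpha_l\pmod{16}$, $\ALeg{p_j}{p_l}=B_{lj}$ and $\Leg{p_l}{q_j}=1$ are then identical to the corresponding conditions in the definition of $T(x)$, the size condition $N\eta\le x$ becomes $p_1\cdots p_{k-1}\le x$, and the ordering $N\lambda_1<\cdots<N\lambda_{k-1}$ becomes $p_1<\cdots<p_{k-1}$. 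Thus $N\eta\in T(x)$.

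For bijectivity: the key observation is that for every rational prime $p\equiv1\pmod4$ there is exactly one element $\lambda\in\cP$ with $N\lambda=p$, namely the unique primary prime above $p$ with positive imaginary part (its conjugate $\overline\lambda$ is the other primary prime above $p$, but it has negative imaginary part and so lies outside $\cP$). Injectivity now follows from unique factorization in $\bZ[i]$: the rational factorization of $N\eta$ into distinct primes $p_1<\cdots<p_{k-1}$ determines each $\lambda_l$ uniquely as the element of $\cP$ above $p_l$. Surjectivity is the same statement read backwards: given $n=p_1\cdots p_{k-1}\in T(x)$, all $p_l\equiv1\pmod4$ so we may lift each $p_l$ to the unique $\lambda_l\in\cP$ above it, and the product $\eta=\lambda_1\cdots\lambda_{k-1}$ verifies every bullet in the definition of $T'(x)$ and satisfies $N\eta=n$.

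There is no real obstacle; the only mildly subtle point is the existence and uniqueness of a lift $\lambda\in\cP$ above each split rational prime, which is classical. Everything else is a literal translation of the rational Legendre/Jacobi conditions into Gaussian ones via the norm map.
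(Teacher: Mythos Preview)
Your proof is correct and follows exactly the approach the paper intends: the paper does not give a separate proof of this lemma but simply writes ``Similarly, we have the following lemma,'' referring back to Lemma \ref{lem-identify-Ck(x,alpha,B)}. Your write-up supplies precisely the details that remain after deleting the quartic-residue computation from that earlier argument, and the key point you isolate---that each rational prime $p\equiv1\pmod4$ has a unique lift to $\cP$---is the same one used there for injectivity and surjectivity.
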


Now we can use the idea of Cremona-Odoni \cite{cremona1989some} to prove the independence property of residue symbols.
\begin{thm}\label{thm-independence}
For $k$   an integer greater than $1$, let $\alpha=(\alpha_1,\cdots,\alpha_k)$ with $\prod_{j=1}^k\alpha_j\equiv1\pmod8$  and all $\alpha_j\in\Brbig{1,5,9,13}$.
Assume that $B=(B_{lj})_{k\times k} $ is a symmetric $\bF_2$ matrix with rank $k-2$ and the sum of its any given row being zero. Then
\[\#C_k(x,\alpha,B)=\frac{1+o(1)}{ 2^{3k+k'k+1+\binom k2}}\cdot\#C_k(x).\]
\end{thm}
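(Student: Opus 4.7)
The plan is to follow Cremona--Odoni \cite{cremona1989some}, transferring the count to $\bZ[i]$ and then singling out the largest prime factor of $\eta$.

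First, by Lemmas \ref{lem-identify-Ck(x,alpha,B)} and \ref{lem-identify-T(x)}, I would replace $\#C_k(x,\alpha,B)$ and $\#T(x)$ with $\#C_k'(x,\alpha,B)$ and $\#T'(x)$. The crucial payoff of this transfer---already used in the proof of Lemma \ref{lem-identify-Ck(x,alpha,B)}---is that the quartic constraint $\Leg{d'}{d}_4\Leg{d}{d'}_4=-1$ becomes the quadratic condition $\Leg{\lambda'}{\lambda}=-1$ over $\bZ[i]$, so every defining condition for $C_k'(x,\alpha,B)$ reads as a ray class condition on the primary prime factors of $\eta$, which is the natural setting for Hecke $L$-functions from \S2.5.

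Second, I would decompose
\[\#C_k'(x,\alpha,B) \;=\; \sum_{\eta' = \lambda_1\cdots\lambda_{k-1}\in T'(\cdot)} N_{\eta'}(x),\]
where $N_{\eta'}(x)$ counts primary primes $\lambda_k\in\cP$ with $N\lambda_{k-1}<N\lambda_k\le x/N\eta'$ satisfying $N\lambda_k\equiv\alpha_k\pmod{16}$, $\Leg{N\lambda_k}{q_j}=1$ for $1\le j\le k'$, $\ALeg{N\lambda_k}{N\lambda_l}=B_{lk}$ for $1\le l<k$, and the quartic-turned-quadratic condition $\Leg{\lambda'}{\lambda}=-1$. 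Jointly these carve out a coset of density
\[\delta \;=\; \frac{1}{8}\cdot\frac{1}{2^{k'}}\cdot\frac{1}{2^{k-1}}\cdot\frac{1}{2} \;=\; 2^{-(k+k'+3)}\]
(measured against the counting function of all rational primes) in the ray class group modulo
\[\dag_{\eta'}\;:=\;(16)\cdot\prod_{j=1}^{k'}(q_j)\cdot\prod_{l=1}^{k-1}\lambda_l.\]
Orthogonality of Hecke characters modulo $\dag_{\eta'}$ then yields $N_{\eta'}(x) = \delta\cdot\pi(x/N\eta') + \mathrm{Err}_{\eta'}(x)$, where $\mathrm{Err}_{\eta'}(x)$ is a linear combination, over non-principal characters $\chi$ modulo $\dag_{\eta'}$, of $\psi(y,\chi)$-type sums bridged by partial summation.

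Third, applying the explicit formula (\ref{eq:explicit-we need}) with $T=\exp(c\sqrt{\log x})$, Siegel's theorem (Proposition \ref{thm-Siegel}) to isolate any exceptional real zero, and Page's theorem (Proposition \ref{thm-Page}) to confine the at-most-one exceptional modulus, one bounds $\sum_{\eta'}\mathrm{Err}_{\eta'}(x)=o\brbig{\#C_k(x)}$ after restricting $\eta'$ to $N\eta'\le x^{1-\varepsilon}$ (the complementary range being handled via Brun--Titchmarsh). Comparing the resulting main term to the analogous unrestricted sum that reproduces $\#C_k(x)$, and using (\ref{number-T(x)}), yields
\[\#C_k(x,\alpha,B)\;\sim\;\delta\cdot 2^{-(k'+3)(k-1)-\binom{k-1}{2}}\cdot \#C_k(x)\;=\;\frac{\#C_k(x)}{2^{3k+k'k+1+\binom{k}{2}}},\]
where the arithmetic identity $(k+k'+3)+(k'+3)(k-1)+\binom{k-1}{2}=3k+k'k+1+\binom{k}{2}$ produces the exponent stated in the theorem. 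The hardest part is the \emph{uniform} control of $\mathrm{Err}_{\eta'}(x)$ in $\eta'$: since $N\dag_{\eta'}$ can be as large as $x$, the zero-free region from Proposition \ref{thm-Siegel} is narrow and a single Siegel zero threatens the entire argument; overcoming this requires a careful truncation in (\ref{eq:explicit-we need}), a restricted range of $N\eta'$, and Page's theorem to quarantine the unique possible exceptional character into a negligible subfamily of $\eta'$. The density calculation itself, once the Gaussian-integer transfer is made, is routine.
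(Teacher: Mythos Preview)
Your overall architecture matches the paper's: transfer to $\bZ[i]$ via Lemmas~\ref{lem-identify-Ck(x,alpha,B)} and~\ref{lem-identify-T(x)}, strip off the largest prime factor as in Cremona--Odoni, apply character orthogonality and the explicit formula, and use Siegel and Page to handle the exceptional zero. Your density $\delta=2^{-(k+k'+3)}$ and the final exponent arithmetic are correct.

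However, the error-term parameters you propose do not work. With the restriction $N\eta'\le x^{1-\varepsilon}$ the modulus satisfies $\log N\dag_{\eta'}\asymp\log x$, so in the explicit formula~(\ref{eq:explicit-we need}) the dominant term is
\[
\frac{x}{N\eta'}\,(\log x)^2\exp\Bigl(-\frac{c_1\log(x/N\eta')}{\log(T\,N\dag_{\eta'})}\Bigr),
\]
and with $T=\exp(c\sqrt{\log x})$ and $N\eta'$ near $x^{1-\varepsilon}$ the exponent is $O(1)$: you save only a constant factor, which cannot beat the sum over $\eta'$. Brun--Titchmarsh on the complementary range does not rescue this, because the problematic range $N\eta'\in(x^{1/2},x^{1-\varepsilon}]$ is both too large for the explicit formula and too small to discard.

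The paper's (and Cremona--Odoni's) fix is to restrict to the much narrower window $\mu<N\epsilon\le\nu$ with $\mu=(\log x)^{100}$ and $\nu=\exp\bigl(\log x/(\log\log x)^{100}\bigr)$, so that $\log N\dag_{\eta'}\ll \log x/(\log\log x)^{100}$; with $T=(N\epsilon)^4$ the explicit formula then saves $\exp\bigl(-c(\log\log x)^{100}\bigr)$, which beats any power of $\log x$. The crucial accompanying step is Lemma~\ref{lem-Cremona}, which shows that the ranges $N\epsilon\in(20,\mu]$ and $N\epsilon\in(\nu,x^{(k-1)/k}]$ contribute $o\bigl(x(\log\log x)^{k-1}/\log x\bigr)$ to the main term, while $N\epsilon>x^{(k-1)/k}$ contributes nothing because then $N\lambda_{k-1}>x/N\epsilon$. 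This subpolynomial truncation of the modulus is the missing idea in your sketch.
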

\begin{proof}
Like Cremona-Odoni \cite{cremona1989some}, we consider the comparison map
\[f: C_k'(x,\alpha,B)\longrightarrow T'(x),\quad \eta\mapsto \eta/\tilde\eta,\]
where $\tilde\eta$ lies in $\cP$ such that its norm is the maximal prime divisor of $N\eta$. According to $z_k=0$ or not, the proof can be divided into two cases.

We first consider the case $z_k=0$. The next step is to consider the fiber of the comparison map $f$. Let $\epsilon=\prod_{j=1}^{k-1}\lambda_j\in T'(x)$ with $N\lambda_1<\cdots<N\lambda_{k-1}$ and all $\lambda_j\in\cP$. Then $\epsilon$ lies in $\Im f$ if and only if there exists a  $\lambda_0\in\cP$ with $N\lambda_0$ lying in $(N\lambda_{k-1}, x/N\epsilon]$ such that
\begin{enumerate}
\item[(1)] $N\lambda_0\equiv\alpha_k\pmod{16}$ and $\ALeg{N\lambda_0}{N\lambda_l}=B_{lk}$ for all $1\le l\le k-1$,
\item[(2)] $\Leg{N\lambda_0}{q_j}=1$ for all $1\le j\le k'$ and
\item[(3)] $\Leg{\lambda_0}\lambda=-\Leg{\epsilon/\lambda}\lambda$ with $\lambda=\prod_{l=1}^{k-1}\lambda_l^{z_l}$.
\end{enumerate}
Then  there is a unique subset $\sA=\sA_\epsilon$ of $\brBig{\bZ[i]/16\epsilon \epsilon_1\epsilon_2\bZ[i]}^\times$ such that for any prime $\theta$ the integer $\theta\epsilon$ lies in $C_k'(x,\alpha,B)$ if and only if $\theta\in\cP$ and $\theta\in\sA$ satisfying $N\theta\in (N\lambda_{k-1},N\epsilon]$. Here  $\epsilon_1$ is the product of all   primary primes lying in $\cP$ and lying above $p$ with $p\mid abc$ and $p\equiv1\pmod4$, and  $\epsilon_2$ is the product of all prime factors $p$ of $abc$ with $ p\equiv3\pmod4$. The cardinality of $\sA$ is evaluated in the following lemma.
\begin{lemma}\label{lem-cardi-Aepsilon}
Let $\varphi(\epsilon)$ be the cardinality of $G=\brBig{\bZ[i]/\fc}^\times$ with  $\fc=\fc_\epsilon$ the ideal $16\epsilon\epsilon_1\epsilon_2\bZ[i]$.  Then
\[\#\sA=2^{-k-k'-4}\varphi(\epsilon).\]
\end{lemma}
\emph{Proof of Lemma \ref{lem-cardi-Aepsilon}.}
From the definition of $\sA$, we see that $\sA$ represents those primary classes $\beta$ of $G$
such that
\begin{enumerate}
\item[(a)] $N\beta\equiv\alpha_k\pmod{16}$ and $\ALeg{N\beta}{N\lambda_l}=B_{lk}$ for all $1\le l\le k-1$,
\item[(b)] $\Leg{N\beta}{q_j}=1$ for all $1\le j\le k'$, and
\item[(c)] $\Leg\beta\lambda=\Leg{\epsilon/\lambda}\lambda$ with $\lambda=\prod_{l=1}^{k-1}\lambda_l^{z_l}$.
\end{enumerate}
Via Chinese Remainder Theorem we obtain the following isomorphism
\[G\simeq \brBig{\bZ[i]/16\bZ[i]}^\times\times
\brbigg{\prod_{l=1}^{k-1}\brBig{\bZ[i]/\lambda_l\bZ[i]}^\times}\times
\brBig{\bZ[i]/\epsilon_1\bZ[i]}^\times\times
\brBig{\bZ[i]/\epsilon_2\bZ[i]}^\times,\]
where the map is given by $\beta\mapsto (\beta_0,\cdots,\beta_{k-1},\beta_1',\beta_2')$. Here $\beta_l$ denotes the class $\beta\pmod {\lambda_l\bZ[i]}$ if $1\le l\le k-1$ and $\beta\pmod{16\bZ[i]}$ if $l=0$, and $\beta_j'$ denotes the class $\beta\pmod{\epsilon_j\bZ[i]}$ for $j=1,2$. Note that the residue symbol $\Leg\cdot\lambda$ is only non-trivial on those $\brBig{\bZ[i]/\lambda_l\bZ[i]}^\times$-component with $z_l=1$. For any $1\le l\le k-1$, the condition $\ALeg{N\beta_l}{N\lambda_l}=B_{lk}$ takes up a half of the $\brBig{\bZ[i]/\lambda_l\bZ[i]}^\times$-component. Here we have used the isomorphism $\brBig{\bZ[i]/\lambda_l\bZ[i]}^\times\simeq \brBig{\bZ/N\lambda_l \bZ}^\times$ by $\lambda_l\in\cP$. The condition $\Leg\beta\lambda=-\Leg{\epsilon/\lambda}\lambda$ selects another half of the $\prod_{l=1}^{k-1}\brBig{\bZ[i]/\lambda_l\bZ[i]}^\times$-component.
Similarly, we have the following isomorphisms
\[\brBig{\bZ[i]/\epsilon_1\bZ[i]}^\times\simeq \prod_{\lambda_j'\mid \epsilon_1}
\brBig{\bZ[i]/\lambda_j'\bZ[i]}^\times\quad{\rm and}\quad
\brBig{\bZ[i]/\epsilon_2\bZ[i]}^\times\simeq \prod_{q_j\mid \epsilon_2} \brBig{
\bZ[i]/q_j\bZ[i]}^\times.\]
Like above, $\Leg{N\beta_1'}{q_j}=1$ selects a half of the $\brBig{\bZ[i]/\lambda_j'\bZ[i]}^\times$-component provided that $\lambda_j'\mid\epsilon_1$. For $q_j\mid \epsilon_2$, the condition $\Leg{N\beta_2'}{q_j}=1$ also selects a half of the $\brBig{\bZ[i]/q_j\bZ[i]}^\times$-component by the composition of the homomorphisms
\[\brBig{\bZ[i]/q_j\bZ[i]}^\times \twoheadrightarrow \brBig{\bZ/q_j\bZ}^\times \rightarrow \Brbig{\pm1}. \]
Here the last homomorphism is given by the Legendre symbol.

Now we consider the $G'$-component, where $G'=\brBig{\bZ[i]/16\bZ[i]}^\times$. The primary condition selects $G_0=1+(2+2i)\bZ[i]$ of $G'$ and $\#G'=4\#G_0$. Then $N\beta\equiv\alpha_k\pmod{16}$ selects a fourth of $G_0$. So
\[\#\sA=2^{-k-k'-4}\varphi(\epsilon).\]
This completes the proof of the lemma.
\qed\\

For any $\epsilon\in T'(x)$,   let $h(\epsilon)$ be the number of primes $\theta\in\cP$ such that $\theta+16\epsilon\epsilon_1\epsilon_2\bZ[i]$ lies in $\sA$ and $N\theta$ lies in $(N\lambda_{k-1},N\epsilon]$. Then we have
\begin{equation}\label{num-C_k'(x,a,B)}
\#C_k'(x,\alpha,B)=\sum_{\epsilon\in T'(x)} h(\epsilon).
\end{equation}

We will divide the sum in (\ref{num-C_k'(x,a,B)}) into several parts according to the norm of $\eta$. To this purpose, we introduce some notation. We define $\mu$ and $\nu$ to be $(\log x)^{100}$ and $\exp\brBig{\frac{\log x}{(\log\log x)^{100}}}$ respectively. For a set $M$ consisting of positive integers and a function $g$ on $\bZ[i]$, we define $$\oset*{\sum_{N\delta\in M}}g(\delta)=\sum_{\delta\in T'(x)\atop N\delta\in M} g(\delta).$$
Like Lemma 3.1 of Cremona-Odoni, we have the following lemma (the proof is the same as that of Cremona-Odoni).
\begin{lemma}\label{lem-Cremona}
If $m=20$ and $n=\mu$, then
\[\oset*{\sum_{m<N\delta\le n}}\Li(x/N\delta)=\oBig{  \frac{x(\log\log x)^{k-1}}{\log x} }.\]
Similar estimation is true for $m=\nu$ and $n=x^{\frac{k-1}k}$. Moreover, we have
\[\oset*{\sum_{\mu<N\delta\le \nu}}\Li(x/N\delta)\sim \frac{\#T'(x)}{k-1}\log\log x.\]
\end{lemma}

Now we divide the interval $(1,x]$ into five parts by the points $20, \mu, \nu$ and $x^{\frac{k-1}k}$. First, for those $\epsilon\in T'(x)$ satisfying $N\epsilon\le 20$, we get $h(\epsilon)\le \pi(x/N\epsilon)$ by noting that every prime ideal corresponds to at most one primary prime element. Here $\pi(y)$ denotes the number of those prime ideals with norm no larger than $y$, and  the prime ideal theorem says that \[\pi(y)\sim \Li(y).\]So we have
\[\oset*{\sum_{N\epsilon\le20}}h(\epsilon)=\Obig{\Li(x)}.\]
Next, if $\epsilon\in T'(x)$ and $20<N\epsilon\le \mu$, then we also have $h(\epsilon)=\Obig{\Li(x/N\epsilon)}$. Thus Lemma \ref{lem-Cremona} implies that
\[\oset*{\sum_{20<N\epsilon\le \mu}}h(\epsilon)=\oBig
{  \frac{x(\log\log x)^{k-1}}{\log x} }.\]
By the same reason, we obtain
\[\oset*{\sum_{\nu<N\epsilon\le x^{\frac{k-1}k}}} h(\epsilon)=\oBig
{  \frac{x(\log\log x)^{k-1}}{\log x} }.\]
Next, if $N\epsilon>x^{\frac{k-1}k}$, then $N\lambda_{k-1}>x^{\frac1k}$. So
$x/N\epsilon<x^{\frac1k}<N\lambda_{k-1}<N\theta$. Thus from the definition of $h(\epsilon)$ we get $h(\epsilon)=0$ in this case. Therefore,
\[\oset*{\sum_{x^{\frac{k-1}k}<N\epsilon\le x}}h(\epsilon)=0.\]
From these estimations, we see that
\begin{equation*}
\#C_k'(x,\alpha,B)\sim\frac12\oset*{\sum_{\mu<N\epsilon\le\nu}}
\pi'(x/N\epsilon,\sA,16\epsilon\epsilon_1\epsilon_2)-
\half\oset*{\sum_{\mu<N\epsilon\le \nu}}
\pi'(N\lambda_{k-1},\sA,16\epsilon\epsilon_1\epsilon_2).
\end{equation*}
Here the factor $\half$ comes from $\theta\in\cP$, and $\pi'(y,\sB,\gamma)$ denotes the number of prime elements $\theta$ of $\bZ[i]$ such that $N\theta\le y$ and $\theta+\gamma\bZ[i]$ lies in $\sB$. Noting that
\[\oset*{\sum_{\mu<N\epsilon\le \nu}}
\pi'(N\lambda_{k-1},\sA,16\epsilon\epsilon_1\epsilon_2)
\ll \nu \Li(\nu)=\oBig
{  \frac{x(\log\log x)^{k-1}}{\log x} },\] we derive
\begin{equation}\label{num-C_k'(x,a,B)-mu to nu}
\#C_k'(x,\alpha,B)\sim\frac12\oset*{\sum_{\mu<N\epsilon\le\nu}}
\pi'(x/N\epsilon,\sA,16\epsilon\epsilon_1\epsilon_2).
\end{equation}

To estimate the sum in the right hand side of (\ref{num-C_k'(x,a,B)-mu to nu}), we have to use the Dirichlet prime ideal theorem. This theorem only gives the distribution of prime ideals, while (\ref{num-C_k'(x,a,B)-mu to nu}) requires the estimation on the number of prime elements. We can solve this problem by the following transformation.
Via Theorem 6.1 of Lang \cite{lang110gtm}, we yield the exact sequence
\begin{eqnarray}\label{exact-seq}
\xymatrix{
1\ar[r]^{} &\bZ[i]^\times  \ar[r]^{} & \brbig{\bZ[i]/\fc}^{\times} \ar[r]^{\Phi} &
I(\fc)/P(\fc)  \ar[r]^{} & 1   }
\end{eqnarray}with $\fc=\fc_\epsilon$ defined in Lemma \ref{lem-cardi-Aepsilon}.
Here $\Phi$ is the map induced from $\Phi'$ which sends every $\fc$-invertible element $\beta$ of $\bZ[i]$ into the principal ideal $(\beta)$. For an ideal $\fa$ and a subset $\sS$ of $I(\fa)/P_\fa$, we define $\pi(y,\sS,\fa)$ to be the number of prime ideals $\fp$ such that $N\fp\le y$ and $\fp P_\fa$ lies in $\sS$. Let $\sT=\sT_\epsilon=\Phi(\sA)$. Then we get
\begin{equation}\label{num-transform}
\pi'(y,\sA,16\epsilon\epsilon_1\epsilon_2)=\pi(y,\sT,\fc),
\quad {\rm  }\quad \#\sA=\#\sT
\end{equation}
by noting that every prime ideal  in a class of $\sT$ corresponds to exactly one primary prime element.
From (\ref{num-C_k'(x,a,B)-mu to nu}) and (\ref{num-transform}) we get
\begin{equation}\label{num-last-version-C_k'(x,a,B)}
2\#C_k'(x,\alpha,B)\sim  \oset*{\sum_{\mu<N\epsilon\le \nu}} \pi(x/N\epsilon,\sT_\epsilon,\fc_\epsilon).
\end{equation}
By the   relation of $\psi(y,\sT,\fc)$ and $\pi(y,\sT,\fc)$, it suffices to estimate
\begin{equation}\label{sum-psi(x,sT,fc)}
\oset*{\sum_{\mu<N\epsilon\le \nu}} \psi(x/N\epsilon,\sT_\epsilon,\fc_\epsilon).
\end{equation}
Here $\psi(y,\sT,\fc)$ is given by
\[\sum_{N\fa\le y\atop \fa P_\fc\in\sT} \Lambda(\fa).\]
Via the orthogonality of characters and the exact sequence (\ref{exact-seq}), we get
\[\psi(y,\sT_\epsilon,\fc_\epsilon)=\frac4{\varphi(\epsilon)}\sum_{\chi \mod \fc_\epsilon} \psi(y,\chi)\sum_{[\fa]\in\sT_\epsilon} \overline{\chi(\fa)}.\]
Here $\chi$ runs over all characters of $I(\fc_\epsilon)/P_{\fc_\epsilon}$ and
\[\psi(y,\chi)=\sum_{N\fa\le y} \Lambda(\fa)\chi(\fa).\]
Applying this formula to (\ref{sum-psi(x,sT,fc)}), we derive
\[\oset*{\sum_{\mu<N\epsilon\le \nu}} \psi(x/N\epsilon,\sT_\epsilon,\fc_\epsilon)
=\oset*{\sum_{\mu<N\epsilon\le \nu}} \frac4{\varphi(\epsilon)}\sum_{\chi \mod \fc_\epsilon} \psi(x/N\epsilon,\chi)\sum_{[\fa]\in\sT_\epsilon} \overline{\chi(\fa)}.\]
Separating out all the principal characters, we get
\begin{equation}\label{sum-I and II}
\oset*{\sum_{\mu<N\epsilon\le \nu}} \psi(x/N\epsilon,\sT_\epsilon,\fc_\epsilon)
=(I)+(II)
\end{equation}with
\begin{eqnarray*}
(I)&=&\oset*{\sum_{\mu<N\epsilon\le \nu}}\frac{4\#\sT_\epsilon}{\varphi(\epsilon)} \psi(x/N\epsilon,\chi_0),  \\
(II)&=&\oset*{\sum_{\mu<N\epsilon\le \nu}} \frac4{\varphi(\epsilon)}\sideset{}'\sum_{\chi \mod \fc_\epsilon} \psi(x/N\epsilon,\chi)\sum_{[\fa]\in\sT_\epsilon} \overline{\chi(\fa)}.
\end{eqnarray*}Here $\sideset{}'\sum$ is the sum over all non-principal characters of a fixed modulus.

We first treat the main term (I). Via Lemma \ref{lem-cardi-Aepsilon} we obtain that
\begin{eqnarray*}
(I)&=&2^{-k-k'-2}    \oset*{\sum_{\mu<N\epsilon\le \nu}}    \psi(x/N\epsilon)  \\
&\sim&  2^{-k-k'-2}  \oset*{\sum_{\mu<N\epsilon\le \nu}}
\log(x/N\epsilon)\Li(x/N\epsilon)  \\
&\sim &  2^{-k-k'-2}\log x  \oset*{\sum_{\mu<N\epsilon\le \nu}}
\Li(x/N\epsilon)  \\
&\sim&\frac1{(k-1)\cdot2^{k+k'+2}}\#T'(x)\cdot\log x\cdot\log\log x.
\end{eqnarray*}

Now we assume that $(II)$ is error term to prove the theorem. By (\ref{num-last-version-C_k'(x,a,B)}) and (\ref{sum-psi(x,sT,fc)}), we have
\[\#C_k'(x,\alpha,B)\sim \frac1{(k-1)\cdot 2^{k+k'+3}}\#T'(x)\cdot\log\log x
\sim \frac1{2^{k'k+3k+1+\binom k2}}\#C_k(x).\]
Here we have used (\ref{number-T(x)}) and Lemma \ref{lem-identify-T(x)}. So by lemma \ref{lem-identify-Ck(x,alpha,B)} the proof of the theorem is reduced to showing that $(II)$ is an error term.

Like Cremona-Odoni, we have to separate out all the modulus in the sum $(II)$ with possible Siegel zeros.   We denote by $\dag_1$  the conductor of the exceptional primitive conductor with $Z\le 256\nu$ in Page Theorem (Proposition \ref{thm-Page}). According to   the modulus $\dag$ being a multiple of $\dag_1$ or not, we can divide the sum $(II)$ into the subsums $(III)$ and $(IV)$, where
\begin{eqnarray*}
(III)&=&\oset*{\sum_{\mu<N\epsilon\le \nu\atop \dag_1\mid\fc_\epsilon}} \frac4{\varphi(\epsilon)}\sideset{}'\sum_{\chi \mod \fc_\epsilon} \psi(x/N\epsilon,\chi)\sum_{[\fa]\in\sT_\epsilon} \overline{\chi(\fa)},  \\
(IV)&=&\oset*{\sum_{\mu<N\epsilon\le \nu\atop \dag_1\nmid\fc_\epsilon}} \frac4{\varphi(\epsilon)}\sideset{}'\sum_{\chi \mod \fc_\epsilon} \psi(x/N\epsilon,\chi)\sum_{[\fa]\in\sT_\epsilon} \overline{\chi(\fa)}.
\end{eqnarray*}
We use the trivial estimation $\psi(x/N\epsilon,\chi)\ll \psi(x/N\epsilon)$ to bound $(III)$ and get
\begin{eqnarray*}
(III)&\ll&\oset*{\sum_{\mu<N\epsilon\le \nu\atop \dag_1\mid\fc_\epsilon}}\psi(x/N\epsilon)\ll x\oset*{\sum_{\mu<N\epsilon\le \nu\atop \dag_1\mid\fc_\epsilon}}(N\epsilon)^{-1}  \\
&=&\frac x{N\dag_1}\sum_{\mu<t N\dag_1 \le \nu}t^{-1}\sum_{\epsilon\in T'(\infty)\atop \dag_1\mid \fc_\epsilon, N\epsilon=tN\dag_1}1\ll  \frac{x\log \nu}{N\dag_1}.
\end{eqnarray*}
Again by Page Theorem (Proposition \ref{thm-Page}) with $Z=256\nu$, there is a positive constant $c_6$ such that the Siegel zero $\beta$ of the primitive character  with modulus $\dag_1$ has the property
\[\beta>1-\frac{c_6}{\log 256\nu}.\]
Via Siegel Theorem (Proposition \ref{thm-Siegel}), for any $\epsilon'>0$ (not confused with our $\epsilon\in T'(x)$), there is a constant $c'=c'(\epsilon',2)>0$ such that
\[\beta\le 1-c'(4N\dag_1)^{-\epsilon'}.\]
Taking $\epsilon'=1/200$, we obtain $N\dag_1\gg_{\epsilon'} (\log\nu)^{100}$. Consequently,\[(III)\ll x(\log\nu)^{-99}.\]

Next, we bound the sum $(IV)$. There is no Siegel zero in $(IV)$. So we can apply the explicit formula  (\ref{eq:explicit-we need}) with $T=(N\epsilon)^4$ to all the $\psi(x/N\epsilon,\chi)$ in $(IV)$ and obtain
\[\psi(x/N\epsilon,\chi)\ll x(N\epsilon)^{-1}(\log x)^2 \exp
\brBig{-\frac{c_7\log(x/N\epsilon)}{\log N\epsilon}}+x(N\epsilon)^{-5}(\log x)^2+
x^{\frac14}(N\epsilon)^{-\frac14} \log(x/N\epsilon).\] Correspondingly,   $(IV)$ is bounded (up to a constant) by the sum of the three  sums
\begin{eqnarray*}
(V)&=&\oset*{\sum_{\mu<N\epsilon\le \nu\atop \dag_1\nmid\fc_\epsilon}}
x(N\epsilon)^{-1}(\log x)^2 \exp
\brBig{-\frac{c_7\log(x/N\epsilon)}{\log N\epsilon}}, \\
(VI)&=&\oset*{\sum_{\mu<N\epsilon\le \nu\atop \dag_1\nmid\fc_\epsilon}}
x(N\epsilon)^{-5}(\log x)^2,  \\
(VII)&=&\oset*{\sum_{\mu<N\epsilon\le \nu\atop \dag_1\nmid\fc_\epsilon}}
x^{\frac14}(N\epsilon)^{-\frac14} \log(x/N\epsilon).
\end{eqnarray*}
For the sum $(V)$, we have
\begin{eqnarray*}
(V)&\ll&x(\log x)^2\exp\brBig{-c_8(\log\log x)^{100}}\cdot
\oset*{\sum_{\mu<N\epsilon\le \nu\atop \dag_1\nmid\fc_\epsilon}} (N\epsilon)^{-1}  \\
&\ll&x(\log x)^3\exp\brBig{-c_8(\log\log x)^{100}}.
\end{eqnarray*}
Similarly, we get
\begin{eqnarray*}
(VI)&\ll&x(\log x)^2\mu^{-3}\ll x(\log x)^{-200},  \\
(VII)&\ll&x^{\frac14}\log x \cdot\nu^{\frac34}\ll x^\half.
\end{eqnarray*}
Consequently, the sum $(IV)$ is an error term. This completes the proof of the theorem.
\end{proof}

\section{Distribution Result}

In this section, we assume that $a, b$ and $c $ are coprime positive integers
such that $a^2+b^2=2c^2$ and the  dimension of the $2$-Selmer group of $E$  is
two.  Let $q_1, \cdots,q_{k'}$ be all the prime divisors of $abc$. Denote by $k$   a
fixed positive integer.

Let $n=p_1\cdots p_k\in \sQ_k(x)$ with $p_1<\cdots<p_k$. By Theorem \ref{thm-ab odd square-p 1 mod 4-introd}, $n$ lies in $\sP_k(x)$ if and only if $h_4(n)=1$ and $h_8(n)\equiv\frac{d-1}4\pmod2$. Here $d$ is a certain divisor of $n$. From the proof of Theorem \ref{thm-ab odd square-p 1 mod 4-introd}, the characterization of $n\in\sP_k(x)$ is divided into two cases (according to the rank of $A=A_n$ being $k-1$ or $k-2$).

We first assume that $\rank A=k-2$. Then $h_4(n)=1$ is equivalent to $\fb\not\in\Im A$, where $\fb=\brBig{\ALeg2{p_1},\cdots,\ALeg2{p_k}}^{\rm T}$. Since $\rank A=k-2$ and $Az_0=0$ with $z_0=(1,\cdots,1)^{\rm T}\in\bF_2^k$,  there is a unique column vector $z=(z_1,\cdots,z_k)^{\rm T}\not=0, z_0$  with $z_1=1$ such that $Az=0$. Then $d=\prod_{l=1}^k p_l^{z_l}$ is congruent to $5$ modulo $8$. Jung-Yue \cite{yueJung2011eightrank} (Theorem 3.3 (ii)) showed that in this case $h_8(n)=1$ is equivalent to
\begin{equation}\label{dd'-4residue symbol}
\Leg d{d'}_4\Leg{d'}d_4=-1,
\end{equation}
where $d'=\prod_{l=1}^kp_l^{1-z_l}$.

Now we assume that $\rank A=k-1$. Then $h_4(n)=1$ and $\fb\in\Im A$ with $\fb=\brBig{\ALeg2{p_1},\cdots,\ALeg2{p_k}}^{\rm T}$.  Let $z=(z_1,\cdots,z_k)^{\rm T}$ be a column vector with $Az=\fb$. Then $d=\prod_{l=1}^{k}p_l^{z_l}$. Jung-Yue \cite{yueJung2011eightrank} (Theorem 3.3 (iii) and (iv)) proved that $h_8(n)=1$ if and only if $\Leg{2d}{d'}_4\Leg{2d'}d_4=(-1)^{\frac{n-1}8}$. Here $d'=\prod_{l=1}^kp_l^{1-z_l}$.\\

Now we  begin to prove Theorem \ref{thm-dist}.\\
\emph{Proof of Theorem \ref{thm-dist}.} Like Theorem \ref{thm-ab odd square-p 1 mod 4-introd}, we also divide the proof into two cases.

First, we count the number $N_1(x)$ of those $n\in\sP_k(x)$ with $\rank A_n=k-2$. Obviously, we have $k\ge2$ in this case. We first introduce some notation. Let $\sB$ be the set of $k\times k$ symmetric matrices $B$ over $\bF_2$ with $\rank B=k-2$ and $Bz_0=0$, where $z_0=(1,\cdots,1)^{\rm T}\in \bF_2^k$. We define $\sI$ to be all the $\alpha=(\alpha_1,\cdots,\alpha_k)$ with $\prod_{l=1}^k \alpha_l\equiv1\pmod8$ and   $\alpha_l\in\Brlr{ 1, 5, 9, 13 }$ for $1\le l\le k$.
Given $B\in\sB$, we denote by $\sI_B$ the set of $\alpha\in\sI$ such that $\fb_\alpha$ does not  lie in the image of $B$. Here $\fb_\alpha=\brBig{\ALeg{2}{\alpha_1},\cdots,\ALeg2{\alpha_k}}^{\rm T}$. Note that $\fb_\alpha^{\rm T} z_0=0$ by $\prod_{l=1}^k \alpha_l\equiv1\pmod8$.
For any $B\in \sB$ and $\alpha\in\sI_B$,  by (\ref{dd'-4residue symbol}) those $n=p_1\cdots p_k\in \sP_k(x)$ satisfying
\begin{itemize}
\item $p_1<\cdots<p_k$ and $A_n=B$,
\item $\Leg{p_l}{q_j}=1$ for all $1\le l\le k$ and $1\le j\le k'$, and
\item $p_l\equiv\alpha_l\pmod{16}$ for all $1\le l\le k$
\end{itemize}
consist the set $C_k(x,\alpha,B)$. Moreover, given $B\in\sB$ and $\alpha\in\sI-\sI_B$,  the intersection of $C_k(x,\alpha,B)$  and $\sP_k(x)$ is empty. Therefore, the number $N_1(x)$ of those $n\in\sP_k(x)$ with $\rank A_n=k-2$ is
\begin{equation}\label{num-N1(x)}
N_1(x)=\sum_{B\in\sB}\sum_{\alpha\in\sI_B}\#C_k(x,\alpha,B)\sim 2^{-3k-kk'-1-\binom k2}\cdot\#C_k(x)\cdot\sum_{B\in\sB}\#\sI_B.
\end{equation}
Here we have used Theorem \ref{thm-independence}.

Now we count the number of $\sI_B$ with $B$ given. First, given $\fb=(b_1,\cdots,b_k)^{\rm T}$ with $\fb\not\in\Im B$ and $\fb^{\rm T}z_0=0$, we  count the number of $\alpha$ such that $\fb=\fb_\alpha$. As $\fb=\fb_\alpha$, we get $\ALeg2{\alpha_l}=b_l$ for all $1\le l\le k$. So any $\alpha_l$ has exactly two choices. Thus the number of $\alpha$ such that $\fb=\fb_\alpha$ is $2^k$.
Next, we count the number of column vectors $\fb$ such that $\fb^{\rm T}z_0=0$ and $\rank \left(
         \begin{array}{cc}
           B & \fb \\
         \end{array}
       \right)=k-1
$. Since $  \left(
         \begin{array}{cc}
           B & \fb \\
         \end{array}
       \right)z_0=0$ and $B$ is symmetric, we get
$\rank B'=k-2$ and $\rank \left(
         \begin{array}{cc}
           B' & \fb' \\
         \end{array}
       \right)=k-1.$ Here $B'$ is the matrix obtained from $B$ by deleting its
last row and column, and $\fb'$ is the vector obtained from $\fb$ by deleting its last component. Thus $\fb'$ does not lie in the image of $B'$. So there are $2^{k-2} $ many such $\fb'$ and $\fb$. Consequently, $\#\sI_B=2^{2k-2}$. Then (\ref{num-N1(x)}) implies that\[N_1(x)\sim 2^{-k-k'k-3-\binom k2}\cdot\#C_k(x)\cdot\#\sB.\]
The number of $\sB$ can be obtained from the following result of Brown et al \cite{brown2006trivial}.
\begin{prop}
For positive integers $r\le k$, we denote by $\sB_{k,r}$ the set of $k\times k$ symmetric matrices over $\bF_2$ with rank $r$. Then
\[\#\sB_{k,r}=u_{r+1} 2^{\binom{r+1}2}\cdot
\prod_{l=0}^{k-r-1}\frac{2^k-2^l}{2^{k-r}-2^l}\]
with $u_l$ defined above Theorem \ref{thm-dist}.
\end{prop}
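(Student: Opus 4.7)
My plan is to factor the count $\#\sB_{k,r}$ into a ``base'' (subspaces of $\bF_2^k$) and a ``fibre'' (non-degenerate symmetric forms over $\bF_2$), then evaluate each piece. First, identify every $B\in\sB_{k,r}$ with the symmetric bilinear form $b_B(x,y)=x^{\rm T}By$ on $V=\bF_2^k$, whose radical $\ker B$ has dimension $k-r$. The induced form $\bar b(x+\ker B,y+\ker B):=b_B(x,y)$ on $V/\ker B$ gives a bijection
\[
\sB_{k,r}\;\longleftrightarrow\;\Brlr{(W,\bar b)\;:\;\dim W=k-r,\;\bar b\text{ a non-degenerate symmetric form on }V/W},
\]
non-degeneracy on the quotient being equivalent to $W=\ker B$. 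As the number of non-degenerate symmetric forms on $V/W$ depends only on $\dim(V/W)=r$, one obtains $\#\sB_{k,r}=\binom{k}{k-r}_{2}\cdot N(r)$, where $N(r)$ is the number of invertible symmetric $r\times r$ matrices over $\bF_2$.

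For the base factor, the standard orbit count (ordered $(k-r)$-tuples of linearly independent vectors in $\bF_2^k$ modulo the action of $\GL_{k-r}(\bF_2)$) gives
\[
\binom{k}{k-r}_{2}=\prod_{l=0}^{k-r-1}\frac{2^k-2^l}{2^{k-r}-2^l},
\]
which is precisely the product appearing in the announced formula.

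For the fibre factor, I would prove $N(r)=u_{r+1}\cdot 2^{\binom{r+1}{2}}$ by induction on $r$ via the block decomposition
\[
M=\Ma{a}{v^{\rm T}}{v}{M'},\qquad a\in\bF_2,\;v\in\bF_2^{r-1},\;M'\text{ symmetric }(r-1)\times(r-1).
\]
When $a=1$, elementary row-and-column operations reduce $M$ by congruence to $\diag\brlr{1,\,M'+vv^{\rm T}}$, so invertibility of $M$ becomes invertibility of $M'+vv^{\rm T}$; this contributes $2^{r-1}\,N(r-1)$ matrices. When $a=0$, invertibility forces $v\ne 0$, and a change of basis in $\bF_2^{r-1}$ sending $v$ to the first standard vector congruence-reduces $M$ to $\Ma{0}{1}{1}{*}$ block-diagonally combined with an unconstrained symmetric $(r-2)\times(r-2)$ tail, whose invertibility is the only remaining constraint; this case contributes $(2^{r-1}-1)\cdot 2^{r-1}\,N(r-2)$ matrices. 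The resulting recurrence
\[
N(r)=2^{r-1}\,N(r-1)+2^{r-1}(2^{r-1}-1)\,N(r-2),\qquad N(0)=N(1)=1,
\]
is routinely verified to be solved by $u_{r+1}\cdot 2^{\binom{r+1}{2}}$ directly from the product definition of $u_\ell$.

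The main obstacle is the third step: over $\bF_2$ a symmetric bilinear form is not diagonalisable in general (an isotropic vector with $b(v,v)=0$ forces a hyperbolic $2\times 2$ block to appear), so one must treat the two cases $a=1$ and $a=0,\,v\ne 0$ separately and check carefully that the residual $(r-2)\times(r-2)$ block in the hyperbolic reduction is a free symmetric parameter. Once this recurrence is in hand, multiplying $N(r)$ by the Gaussian binomial yields the stated count, which is then fed into the distribution argument for Theorem \ref{thm-dist}.
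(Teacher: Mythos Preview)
The paper does not prove this proposition; it simply quotes it from Brown et al.\ \cite{brown2006trivial} and applies it. So there is no ``paper's own proof'' to compare against.

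Your argument is correct and is in fact the standard one. The factorisation $\#\sB_{k,r}=\binom{k}{k-r}_{2}\cdot N(r)$ via the radical is clean, and the recurrence
\[
N(r)=2^{r-1}N(r-1)+2^{r-1}(2^{r-1}-1)N(r-2)
\]
does hold and is solved by $u_{r+1}2^{\binom{r+1}{2}}$ (one checks the two parities of $r$ separately using $u_{2m}=u_{2m+1}$ and $u_{2m}=u_{2m-1}(1-2^{1-2m})$).

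One point of exposition to tighten in the $a=0$ case: after sending $v$ to $e_1$ you land at
\[
\begin{pmatrix} 0 & 1 & 0\\ 1 & c & w^{\rm T}\\ 0 & w & N\end{pmatrix},
\]
which is \emph{not yet} block-diagonal; a second congruence step (using the invertible $2\times 2$ corner to clear $w$) brings it to $\begin{pmatrix}0&1\\1&c\end{pmatrix}\oplus N$. The factor $2^{r-1}$ in your count is really $2\cdot 2^{r-2}$ for the free parameters $(c,w)$ in the intermediate form, not for anything in the final block-diagonal shape. Your phrase ``unconstrained symmetric tail, whose invertibility is the only remaining constraint'' is slightly self-contradictory; just say that $(c,w)$ range freely and $N$ must be invertible. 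With that clarification the count is transparent.
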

Note that the map sending $B$ to $B'$ induces a bijection between $\sB$ and $\sB_{k-1,k-2}$. So
\[\#\sB=u_{k-1}2^{\binom{k-1}2}\cdot(2^{k-1}-1).\]
We get
\[N_1(x)\sim 2^{-k-k'k-3}(1-2^{1-k})u_{k-1}\cdot\#C_k(x).\]

Finally, we counts the number $N_2(x)$ of $n\in\sP_k(x)$ with $\rank A_n=k-1$. Like above (we refer to our previous paper \cite{wang2016-3} for detailed proof), we get \[N_2(x)\sim 2^{-k-k'k-2}u_k\cdot\#C_k(x).\]
This finishes the proof of the theorem.
\qed\\

\textbf{Acknowledgements} 
This work is supported by the Fundamental Research Funds for the Central Universities (Grant No. GK201703004). The author is greatly indebted to his advisor Professor Ye Tian for many instructions and suggestions.

\noindent Zhangjie Wang, \\
School of Mathematics and Information Science, \\
Shaanxi Normal University, Xi'an 710119,China. \\
{\it {zhangjiewang@snnu.edu.cn}  }



\end{CJK}
\end{document}